\newtheorem{theorem}{Theorem}[section]
\newtheorem{cor}[theorem]{Corollary}
\newtheorem{lem}[theorem]{Lemma}
\newtheorem{prop}[theorem]{Proposition}
\theoremstyle{definition}
\newtheorem{rem}[theorem]{Remark}
\newtheorem{example}[theorem]{Example}
\numberwithin{equation}{section}
\newcommand{\E}{\mathbb{E}}
\newcommand{\R}{\mathbb{R}}
\newcommand{\cO}{\mathcal{O}}
\begin{document}

\title{Best finite constrained approximations\\of one-dimensional probabilities}

 \author{Chuang Xu\\Department of Mathematical
   Sciences\\University of Copenhagen\\Copenhagen, {\sc
     Denmark}\\[2mm] and \\[2mm]
Arno Berger\\Mathematical and Statistical
   Sciences\\University of Alberta\\Edmonton, Alberta, {\sc
     Canada}}

\maketitle

\begin{abstract}
\noindent
This paper studies best finitely supported approximations of
one-dimensional probability measures with respect to the
$L^r$-Kantorovich (or transport) distance, where either the
locations or the weights of the approximations' atoms are
prescribed. Necessary and sufficient optimality conditions are
established, and the rate of convergence (as the number of atoms goes
to infinity) is discussed. In view of emerging mathematical and
statistical applications, special attention is given to the case of
best uniform approximations (i.e., all atoms having equal weight). The
approach developed in this paper is elementary; it is based on best 
approximations of (monotone) $L^r$-functions by step functions, 
and thus different from, yet naturally complementary to, the classical 
Voronoi partition approach.
\end{abstract}

\noindent
\hspace*{8.3mm}{\small {\bf Keywords.} Constrained approximation, 
best uniform approximation, asymptotically best}\\ 
\hspace*{26.2mm} {\small  approximation, Kantorovich distance, balanced function, quantile function.}

\noindent
\hspace*{8.3mm}{\small {\bf MSC2010.} 28A33, 60B10, 62E17.}

\section{Introduction}\label{sec1}

Finding best finitely supported approximations of a given (Borel)
probability measure $\mu$ on $\R$ is an important basic problem
that has been studied extensively and from several
perspectives. Assuming for instance that $\int_{\R} |x|^r {\rm
  d}\mu(x)<+\infty$ for some $r\ge 1$, a classical question asks to
minimize the $L^r$-Kantorovich (or transport) distance $d_r(\nu, \mu)$ over all discrete probabilities
$\nu$ supported on at most $n$ atoms, where $n$ is a given positive integer. A rich theory of {\em
  quantization of probability measures\/} addresses this question, as
well as applications thereof in such diverse
fields as information theory, numerical integration, and optimal
transport, among others; see, e.g., \cite{BJM, GL,P} and the many
references therein. As is well known, a minimal value
of $d_r(\nu, \mu)$ always is attained for some discrete probability
$\nu = \delta_{\bullet}^{\bullet,n}$ which may or may not be
determined uniquely by this minimality property. Moreover, $d_r
(\delta_{\bullet}^{\bullet,n}, \mu )\to 0$ as $n\to \infty$, and
the precise rate of convergence has attracted
particular interest. A celebrated theorem (see Proposition \ref{Zador}
below) asserts that, under a mild moment condition,
$\bigl(n d_r(\delta_{\bullet}^{\bullet,n}, \mu ) \bigr)$ converges to a
finite positive limit whenever $\mu$ is non-singular (w.r.t.\ Lebesgue
measure). Results in a similar spirit have been
established for important classes of singular measures, notably
self-similar and -conformal probabilities \cite{GL1,KZ,PB1}. While these
classical results crucially employ Voronoi partitions (as
developed in some detail, e.g., in \cite{GL}), alternative tools and
extensions to other metrics have recently been studied also
\cite{BJM,CGI,DV}.

A second important perspective on the approximation problem is that of
{\em random empirical quantization\/} \cite{BG,DSS}. To illustrate it, let $(X_j)_{j\ge
  1}$ be an iid.\ sequence of random variables with common law $\mu$,
and consider the (random) empirical measure $\mu_n = \frac1{n}
\sum_{j=1}^n \delta_{X_j}$; here and throughout, $\delta_a$ is a Dirac
unit mass at $a\in \R$. Then $d_r (\mu_n,
\mu )\to 0$ with probability one as $n\to \infty$, as well as $\E
d_r (\mu_n, \mu ) \to  0$. A comprehensive analysis of the rate of
convergence of $\bigl( \E d_r  (\mu_n , \mu ) \bigr)$ is provided by
the recent monograph \cite{BL} which, in particular, identifies
necessary and sufficient conditions for decay to occur at the
``standard rate'' $ (n^{-1/2} )$, that is, for $ \bigl( n^{1/2} \E
d_r ( \mu_n , \mu ) \bigr)$ to be bounded above and below by finite
positive constants. Beyond these one-dimensional
results, rates of convergence for random empirical quantization have
lately been studied in higher dimensions and other settings also; see,
e.g., \cite{BG,DSS,FG}.

The purpose of the present article is to develop a third perspective
on the approximation problem that in a sense lies between the two
established perspectives briefly recalled above. Specifically, we
present an in-depth study of finitely supported approximations that
are {\em non-random\/} yet {\em constrained\/} in that either the
locations or the weights of the approximations' atoms are
prescribed. To the best of our knowledge, such approximations have
not been studied systematically in the literature, though the recent
papers \cite{Ba} and \cite{BJM} do consider (uniform)
``$\mathcal{U}$-quantization'' and discrete approximations of
absolutely continuous probabilities $\mu$, respectively.
The necessary and sufficient conditions for best constrained
approximations presented in this article make no assumptions on $\mu$ beyond $\int_{\R} |x|^r {\rm
  d}\mu(x)<+\infty$. They follow rather directly from elementary
properties of monotone functions and exploit a certain duality between
locations and weights of atoms. By contrast, note that Voronoi partitions
are typically much less useful if weights, rather than positions, are
prescribed \cite{GL}.

Arguably the simplest special case where our
results apply is that of best {\em uniform\/} approximations: Given
$\mu$ and a positive integer $n$, for which $\nu = \frac1{n}
\sum_{j=1}^n \delta_{x_{j}}$ is $d_r  (\nu, \mu )$ minimal, where $x_{1},
\cdots ,  x_{n}\in \R$? Theorem \ref{th1} below characterizes the (often
unique) minimizer $\delta_{\bullet}^{{\bf u}_n}$; here usage of the
superscript ${\bf u}_n$ emphasizes the fact that a {\em best uniform
  approximation\/} $\delta_{\bullet}^{{\bf u}_n}$ typically is quite
different from any {\em best approximation\/}
$\delta_{\bullet}^{\bullet, n}$. The special case of best uniform
approximations is of considerable interest in itself: In statistics, when dealing with
empirical data sets, practical considerations may
demand that all atoms have equal weights, or at least that they be integer
multiples of one fixed unit weight \cite{BHM}. Also, best uniform
approximations are close analogues of {\em support points\/}
\cite{MJ}, the latter being minimizers relative to a slightly
different metric (energy distance). One may thus view
$\delta_{\bullet}^{{\bf u}_n}$ as a quasi Monte Carlo (MC) tool that
minimizes the integration error bound $|\int f \, {\rm d}\mu - \sum_{j=1}^n
f(x_{j})|\le \mbox{\rm Lip}\,  (f) \, d_1 (\delta_{\bullet}^{{\bf u}_n}, \mu)$ for a wide class of functions (cf.\ \cite{DP}), and
consequently a careful analysis of $d_1(\delta_{\bullet}^{{\bf u}_n}, \mu)$ as
$n\to \infty$ is indispensable. In computational
mathematics, best uniform approximations also arise naturally in the
form of restricted MC methods, and a basic question is how
performance of the latter compares to general (non-restricted) MC
methods, that is, how $d_r (\delta_{\bullet}^{{\bf u}_n}, \mu)$
compares to $d_r (\delta_{\bullet}^{\bullet, n}, \mu)$ as $n\to
\infty$; see, e.g., \cite{GHMR, G18} and the many references
therein. Restricted MC methods have recently found applications in
``big-data'' problems in Bayesian statistics \cite{MJ} and the
numerical solution of SDE, notably in mathematical finance \cite{GHMR,
  Glass, PPP}.

Just as for the best unconstrained and the random empirical approximations mentioned
earlier, $d_r (\delta_{\bullet}^{{\bf u}_n}, \mu)\to 0$ as $n\to \infty$,
which again makes the rate of convergence a natural object of
study. Presented in Subsection \ref{subwei}, our results in this regard are
quite similar to those of \cite{BL}, despite their obviously different
context. As a simple illustrative example, consider the
standard normal distribution, i.e., let $\frac{ {\rm d }}{{\rm
    d}x}\mu(\, ] -\infty , x]
  )=\frac{1}{\sqrt{2\pi}}e^{-x^2/2}$ for all $x\in\R$. From Proposition \ref{Zador} below, it follows that in
the case of best (unconstrained) approximations, for all $r\ge 1$,
$$
d_r(\delta_{\bullet}^{\bullet, n}, \mu) = \cO ( n^{-1} ) \quad \mbox{\rm as
} n \to \infty \, ,
$$
whereas for random empirical approximations, \cite[Sec.6.5]{BL}
shows that
$$
\E d_r (\mu_n , \mu ) = \left\{
\begin{array}{ll}
\cO ( n^{-1/2}) & \mbox{\rm if } 1\le r < 2 \, , \\[1mm]
\cO ( n^{-1/2} (\log\log n)^{1/2} ) & \mbox{\rm if } r= 2  \, , \\[1mm]
\cO ( n^{-1/r}(\log n)^{1/2})  & \mbox{\rm if } r> 2 \, .
\end{array}
\right.
$$
By contrast, for best uniform approximations, with $r=2$ and
along the subsequence $n=2^k$, the sharp rate of convergence for
$\bigl(d_r(\delta_{\bullet}^{{\bf u}_n},\mu)\bigr)$ is $(n\log n)^{-1/2}$,
as proved by \cite{GHMR}. Utilizing the main results of the present
article, notably Theorem \ref{th1} below, one can
show that in fact
$$
d_r (\delta_{\bullet}^{{\bf u}_n} , \mu ) = \left\{
\begin{array}{ll}
\cO ( n^{-1 } (\log n)^{1/2})  & \mbox{\rm if } r= 1  \, , \\[1mm]
\cO ( n^{-1/r}(\log n)^{-1/2})  & \mbox{\rm if } r> 1 \, .
\end{array}
\right.
$$
Not too surprisingly, the rate of convergence of $\bigl(
d_r (\delta_{\bullet}^{{\bf u}_n} , \mu ) \bigr)$ is slower than that of
$\bigl( d_r (\delta_{\bullet}^{\bullet, n} , \mu ) \bigr)$, but
faster than that of $\bigl( \E d_r (\mu_n , \mu )\bigr)$.

Due to the nature of the underlying approximation problem for monotone functions,
our approach is not restricted to $d_r$, and results in a similar
spirit can be established for other important
metrics and for discrete approximations with countable support. 
One-dimensionality, on the other hand, is crucial: In
multi-dimensional (Euclidean) spaces, upper bounds for the rate of
decay of best uniform approximations have been established only
recently \cite{C}, via a {\em uniform decomposition approach}. In
addition, we mention \cite{GHMR} which analyzes a best uniform approximation
problem (referred to as {\em random bit quadrature}) in a Hilbert
space setting with $L^2$-Kantorovich metric, motivated also by MC
applications.

This article is organized as follows. Section \ref{No} introduces the
notations used throughout, and recalls definition and basic
properties of the metric $d_r$ for the reader's convenience. Section
\ref{Mono} reviews several elementary facts about monotone functions and
their quantile and growth sets, as well as the notion of a balanced
function, to be used subsequently in Section \ref{Lr} to characterize best
approximations of (monotone) $L^r$-functions by step functions. While
they may also be of independent interest, these results crucially
serve as tools in Section \ref{sec5}, the main part of this work. In that
section, necessary and sufficient conditions for best finite
approximations with prescribed locations (Subsection \ref{subloc}) or
weights (Subsection \ref{subwei}) are established. Much attention
is devoted to the special case of best uniform approximations
$\delta_{\bullet}^{{\bf u}_n}$, and in particular to the rate of convergence
of $\bigl( d_r (\delta_{\bullet}^{{\bf u}_n}, \mu)\bigr)$. Convergence
theorems and finite range (upper and lower) bounds for
such sequences are provided. All results are illustrated via simple
examples of $\mu$ which include absolutely continuous
(exponential, Beta) as well as singular (Cantor, inverse Cantor)
probability measures.

\section{Notations}\label{No}
The following, mostly standard notations are used throughout. The
natural and real numbers are denoted $\mathbb{N}$ and $\mathbb{R}$,
respectively. The extended real numbers are
$\overline{\mathbb{R}}=\mathbb{R}\cup\{-\infty,+\infty\}$. 
For any $a\in\overline{\mathbb{R}}$, ${\rm sgn}\ a=1$ if $a>0$, ${\rm
  sgn}\ 0=0,$  and ${\rm sgn}\ a=-1$ if $a<0$. The indicator function
of any set $A\subset\overline{\mathbb{R}}$ is denoted ${\bf 1}_A$, and $\log$ symbolizes the natural logarithm.
For $x\in\mathbb{R}$, let $|x|$ be the absolute value, $\lfloor
x\rfloor$ the floor (i.e., the largest integer $\le x$), and $\langle
x\rangle =x-\lfloor x\rfloor$ the fractional part of $x$,
respectively. Lebesgue measure on $\overline{\mathbb{R}}$ is
symbolized by $\lambda$, and $\delta_a$ stands for the Dirac measure
concentrated at $a$, i.e., $\delta_a(A)={\bf 1}_{A}(a)$ for all $A$.

The usual notations for intervals, e.g., $[a,b[\ =
\{x\in\overline{\mathbb{R}}: a\le x<b \}$ are used. Endowed with the
topology $ \{[-\infty,a[\ \cup\ U\ \cup\ ]b,+\infty]:
a,b\in\overline{\mathbb{R}},\ U\subset \mathbb{R}\ \text{open} \}$,
the space $\overline{\mathbb{R}}$ is compact and homeomorphic to the
unit interval $\mathbb{I}=[0,1]$. Throughout,
$I\subset\overline{\mathbb{R}}$ always denotes a closed (and hence
compact) interval that is non-degenerate, i.e., $\lambda(I)>0$.
For $A\subset\overline{\mathbb{R}}$, denote by $\#\ A,$
$\overset{\circ}{A}$, and $\overline{A}$ the cardinality (number of
elements), interior, and closure of $A$, respectively. 
Every non-empty $A$ has an infimum $\inf A$ and a supremum $\sup A$; 
if $A$ is closed, then $\inf A=\min A$ and $\sup A=\max A$. 
If $A\subset\overline{\mathbb{R}}$ is an interval and $f:
A\to\overline{\mathbb{R}}$ is monotone, then
$f(a-)=\lim_{\varepsilon\downarrow0}f(a-\varepsilon)$ and
$f(a+)=\lim_{\varepsilon\downarrow0}f(a+\varepsilon)$ both exist for
every $a\in\overset{\circ}{A}$. For any set
$A\subset\overline{\mathbb{R}}$ and any function $f:
A\rightarrow\overline{\mathbb{R}}$, the image of $A$ under $f$ is
$f(A)= \{f(a): a\in A \}$, while the pre-image of
$B\subset\overline{\mathbb{R}}$ is $f^{-1}(B)= \{a\in A: f(a)\in B
\}$. Also, for every $b\in\overline{\mathbb{R}}$, 
let $ \{f\le b \}=f^{-1} ([-\infty,b] )$; the sets $ \{f\ge b \}$, $
\{f<b \}$, $ \{f>b \}$, and $ \{f=b \}$ are defined analogously.
Denote by ${\rm essinf}_A f$ and ${\rm esssup}_A f$  the essential
infimum and supremum of $f$ on $A$, respectively. For $1\le r<+\infty$ 
and any (closed, non-degenerate) interval $I\subset\overline{\mathbb{R}}$, let
$L^r(I)$ be the space of all measurable functions
$f:I\to\overline{\mathbb{R}}$ that are (absolutely) $r$-integrable
with respect to $\lambda,$ and $L^{\infty}(I)$ the space of all
functions bounded $\lambda$-almost everywhere (a.e.). For  $f\in
L^r(I)$, let $f^+=\max \{f,0 \}$ and $f^-= (-f )^+$, hence
$f=f^+-f^-$.

Let $\mathcal{P}$ be the family of all Borel probability measures on 
$\overline{\mathbb{R}}$ with $\mu(\mathbb{R})=1$. For every
$\mu\in\mathcal{P}$,  $F_{\mu}:\overline{\mathbb{R}}\to\mathbb{I}$
with $F_{\mu}(x)=\mu ([-\infty,x] )$ is the associated {\em
  distribution function}, 
$F_{\mu}^{-1}$ the associated {\em (upper) quantile function}, i.e., 
\begin{equation} \label{00}
F_{\mu}^{-1}(t)=\sup \{F_{\mu}\le t \},\quad  \forall\ t\in]0,1[\, ,
\end{equation} 
and ${\rm supp}\ \mu$ the {\em support} of $\mu$, 
that is, the smallest closed set of $\mu$-measure $1$. 
Both $F_{\mu}$ and $F^{-1}_{\mu}$ are non-decreasing and
right-continuous. As a consequence, $F_{\mu}^{-1}$ generates a 
positive Borel measure $\mu^{-1}$ on $]0,1[$ via 
$$
\mu^{-1} (]t,u] )=F_{\mu}^{-1}(u)-F_{\mu}^{-1}(t),\quad  \forall 0<t<u<1\, .
$$ 
Note that $\mu^{-1}$, referred to as the {\em inverse measure} of
$\mu$, is finite if and only if ${\rm supp}\ \mu$ is bounded, since 
in fact $\mu^{-1} (]0,1[ )=\max {\rm supp}\ \mu-\min {\rm supp}\ \mu$; 
see, e.g., \cite[App.A]{BL} for further basic properties of inverse measures.

For every $r\ge1,$ the set of probability measures with finite $r$-th
moment is denoted $\mathcal{P}_r$, i.e., 
$\mathcal{P}_r= \bigl \{\mu\in\mathcal{P}:  \int_\mathbb{R}|x|^r{\rm
  d}\mu(x)<+\infty \bigr\}$. 
Thus $\mu\in\mathcal{P}_r$ if and only if $F_{\mu}^{-1}\in
L^r(\mathbb{I})$. On $\mathcal{P}_r$, the {\em $L^r$-Kantorovich
  distance} $d_r$ is 
\begin{equation}\label{0}
   d_r(\mu,\nu)=\left(\int_{\mathbb{I}}\left|F_{\mu}^{-1}(t)-F_{\nu}^{-1}(t)
   \right|^r{\rm
     d}t\right)^{1/r}=\left\|F_{\mu}^{-1}-F_{\nu}^{-1}\right\|_r,\quad
 \forall \mu,\nu \in\mathcal{P}_r\, .
 \end{equation} 
For $r=1$, by Fubini's
 theorem,
$$
d_1(\mu,\nu)=\int_{\mathbb{R}}\left|F_{\mu}(x)-F_{\nu}(x)\right|{\rm
   d}x,\ \quad \forall\ \mu,\nu\in\mathcal{P}_1\, .
$$
When endowed with the metric $d_r$, the space $\mathcal{P}_r$ is separable and
 complete, and $d_r(\mu_n,\mu) \to 0$ implies that  $\mu_n\to\mu$
 weakly. Note that $\mathcal{P}_r\supset\mathcal{P}_s$ and $d_r\le
 d_s$ whenever $r< s$. On $\mathcal{P}_s$, the metrics $d_r$ and
 $d_s$ are not equivalent, as the example of
 $\mu_n=(1-n^{-s})\delta_0+n^{-s}\delta_n$ shows, for which
 $d_s(\mu_n,\delta_0)\equiv1$, and yet
 $\lim_{n\to\infty}d_r(\mu_n,\delta_0)=0$ for all $r<s$, and hence
 $\mu_n\to\delta_0$ weakly. The reader is referred to \cite{D,RR} for
 details on the mathematical background of the Kantorovich distance,
 and to \cite{GL,RR} for a discussion of its usefulness in the study of mass
 transportation and quantization problems.

\section{Monotone and balanced functions and their inverses}\label{Mono}

Quantization, as informally alluded to in the Introduction, may be
understood as the approximation of a given probability measure by 
finite weighted sums of point masses. Every quantile function is
non-decreasing; in particular, the quantile function associated with 
a finitely supported probability measure is a monotone step function. 
Therefore, it is natural---not least in view of \eqref{0}---to
formulate the ensuing approximation problem more generally as a 
problem about the best approximation of monotone $L^r$-functions by
step functions. Towards this goal, we first present some relevant
properties of monotone functions. For ease of exposition, the focus is 
on non-{\em decreasing} functions, but all subsequent arguments hold 
analogously for non-increasing functions as well.

Given an interval $I\subset\overline{\mathbb{R}}$ and a non-decreasing
function $f: I\to \overline{\mathbb{R}},$ define the
$t$-\textit{quantile set\/} $Q_t^f$ of $f$ as
$$
Q_t^f= [\inf \{f\ge t \},\sup \{f\le t \} ],\quad  \forall
t\in\overline{\mathbb{R}} \, ;
$$
here and throughout, $\inf\varnothing:=\max I$ and
$\sup\varnothing:=\min I$. Also remember that $I$ is closed and non-degenerate, by convention.
As a generalization of \eqref{00}, the {\em (upper) inverse} function $f^{-1}: \overline{\mathbb{R}}\to\overline{\mathbb{R}}$ associated with $f$ is
$$
f^{-1}(t):=\sup \{f\le t \}=\max Q_t^f,\quad
\forall t \in \overline{\mathbb{R}}\, . 
$$
Note that $f^{-1}$ is non-decreasing, right-continuous and, on $f(I)$, 
coincides with the ordinary inverse of $f$ whenever $f$ is
one-to-one. Moreover, $ (f^{-1} )^{-1} (x)=f(x+)$ for all
$x\in\overset{\circ}{I}$; in particular, therefore, $ (f^{-1} )^{-1}$
equals $f$ a.e. on $\overset{\circ}{I}$, and in fact everywhere if $f$
is right-continuous. A few elementary properties of quantile sets are as follows.

\begin{prop}\label{le1}{\rm \cite[Lem.~2.7]{BHM}}.
Let $f: I\to\overline{\mathbb{R}}$ be non-decreasing. Then, for every
$t\in\overline{\mathbb{R}}$, the set $Q_t^f$ is a non-empty, 
compact (possibly one-point) subinterval of $I,$ and $f(x)=t$ whenever
$\min Q_t^f<x<\max Q^f_t$; in particular, $Q_t^f $ equals $ \overline{ \{ f =
  t\}}$ whenever the latter set is non-empty. Moreover, the following
hold:
\begin{enumerate}
\item If $t<u$ then $x\le y$ for every $x\in Q_t^f$ and every $y\in Q_u^f,$  and the set $Q_t^f\cap Q_u^f$ contains at most one point.
\item For every $x\in I$ and $t\in\overline{\mathbb{R}}$, $x\in Q_t^f$
  if and only if $t\in Q_x^{f^{-1}}$.
\end{enumerate}
\end{prop}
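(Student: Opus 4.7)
The plan is to set $\alpha_t = \inf\{f \ge t\}$ and $\beta_t = \sup\{f \le t\}$, so that by definition $Q_t^f = [\alpha_t, \beta_t]$, and to work throughout from one structural fact: when $f$ is non-decreasing, $\{f \ge t\}$ is a right-tail sub-interval of $I$ (either $[\alpha_t, \max I]$ or $\,]\alpha_t, \max I]$, possibly empty), and symmetrically $\{f \le t\}$ is a left-tail sub-interval. Given this, everything else reduces to bookkeeping about one-sided limits.

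First I would verify $\alpha_t \le \beta_t$, so that $Q_t^f$ is a non-empty compact sub-interval of $I$. If one of the two sets is empty, the conventions $\inf \varnothing = \max I$, $\sup \varnothing = \min I$ together with monotonicity force $\alpha_t = \beta_t$. Otherwise, assume for contradiction $\alpha_t > \beta_t$ and pick $z \in \,]\beta_t, \alpha_t[\,$: then $z \notin \{f \ge t\}$ gives $f(z) < t$, yet $z > \beta_t$ forbids $z \in \{f \le t\}$, a contradiction. For ``$f(x) = t$ on $\,]\alpha_t, \beta_t[\,$'', the definition of $\alpha_t$ supplies $y_1 < x$ with $f(y_1) \ge t$, whence $f(x) \ge t$ by monotonicity; symmetrically $f(x) \le t$. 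The identity $Q_t^f = \overline{\{f = t\}}$ (when the latter is non-empty) then follows: $\{f = t\} \subset Q_t^f$ and closedness of $Q_t^f$ yield $\overline{\{f = t\}} \subset Q_t^f$, while the previous observation places the open interval $\,]\alpha_t, \beta_t[\,$ inside $\{f = t\}$ if $\alpha_t < \beta_t$; the degenerate case $\alpha_t = \beta_t$ leaves no room, as the non-empty subset $\{f = t\}$ of the singleton $Q_t^f$ must equal it. Part (i) is then a one-line monotonicity argument: if $t < u$, $x \in Q_t^f$, $y \in Q_u^f$ and, for contradiction, $x > y$, then any $z \in \,]y, x[\,$ lies in the left-tail $\{f \le t\}$ (since $z < x \le \beta_t$) \emph{and} in the right-tail $\{f \ge u\}$ (since $z > y \ge \alpha_u$), forcing $u \le f(z) \le t$, absurd; the one-point bound on $Q_t^f \cap Q_u^f$ is the same argument applied with the roles of $x$ and $y$ swapped.

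The main content is (ii), which I would reduce to the characterization
\begin{equation*}
x \in Q_t^f \iff f(x-) \le t \le f(x+)
\end{equation*}
together with the dual identities
\begin{equation*}
\inf\{f^{-1} \ge x\} = f(x-) \quad \text{and} \quad \sup\{f^{-1} \le x\} = f(x+) \, ,
\end{equation*}
whose combination reads $Q_x^{f^{-1}} = [f(x-), f(x+)]$, yielding the equivalence $x \in Q_t^f \iff f(x-) \le t \le f(x+) \iff t \in Q_x^{f^{-1}}$. The main obstacle is establishing the second display: translating the condition ``the left-tail $\{f \le t\}$ reaches up to $x$'' into a clean one-sided inequality on $f$ requires care, since the tails may be either open or closed at their boundary, and one must separately check that $\sup\{f^{-1} \le x\} = f(x+)$ regardless of whether this supremum is attained. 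The boundary cases $x \in \{\min I, \max I\}$, where $f(x-)$ or $f(x+)$ is not \emph{a priori} defined, are handled by the natural conventions $f((\min I)-) = -\infty$ and $f((\max I)+) = +\infty$, under which all formulas remain uniformly valid.
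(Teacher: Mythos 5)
The paper does not prove this proposition at all --- it is quoted from \cite[Lem.~2.7]{BHM} --- so there is no in-text argument to compare against; your proof has to stand on its own, and it does. The preliminary parts (non-emptiness and compactness of $Q_t^f$, constancy of $f$ on its interior, the identification with $\overline{\{f=t\}}$, and part (i)) are handled correctly, including the empty-set conventions $\inf\varnothing=\max I$, $\sup\varnothing=\min I$: in each contradiction argument the degenerate cases collapse exactly as you indicate. For part (ii), the reduction to the two characterizations $x\in Q_t^f\iff f(x-)\le t\le f(x+)$ and $Q_x^{f^{-1}}=[f(x-),f(x+)]$ is the right move, and the two dual identities you flag as the ``main obstacle'' are indeed true: for $t<f(x+)$ one has $f>t$ on $\,]x,\max I]$, hence $f^{-1}(t)=\sup\{f\le t\}\le x$, while for $t>f(x+)$ some $y>x$ satisfies $f(y)\le t$, hence $f^{-1}(t)\ge y>x$; this gives $\sup\{f^{-1}\le x\}=f(x+)$, and $\inf\{f^{-1}\ge x\}=f(x-)$ is symmetric. (Your conventions at the endpoints of $I$ are consistent with the paper's, since $Q_x^{f^{-1}}$ is formed with $f^{-1}$ viewed on all of $\overline{\mathbb{R}}$, so $\min=-\infty$ and $\max=+\infty$ there.) The identity $Q_x^{f^{-1}}=[f(x-),f(x+)]$ you isolate is also exactly what the paper later records in Proposition~\ref{pro0}(iv) via $\ell_f^{-1}(x)=f(x+)$ and $\ell_f^{-1}(x-)=f(x-)$, so your route fits naturally with how the result is used downstream. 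I see no gap; only the two dual identities are left as a sketch rather than written out, and they go through as stated.
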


\noindent
For any non-decreasing function $f: I\to\overline{\mathbb{R}}$, call
$x\in I$ a {\em growth point\/} of $f$ if $f(y)<f(x)$ for all $y\in I$
with $y<x$, or $f(y)>f(x)$ for all $y>x$; see also \cite[p.97]{BL}. 
Define the {\em growth set\/} of $f$ as
$$
G^f= \{x\in I: x\ \text{is\ a\ growth\ point\ of}\ f \}\, .
$$
Thus for example, $G^{F_{\mu}}={\rm supp}\ \mu$ for every
$\mu\in\mathcal{P}$, and $ \{0,1 \}\subset
G^{F_{\mu}^{-1}}\subset\mathbb{I}$. 
An elementary relation between growth and quantile sets follows directly from the definitions.

\begin{prop}\label{pro2}
Let $f: I\to\overline{\mathbb{R}}$ be non-decreasing. Then $G^f$ is a
closed subset of $I$, non-empty unless $f$ is constant, and
$G^f\cup\  \{\min I,\max I \}=\bigcup_{t\in\overline{\mathbb{R}}}  \{\min Q_t^f,\max Q_t^f \}$.
\end{prop}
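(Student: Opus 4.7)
The plan is to prove the three assertions in succession, relying on the following elementary reformulation that follows directly from the definition and the monotonicity of $f$: a point $x$ in the interior of $I$ fails to be a growth point precisely when there exist $y_1, y_2 \in I$ with $y_1 < x < y_2$ and $f(y_1) = f(y_2)$; equivalently, $f$ is constant on some relatively open neighborhood of $x$ in $I$.

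From this equivalence, closedness of $G^f$ is immediate: the complement $I \setminus G^f$ is a union of relatively open subintervals of $I$ on which $f$ is constant, hence is open. For non-emptiness when $f$ is non-constant, I would choose $t \in \overline{\mathbb{R}}$ with $f(\min I) < t < f(\max I)$ and set $x := \inf \{f \ge t\}$, so $\min I < x \le \max I$. Two subcases arise: if $f(x) \ge t$, then $f(y) < t \le f(x)$ for every $y < x$, so condition (a) of the growth-point definition holds; if $f(x) < t$, then necessarily $f(x+) \ge t > f(x)$, so $f(y) > f(x)$ for every $y > x$ and condition (b) holds.

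For the equality $G^f \cup \{\min I, \max I\} = \bigcup_{t \in \overline{\mathbb{R}}} \{\min Q_t^f, \max Q_t^f\}$, I would treat each inclusion separately. For $(\subseteq)$, if $x \in G^f$ satisfies (a) then $x = \inf \{f \ge f(x)\} = \min Q_{f(x)}^f$ and, symmetrically, (b) gives $x = \max Q_{f(x)}^f$; meanwhile, any $t < f(\min I)$ yields $\{f \ge t\} = I$ and $\{f \le t\} = \varnothing$, hence $Q_t^f = \{\min I\}$ via the conventions $\inf \varnothing = \max I$, $\sup \varnothing = \min I$, so $\min I$ (and symmetrically $\max I$) lies in the right-hand side. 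For $(\supseteq)$, given $t$ and $x := \min Q_t^f$ with $x > \min I$, the same two-case argument as in the non-emptiness step places $x$ in $G^f$; if instead $x = \min I$, then $x$ is already in the left-hand side. Symmetric arguments dispose of $\max Q_t^f$.

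The main subtlety throughout is the case $f(x) < t$ at $x = \inf \{f \ge t\}$: here condition (a) may fail because $f$ can be constant just to the left of $x$, and one is forced to exploit the right-hand jump of $f$ at $x$ to trigger condition (b) instead. The extra terms $\{\min I, \max I\}$ on the left of the equality precisely accommodate the quantile sets that degenerate to a single endpoint of $I$ (namely for $t$ outside $f(\overset{\circ}{I})$).
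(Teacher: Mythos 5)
Your proof is correct, and it supplies details that the paper itself omits: Proposition \ref{pro2} is stated there without proof (the text merely says the relation ``follows directly from the definitions''), so yours is the natural direct argument one would write out. Two tiny edge cases are worth tightening, though neither threatens the proof. First, in the non-emptiness step the inequality $\min I<x$ need not hold (e.g.\ $f={\bf 1}_{]0,1]}$ on $I=[0,1]$ with $t=\tfrac12$ gives $x=\inf\{f\ge t\}=0=\min I$), but your two subcases go through verbatim at $x=\min I$, so nothing is lost. Second, since $f$ takes values in $\overline{\mathbb{R}}$, there may be no $t<f(\min I)$ when $f(\min I)=-\infty$; taking $t=-\infty$ (for which $\{f\ge t\}=I$, hence $\min Q_{-\infty}^f=\min I$) covers this, and dually $t=+\infty$ gives $\max Q_{+\infty}^f=\max I$. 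Relatedly, in the inclusion $\supseteq$ the case $\{f\ge t\}=\varnothing$ makes $\min Q_t^f=\max I$ by the paper's convention $\inf\varnothing=\max I$; there your ``right-hand jump'' mechanism is vacuous, and one should simply observe that $\max I$ already lies in the left-hand side, which is exactly what your closing remark about degenerate quantile sets anticipates.
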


Next, we recall a useful terminology from \cite{CM}:
Given a bounded interval $I\subset\mathbb{R},$ call a measurable function $f:\ I\to\overline{\mathbb{R}}$ {\em balanced\/} if
$$
\big|\lambda ( \{f>0 \} )-\lambda ( \{f<0 \} )\big|\le\lambda
 ( \{f=0 \} )\, ,
$$
and denote by $B^f:= \{t\in\mathbb{R}: f-t\ \text{is\ balanced} \}$  the {\em set of all balanced values\/} of $f$.
To establish a few basic properties of $B^f$ (in Lemma~\ref{le5}
below), consider the  auxiliary function
$\ell_f:\overline{\mathbb{R}}\to\overline{\mathbb{R}}$  
given by 
$$\ell_f(t)={\textstyle \frac{1}{2}} \bigl( \min I+\max I+\lambda ( \{f<t \} )-\lambda (
\{f>t \} ) \bigr) \, .
$$
The following properties of $\ell_f$ are straightforward to verify.

\begin{prop}\label{pro0}
Let $I$ be a bounded interval and $f: I\to\overline{\mathbb{R}}$ a
measurable function. Assume that $f$ is finite a.e.. Then the
following hold:
\begin{enumerate}
\item $\ell_f$ is non-decreasing;
\item For every $t\in\mathbb{R}$,
  $\ell_f(t\pm)=\ell_f(t)\pm\frac{1}{2}\lambda\left(\left\{f=t\right\}\right)$,
  and hence $\ell_f$ is continuous at $t$ if and only if
  $\lambda\left(\left\{f=t\right\}\right)=0$. Moreover,
  $\lambda(\{\ell_f^{-1}<t \}\cap
  I)=\lambda\left(\left\{f<t\right\}\right)$ as well as
  $\lambda(\{\ell_f^{-1}>t\}\cap I )=\lambda\left(\{f>t\}\right)$;
\item  $\lim_{t\to-\infty}\ell_f(t)=\ell_f(-\infty)=\min I$ and $\lim_{t\to+\infty}\ell_f(t)=\ell_f(+\infty)=\max I$;
\item If $f$ is non-decreasing
  then 
$$\ell_f(t)={\textstyle \frac{1}{2}}
\left(f^{-1}(t)+f^{-1}(t-)\right),\quad 
  \forall\ t\in\mathbb{R}\, , 
$$
and also 
$$
\ell_f^{-1}(x)= (f^{-1} )^{-1}(x)=f(x+),\quad \ell_f^{-1}(x-)=f(x-),\quad
\forall\ x\in\overset{\circ}{I}\, ;
$$
\item If $f\in L^r(I)$ for some $1\le r<+\infty$, then
  $\big\|\ell_f^{-1}-t\big\|_r= \|f-t \|_r$ for every
  $t\in\mathbb{R}$.
\end{enumerate}
\end{prop}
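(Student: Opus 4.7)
The plan is to verify the five assertions in order, exploiting throughout the explicit representation of $\ell_f$ in terms of the distribution of $f$ on the bounded interval $I$. Parts (i) and (iii) are essentially one-liners: the function $t\mapsto\lambda(\{f<t\})$ is non-decreasing while $t\mapsto\lambda(\{f>t\})$ is non-increasing, so $\ell_f$ is non-decreasing; and since $f$ is finite a.e., $\{f<t\}\downarrow\varnothing$ and $\{f>t\}\uparrow I$ modulo a null set as $t\to-\infty$, so continuity of Lebesgue measure---which critically uses $\lambda(I)<+\infty$---forces $\ell_f(-\infty)=\min I$, with the case $t\to+\infty$ symmetric.

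For (ii), the jump formulas at $t$ follow from continuity of measure applied to the nested families $\{f<t-\varepsilon\}\uparrow\{f<t\}$ and $\{f>t-\varepsilon\}\downarrow\{f\geq t\}$ (and their right-hand mirror images) as $\varepsilon\downarrow 0$; the continuity criterion is then immediate. For the two identities involving $\ell_f^{-1}$, I would observe that, since $\ell_f$ is non-decreasing,
\[
\{x:\ell_f^{-1}(x)\geq t\}=\bigcap_{\varepsilon>0}\bigcup_{s>t-\varepsilon}[\ell_f(s),+\infty)=[\ell_f(t-),+\infty)
\]
up to at most one endpoint, whence $\lambda(\{\ell_f^{-1}<t\}\cap I)=\ell_f(t-)-\min I$ whenever $\min I\leq\ell_f(t-)\leq\max I$; substituting the jump formula together with $\lambda(I)=\lambda(\{f<t\})+\lambda(\{f=t\})+\lambda(\{f>t\})$ collapses this to $\lambda(\{f<t\})$. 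The companion identity follows from the dual representation $\{x:\ell_f^{-1}(x)>t\}=[\ell_f(t+),+\infty)$ modulo a point.

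Part (iv) specializes the earlier computations: when $f$ is non-decreasing, Proposition~\ref{le1} identifies $\{f<t\}$ and $\{f>t\}$ as intervals with endpoints $f^{-1}(t-)$ and $f^{-1}(t)$, yielding $\lambda(\{f<t\})=f^{-1}(t-)-\min I$ and $\lambda(\{f>t\})=\max I-f^{-1}(t)$; substitution into the definition of $\ell_f$ produces the claimed formula. For the inverse, I fix $x\in\overset{\circ}{I}$, set $t^{\star}=f(x+)$, and check by monotonicity of $f^{-1}$ that $f^{-1}(t-),f^{-1}(t)\leq x$ for $t<t^{\star}$ while both strictly exceed $x$ for $t>t^{\star}$; the former gives $\ell_f(t)\leq x$, the latter $\ell_f(t)>x$, so $\sup\{\ell_f\leq x\}=t^{\star}$, i.e.\ $\ell_f^{-1}(x)=f(x+)$. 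The left-limit identity follows from the auxiliary fact $\lim_{y\uparrow x}f(y+)=f(x-)$, itself a consequence of monotonicity.

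Finally, (v) is a short layer-cake calculation: writing
\[
\|\ell_f^{-1}-t\|_r^r=\int_0^{+\infty}rs^{r-1}\bigl(\lambda(\{\ell_f^{-1}>t+s\}\cap I)+\lambda(\{\ell_f^{-1}<t-s\}\cap I)\bigr)\,{\rm d}s
\]
and applying (ii) termwise replaces the integrand by the corresponding quantity for $f$, which recomposes (again via layer cake) to $\|f-t\|_r^r$. The one delicate point in the whole argument sits in (ii): distinguishing strict from non-strict inequalities at jump points of $\ell_f$ requires some care, but the exceptional sets involved are always at most countable and hence $\lambda$-negligible, so the identities are unaffected.
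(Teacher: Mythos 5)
Your proof is correct. The paper offers no argument for this proposition (it is dismissed as ``straightforward to verify''), and your verification supplies exactly the intended details: direct computation from the definition of $\ell_f$ together with continuity of Lebesgue measure on the bounded interval $I$ for (i)--(iii), the identification $\sup\{f<t\}=\inf\{f\ge t\}=f^{-1}(t-)$ and $\inf\{f>t\}=f^{-1}(t)$ for (iv), and the layer-cake formula combined with (ii) for (v), with the endpoint ambiguities correctly noted as $\lambda$-negligible.
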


\begin{example}\label{ex0}
Let $I=\mathbb{I}$ and 
$$
f(x) = \left\{
\begin{array}{ll}
\frac34 (x+1) & \mbox{\rm if } 0\le x < \frac13 ,\\[2mm]
\frac12  & \mbox{\rm if }\frac13  \le  x < \frac23  ,\\[2mm]
\frac32 x - 1  & \mbox{\rm if } \frac23 \le x \le 1.
\end{array}
\right.
$$
Here the functions $\ell_f,\
\ell_f^{-1}:\overline{\mathbb{R}}\to\overline{\mathbb{R}}$ are given
by
$$
\ell_f(t) =\left\{
\begin{array}{ll}
\frac23 \max \{t,0\} & \mbox{\rm if } t<\frac12 , \\[2mm]
\frac12  & \mbox{\rm if } t= \frac12  , \\[2mm]
\frac23  & \mbox{\rm if } \frac12< t <\frac34  , \\[2mm]
\frac43 \min \{t,1\} - \frac13 & \mbox{\rm if } t\ge \frac34 ,
\end{array}
\right.
\quad
\ell_f^{-1}(x) =\left\{
\begin{array}{ll}
-\infty & \mbox{\rm if } x<0 , \\[2mm]
\frac12 \min \{ 3x, 1\}   & \mbox{\rm if } 0 \le x <\frac23  , \\[2mm]
\frac14 (3x+1) & \mbox{\rm if } \frac23 \le x <  1  , \\[2mm]
+\infty & \mbox{\rm if } x\ge 1 .
\end{array}
\right.
$$
Figure \ref{f0} illustrates that indeed
$\big\|\ell_f^{-1}-t\big\|_r= \|f-t \|_r$ for all $t$, as asserted by
Proposition~\ref{pro0}(v).
\end{example}

\begin{figure}[h] 
\psfrag{tv1}[r]{\small $1$}
\psfrag{tv12}[r]{\small $\frac12$}
\psfrag{tv13}[r]{\small $\frac13$}
\psfrag{tv23}[r]{\small $\frac23$}
\psfrag{tv0}[r]{\small $0$}
\psfrag{th1}[]{\small $1$}
\psfrag{th13}[]{\small $\frac13$}
\psfrag{th12}[]{\small $\frac12$}
\psfrag{th23}[]{\small $\frac23$}
\psfrag{th0}[]{\small $0$}
\psfrag{tx}[]{$x$}
\psfrag{tt}[]{$t$}
\psfrag{tlt}[]{$\ell_f (t)$}
\psfrag{tlix}[]{$\ell_f^{-1} (x)$}
\psfrag{tfx}[]{$f(x)$}
\begin{center}
\includegraphics{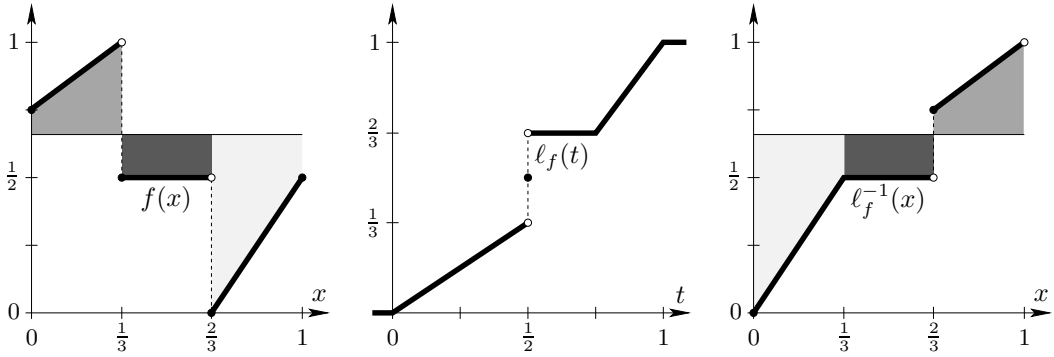}
\end{center}
\caption{Graphing $f$, $\ell_f$, and $\ell_f^{-1}$ of Example \ref{ex0}.}\label{f0}
\end{figure}

\begin{rem}
By Proposition~\ref{pro0}(v), minimizing $t\mapsto \|f-t \|_r$ for
$f\in L^r(I)$ is equivalent to minimizing
$t\mapsto\big\|\ell_f^{-1}-t\big\|_r$ for the {\em monotone\/} 
function $\ell^{-1}_f.$ Note also that if $f\in L^r(I)$ is non-decreasing then $f$ and $\ell^{-1}_f$ coincide a.e.,
by Proposition~\ref{pro0}(iv).
\end{rem}

Utilizing Propositions~\ref{le1} and~\ref{pro0}, we now establish a
few basic properties of the sets $B^f$ that will be used in the next
section.

\begin{lem}\label{le5}
Let $I$ be a bounded interval and $f:I\to\overline{\mathbb{R}}$ a
measurable function. Assume that $f$ is finite a.e.. Then
$B^f=Q_{\frac{1}{2}(\min I+\max I)}^{\ell_f}.$ Moreover, the following
hold:
\begin{enumerate}
\item For every $t\in\mathbb{R},$
  $\lambda\left(\left\{f>t\right\}\right)>\lambda\left(\left\{f\le
      t\right\}\right)$ if $t<\min B^f$, and
  $\lambda\left(\left\{f<t\right\}\right)>\lambda\left(\left\{f\ge
      t\right\}\right)$ if $t>\max B^f$;
\item $\lambda \bigl(f^{-1} (\overset{\circ}{B^f} ) \bigr) =0$;
\item $\lambda\left(\left\{f\le\min B^f\right\}\right)=\lambda\left(\left\{f\ge\max B^f\right\}\right)$.
\end{enumerate}
\end{lem}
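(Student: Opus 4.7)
The plan is to translate every statement into a condition on the non-decreasing companion $\ell_f$, then apply Propositions~\ref{le1} and~\ref{pro0}. Throughout, set $c = \tfrac{1}{2}(\min I + \max I)$. Rearranging the definition of $\ell_f$ gives
\[
\lambda(\{f<t\}) - \lambda(\{f>t\}) = 2\bigl(\ell_f(t) - c\bigr),
\]
and Proposition~\ref{pro0}(ii) supplies the symmetric-jump identity $\ell_f(t+) - \ell_f(t) = \ell_f(t) - \ell_f(t-) = \tfrac{1}{2}\lambda(\{f=t\})$. Combining these, $t \in B^f$ is equivalent to $|\ell_f(t) - c| \le \tfrac{1}{2}\lambda(\{f=t\})$, which is in turn the same as $\ell_f(t-) \le c \le \ell_f(t+)$. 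Since $\ell_f$ is non-decreasing with $\ell_f(-\infty) = \min I \le c \le \max I = \ell_f(+\infty)$ by Proposition~\ref{pro0}(iii), a routine monotonicity check identifies this last condition as $t \in Q_c^{\ell_f}$, establishing $B^f = Q_c^{\ell_f}$.

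For (i), note that $t < \min B^f = \inf\{\ell_f \ge c\}$ forces $\ell_f(t+) < c$: otherwise every $s > t$ would satisfy $\ell_f(s) \ge \ell_f(t+) \ge c$, contradicting the definition of the infimum. Passing to the limit from above in the definition of $\ell_f$ yields $\ell_f(t+) = \min I + \lambda(\{f \le t\})$, so $\ell_f(t+) < c$ translates into $\lambda(\{f \le t\}) < \lambda(I)/2$, equivalently $\lambda(\{f > t\}) > \lambda(\{f \le t\})$; the symmetric statement for $t > \max B^f$ follows analogously from $\ell_f(t-) = \min I + \lambda(\{f < t\})$. For (ii), if $\overset{\circ}{B^f} = \varnothing$ the claim is trivial; otherwise write $a = \min B^f < b = \max B^f$. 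Proposition~\ref{le1} applied to $\ell_f$ at level $c$ gives $\ell_f(s) = c$ for every $s \in (a, b)$, and one-sided limits from within $(a, b)$ produce $\ell_f(a+) = \ell_f(b-) = c$, hence
\[
\lambda\bigl(f^{-1}(\overset{\circ}{B^f})\bigr) = \lambda(\{a < f < b\}) = \bigl(\ell_f(b-) - \min I\bigr) - \bigl(\ell_f(a+) - \min I\bigr) = 0.
\]

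For (iii), the same endpoint formulas give $\lambda(\{f \le a\}) = \ell_f(a+) - \min I$ and $\lambda(\{f \ge b\}) = \max I - \ell_f(b-)$, so the claimed equality reduces to $\ell_f(a+) + \ell_f(b-) = 2c$. In the non-degenerate case $a < b$ this is immediate, since both one-sided limits already equal $c$. The main obstacle I anticipate is the singleton case $a = b$: here the symmetric-jump identity yields $\ell_f(a+) + \ell_f(a-) = 2\ell_f(a)$, so the target becomes $\ell_f(a) = c$. I would handle this delicate step by inspecting, via the endpoint characterisation of $Q_c^{\ell_f}$, exactly how the jump of $\ell_f$ at $a$ must straddle the level $c$, and then invoking Proposition~\ref{pro0}(ii) to pin down the precise location of $\ell_f(a)$ within that jump.
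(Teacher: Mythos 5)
Your reduction of the main identity and of parts (i)--(ii) to properties of $\ell_f$ is correct and is essentially the paper's own argument: the equivalence $t\in B^f\Leftrightarrow \ell_f(t-)\le c\le\ell_f(t+)$ (with $c=\tfrac12(\min I+\max I)$), its identification with $t\in Q_c^{\ell_f}$, the one-sided formulas $\ell_f(t+)=\min I+\lambda(\{f\le t\})$ and $\ell_f(t-)=\min I+\lambda(\{f<t\})$ for (i), and the constancy $\ell_f\equiv c$ on $\overset{\circ}{B^f}$ for (ii) all check out.

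The gap is exactly where you flagged it, in part (iii) when $B^f=\{a\}$ is a singleton, and it cannot be closed in the way you propose. You correctly reduce the claim in that case to $\ell_f(a)=c$, i.e.\ to the jump of $\ell_f$ at $a$ being \emph{centred} on the level $c$. But $a\in B^f$ only says that the jump \emph{straddles} $c$, namely $\ell_f(a-)\le c\le\ell_f(a+)$; nothing pins the midpoint $\ell_f(a)$ to $c$, and indeed (iii) is false in this case. Take $I=[0,1]$ and $f(x)=x$ for $0\le x<\tfrac1{10}$, $f(x)=5$ for $\tfrac1{10}\le x\le1$. Then $\lambda(\{f<5\})=\tfrac1{10}$, $\lambda(\{f>5\})=0$, $\lambda(\{f=5\})=\tfrac9{10}$, so $5$ is the unique balanced value and $B^f=\{5\}$, yet $\lambda(\{f\le5\})=1\neq\tfrac9{10}=\lambda(\{f\ge5\})$. (Here $\ell_f(5)=\tfrac{11}{20}\neq\tfrac12=c$.) The paper's own proof quietly proves (ii) and (iii) only under the assumption $\overset{\circ}{B^f}\neq\varnothing$, so the printed statement of (iii) is too strong as it stands; note, however, that (iii) is only ever invoked (in the proof of Theorem~\ref{le2}(ii)) for two \emph{distinct} balanced values $t<u$, which forces $\min B^f<\max B^f$, and in that non-degenerate case your endpoint-limit argument $\ell_f(a+)=\ell_f(b-)=c$ is complete and correct.
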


\begin{proof}
For convenience, let  $\xi=\frac{1}{2}(\min I+\max I)$, and note that,
by definition, $B^f= \{t:  |\ell_f(t)-\xi |\le\frac{1}{2}\lambda (
\{f=t \} ) \}$. Define $a=\inf \{\ell_f\ge \xi \},\ b=\sup \{\ell_f\le
\xi \}$, and hence $[a,b]=Q^{\ell_f}_{\xi}.$ It is easy to see that $a$ and $b$ are finite, with $a\le b$, and
$$
\ell_f(a-)\le \xi\le \ell_f(a+),\quad  \ell_f(b-)\le \xi\le \ell_f(b+)\, , 
$$
which implies  that $a, b\in B^f$, by Proposition~\ref{pro0}(ii). 
For every $t\in \, ]a,b[$, $\ell_f(t)=\xi$, thus $t\in B^f$, and hence $[a,b]\subset B^f$.
For every $t>b$, $\ell_f(t-)>\xi$, so again by
Proposition~\ref{pro0}(ii), $\ell_f(t)-\xi>\frac{1}{2}\lambda ( \{f=t
\} )$, which implies that $t\notin B^f$ and  $\lambda ( \{f<t \}
)>\lambda ( \{f\ge t \} )$. 
Similarly, $t\notin B^f$ and $\lambda ( \{f>t \} )>\lambda ( \{f\le t
\} )$ for every $t<a$. This proves that $B^f=[a,b]=Q_{\xi}^{\ell_f}$, 
and also establishes (i).

To prove (ii) and (iii), assume that
$\overset{\circ}{B^f}\neq\varnothing$, i.e., $a<b$. For every
$t\in\overset{\circ}{B^f}$, $\ell_f(t)=\xi,$ i.e., $\lambda ( \{f>t \}
)=\lambda ( \{f<t \} )$. Hence for all $a<t_1<t_2<b$, 
$$
\lambda ( \{f<t_1 \} )\le\lambda ( \{f<t_2 \} )=\lambda
 ( \{f>t_2 \} )\le\lambda ( \{f>t_1 \} )=\lambda ( \{f<t_1
 \} )\, .
$$ 
Thus $\lambda ( \{t_1\le f<t_2 \} )=\lambda (\{t_1<f\le t_2 \} )=0$. 
Letting $t_1\downarrow a$ and $t_2\uparrow b$, properties (ii) and (iii) immediately follow from the continuity of $\lambda$.
\end{proof}

\begin{rem}
If, under the assumptions of Lemma~\ref{le5}, the function $f$ is non-decreasing, then $B^f=Q^{f^{-1}}_{\frac{1}{2}(\min I+\max I)}$.
\end{rem}

\section{Approximating $L^r$-functions by step functions}\label{Lr}

This section characterizes the best approximations of a given function
by step functions. Two main results (Lemma~\ref{le3} and
Theorem~\ref{le2}) will be used in Section 5 to identify best finitely
supported approximations of a given probability measure
$\mu\in\mathcal{P}$; they may also be of independent interest.
Throughout this section, we assume that the closed, non-degenerate
interval $I\subset\mathbb{R}$ is {\em bounded}. (For unbounded $I$, 
most statements become either trivial or meaningless.)

First, we give a result on the best approximation of a monotone
function by a (monotone) step function with a prescribed range and a
single jump at a variable location.

\begin{lem}\label{le3}
Assume that $f:I\to\overline{\mathbb{R}}$ is non-decreasing, and $f\in
L^r(I)$ for some $r\ge1$. Let $a, b\in\mathbb{R}$ with $a<b$. Then the value of 
$$
\left\|f-\left(a{\bf 1}_{ [\min I,\xi
        [}+b{\bf 1}_{ [\xi,\max I ]}\right)\right\|_r,\quad  \forall \xi\in I\, , $$ 
is minimal if and only if $\xi\in Q_{\frac{1}{2}(a+b)}^f$.
\end{lem}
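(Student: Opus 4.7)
The plan is to write the $r$-th power of the objective as a scalar function $\phi(\xi)$ whose increments can be expressed as a single integral, and then to identify the minimizers of $\phi$ by a sign analysis of that integrand. Set
\[
\phi(\xi):=\int_{\min I}^{\xi}|f(x)-a|^r\,{\rm d}x+\int_{\xi}^{\max I}|f(x)-b|^r\,{\rm d}x,
\]
so that $\phi(\xi)=\|f-g_\xi\|_r^r$ with $g_\xi:=a{\bf 1}_{[\min I,\xi[}+b{\bf 1}_{[\xi,\max I]}$. Additivity of the integral gives, for all $\xi_1<\xi_2$ in $I$,
\[
\phi(\xi_2)-\phi(\xi_1)=\int_{\xi_1}^{\xi_2}h\bigl(f(x)\bigr)\,{\rm d}x, \qquad h(y):=|y-a|^r-|y-b|^r,
\]
so it remains to analyze the sign of $h\circ f$ and to translate this into monotonicity properties of $\phi$.

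I would next dispose of the one-variable fact that, for $a<b$ and $r\ge 1$, the function $h$ is non-decreasing on $\R$, strictly increasing on $[a,b]$, and vanishes at $c:=\tfrac12(a+b)$; hence $h(y)<0$ for $y<c$ and $h(y)>0$ for $y>c$. This is a routine case split into $y\le a$, $a\le y\le b$, $y\ge b$ (in the borderline case $r=1$ the function $h$ is even constant on $(-\infty,a]$ and $[b,\infty)$, but the sign pattern is the same). Writing $[m,M]:=Q_c^f$, Proposition~\ref{le1} yields $f(x)<c$ whenever $x<m$, $f(x)>c$ whenever $x>M$, and $f(x)=c$ for every $x\in\,]m,M[$. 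Combining these two facts, $h\circ f$ is strictly negative on $[\min I,m[$, identically zero on $\,]m,M[$, and strictly positive on $\,]M,\max I]$. Substituting into the increment formula, $\phi$ is strictly decreasing on $[\min I,m]$, constant on $[m,M]=Q_c^f$, and strictly increasing on $[M,\max I]$, so its set of minimizers is precisely $Q_c^f$, as asserted.

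The only place requiring any care, and the step I would double-check, is the strictness of the monotonicity of $\phi$ outside $Q_c^f$: for $\xi_1<\xi_2\le m$ (respectively $M\le\xi_1<\xi_2$) one must see that $h\circ f$ is strictly negative (respectively strictly positive) on a subset of $[\xi_1,\xi_2]$ of positive Lebesgue measure. This is immediate from the strict sign of $h$ off $c$ together with the definitions $m=\inf\{f\ge c\}$ and $M=\sup\{f\le c\}$, which force the sign of $f-c$ on the whole of $[\min I,m[$ and $\,]M,\max I]$ respectively. Aside from this verification, the argument is entirely routine, so I do not anticipate a serious obstacle.
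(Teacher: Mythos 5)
Your proposal is correct and follows essentially the same route as the paper: both express the increment of the $r$-th power of the objective between two points of $I$ as $\int(|f(x)-a|^r-|f(x)-b|^r)\,{\rm d}x$ and determine its sign from the position of $f$ relative to the midpoint $c=\tfrac12(a+b)$, i.e., relative to $Q_c^f$. Your packaging of the integrand as the monotone function $h(y)=|y-a|^r-|y-b|^r$ with $h<0$ for $y<c$ and $h>0$ for $y>c$ is a slightly cleaner way to obtain the strict decrease/constancy/strict increase profile than the paper's explicit $\varepsilon$-lower bound, but it is the same argument.
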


\begin{proof}
Given $f\in L^r(I)$ and $a<b$, define
$\psi(\xi)=\big\|f-\left(a{\bf 1}_{ [\min I,\xi [}+b{\bf 1}_{
    [\xi,\max I ]}\right)\big\|_r$ for all $\xi\in I$, and let
$c=\frac{1}{2}(a+b)$. Clearly, the function $\psi$ is non-negative and
continuous, and so attains a minimal value.
If $\xi>f^{-1}(c)$ then there exists $0<\varepsilon<\xi-f^{-1}(c)$ such that
$f(x)>c$ for\ all $x\in [\xi-\varepsilon,\xi ]$. Hence
\begin{align*}
      \psi(\xi)^r-\psi \bigl( f^{-1}(c) \bigr)^r & =\int_{f^{-1}(c)}^{\xi} ( |f(x)-
      a |^r- |f(x)-b |^r )\, {\rm d}x\\
      & = \int_{f^{-1}(c)}^{\xi} \bigl ( (f(x)-a )^r- (f(x)-b )^r
      \bigr)\, 
      {\bf 1}_{ \{f\ge b \}}{\rm d}x\\
&\quad +\int_{f^{-1}(c)}^{\xi} \bigl( (f(x)-a )^r-\bigl (b-f(x)
\bigr)^r \bigr) \, {\bf 1}_
{ \{f<b \}}{\rm d}x\\
 & \ge \int_{\xi-\varepsilon}^{\xi}(b-a)^r{\bf 1}_{ \{f\ge b \}}{\rm d}x+\int_{\xi-\varepsilon}^{\xi} (2f(x)-a-b )^r{\bf 1}_{ \{f<b \}}
{\rm d}x\\
& \ge \ \varepsilon\min \{b-a,2(f(\xi-\varepsilon)-c) \}^r>0\, , 
\end{align*}
i.e., $\psi(\xi)>\psi \bigl( f^{-1}(c) \bigr)$. Similarly, $\psi(\xi)>\psi
\bigl(f^{-1}(c) \bigr)$ whenever $\xi<\inf\{f\ge c\}$. Therefore $\psi$ 
attains its minimal value on the interval $\left[\inf\{f\ge
  c\},f^{-1}(c)\right]=Q_c^f$, 
and the proof will be complete once it is shown that in fact $\psi$ is
constant on $Q_c^f$. If $Q_c^f$ is a singleton, then, trivially, this
is the case. On the other hand, if $\xi, \eta\in
\overset{\circ}{Q^f_c}$ with $\xi<\eta$, then $f ( [\xi,\eta ])=\{c\}$, by Proposition~\ref{le1}, and 
$$
\psi(\eta)^r-\psi(\xi)^r=\int_{\xi}^{\eta} ( |f(x)-b |^r- |f(x)-
      a |^r )\, {\rm d}x
      =\int_{\xi}^{\eta} (|c-b|^r-|c-a|^r )\, {\rm d}x=0\, .
$$ 
Thus $\psi$ is constant on $Q_c^f$, as claimed.
\end{proof}

\begin{rem}\label{re1}
The monotonicity of $f$ is essential in Lemma~\ref{le3}. To see this,
take for instance $I=[0,5]$ and the (non-monotone) function $f=16\cdot{\bf 1}_{[0,1[}+8\cdot{\bf 1}_{[1,2[}+18\cdot{\bf 1}_
{[2,3[}+9\cdot{\bf 1}_{[3,5]}$. For $a=0,\ b=24$, it is
straightforward to verify that $\big\|f-24\cdot{\bf 1}_{ [\xi,5
  ]}\big\|_r$ is minimal precisely for $\xi\in  \{0,2,5 \}$ if $r=1$
or $r=2$, for $\xi=5$ if $1<r<2$, and for $\xi\in \{0,2 \}$ if $r>2$. 
In general, therefore, the set of minimizers $\xi$ is not an interval and may depend on $r$.
\end{rem}

The remainder of this section deals with a problem that is dual to the one
addressed by Lemma~\ref{le3}, namely the best approximation of an
$L^r$-function $f$ by a step function with prescribed locations but
variable jumps. By considering intervals of constancy individually,
clearly it is enough to consider the approximation of $f$ by a {\em
  constant\/} function. Remember that the closed, non-degenerate
interval $I\subset\mathbb{R}$ is assumed to be bounded throughout.

\begin{theorem}\label{le2}
Assume that $f\in L^{r_0}(I)$ for some $r_0\ge1$. 
Then for every $1\le r\le r_0$, there exists $\tau_r^f\in\mathbb{R}$
such that 
$$\left\|f-\tau_r^f\right\|_r\le \|f-t \|_r,\quad \forall
t\in\mathbb{R}.
$$ 
Moreover, the following hold:
\begin{enumerate}
\item $\tau_r^f\in [{\rm essinf}_I f, {\rm esssup}_I f ]$;
\item $ \|f-t \|_1=\left\|f-\tau_1^f\right\|_1$ if and only if $t\in
  B^f$;
\item For $1<r\le r_0$, the number $\tau_r^f$ is unique, and $r\mapsto \tau_r^f$ is continuous.
\end{enumerate}
\end{theorem}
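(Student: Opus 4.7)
The plan is to study $\varphi_r(t) := \|f-t\|_r$ as a convex, real-valued function on $\mathbb{R}$. Since $I$ is bounded and $f \in L^{r_0}(I)$, H\"older's inequality gives $f \in L^r(I)$ for every $1 \le r \le r_0$, and dominated convergence makes $\varphi_r$ continuous on $\mathbb{R}$. The triangle inequality yields $\varphi_r(t) \ge |t|\lambda(I)^{1/r} - \|f\|_r \to \infty$ as $|t| \to \infty$, so $\varphi_r$ attains its infimum at some $\tau_r^f \in \mathbb{R}$. For (i), if $t < m := {\rm essinf}_I\, f$, then $f - t = (f-m) + (m-t) > f - m \ge 0$ $\lambda$-a.e., whence $|f-t|^r > |f-m|^r$ on a set of positive measure and $\varphi_r(t) > \varphi_r(m)$; the symmetric argument on the other side forces $\tau_r^f \in [{\rm essinf}_I\, f,\, {\rm esssup}_I\, f]$.

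For (iii), the strict convexity of $u \mapsto |u|^r$ for $r > 1$ propagates to $t \mapsto |f(x) - t|^r$ at each $x$ with $|f(x)| < \infty$, and integrating over the non-degenerate interval $I$ produces strict convexity of $\varphi_r^r$, hence the uniqueness of $\tau_r^f$. To establish continuity in $r$, I would first verify that $(r,t) \mapsto \varphi_r(t)$ is jointly continuous on $[1,r_0] \times \mathbb{R}$ by dominated convergence: on any compact $(r,t)$-set, $|f-t|^r$ is dominated by $(|f| + M)^{r_0} + 1 \in L^1(I)$. Fixing $r \in (1,r_0]$ and $r_n \to r$, part (i) confines $(\tau_{r_n}^f)$ to $[{\rm essinf}_I\, f,\, {\rm esssup}_I\, f]$; any subsequential limit $\tau^*$ satisfies $\varphi_r(\tau^*) \le \varphi_r(t)$ for all $t$ (by passing to the limit in $\varphi_{r_n}(\tau_{r_n}^f) \le \varphi_{r_n}(t)$), so $\tau^* = \tau_r^f$ by uniqueness, and the full sequence converges.

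For (ii), I would compute the one-sided derivatives of the convex function $\varphi_1$. For $h > 0$, the difference quotient $h^{-1}(|f(x)-t-h| - |f(x)-t|)$ is uniformly bounded by $1$ and converges pointwise $\lambda$-a.e.\ to $-{\rm sgn}(f(x)-t)$, so dominated convergence yields $\varphi_1'(t+) = \lambda(\{f \le t\}) - \lambda(\{f > t\})$ and, analogously, $\varphi_1'(t-) = \lambda(\{f < t\}) - \lambda(\{f \ge t\})$. By convexity, $t$ minimizes $\varphi_1$ iff $\varphi_1'(t-) \le 0 \le \varphi_1'(t+)$, which rearranges to $|\lambda(\{f<t\}) - \lambda(\{f>t\})| \le \lambda(\{f=t\})$, precisely the condition $t \in B^f$. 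I expect the main obstacle to be this last bookkeeping, together with the careful justification of the difference-quotient limits in the $L^1$ case; the remaining pieces follow routinely from convexity and dominated convergence.
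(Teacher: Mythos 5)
Your overall framework (convexity and coercivity of $t\mapsto\|f-t\|_r$) matches the paper's, but two of your three sub-arguments take genuinely different routes. For (ii), the paper first proves a structural lemma (Lemma~\ref{le5}) showing that $B^f$ is the compact interval $Q^{\ell_f}_{\frac12(\min I+\max I)}$ and then compares $\phi_1$ at points inside and outside $B^f$ by direct integral estimates; you instead compute the one-sided derivatives of the convex function $\varphi_1$ and read off the minimizers from $\varphi_1'(t-)\le 0\le\varphi_1'(t+)$. Your route is more self-contained and makes the equivalence with the balancedness condition transparent, though it does not by itself recover the interval structure of $B^f$ that the paper reuses later (e.g., in Theorem~\ref{thm1}). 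One small slip: the right difference quotient tends to $+1$, not to $-\mathrm{sgn}(f(x)-t)=0$, at points where $f(x)=t$; your stated formula $\varphi_1'(t+)=\lambda(\{f\le t\})-\lambda(\{f>t\})$ is nevertheless the correct one, and the bookkeeping to $t\in B^f$ goes through. For the uniqueness in (iii), your pointwise strict convexity of $t\mapsto|f(x)-t|^r$ is equivalent to, and arguably cleaner than, the paper's computation that $\frac{{\rm d}}{{\rm d}t}\phi_r^r$ is increasing.

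There is one genuine (though easily repaired) gap in your continuity argument for (iii): you extract a subsequential limit of $(\tau_{r_n}^f)$ from the confinement to $[{\rm essinf}_I f,\,{\rm esssup}_I f]$, but this interval need not be bounded when $f$ is unbounded, so compactness is not available from (i) alone. You need a uniform bound such as $|\tau_{r_n}^f|\,\lambda(I)^{1/r_n}=\|\tau_{r_n}^f\|_{r_n}\le\|f-\tau_{r_n}^f\|_{r_n}+\|f\|_{r_n}\le 2\|f\|_{r_n}\le 2\lambda(I)^{1/r_n-1/r_0}\|f\|_{r_0}$, which is exactly the estimate the paper deploys in Lemma~\ref{le4}. (The paper's own continuity proof sidesteps compactness altogether: it uses strict convexity of $\phi_r$ at $\tau_r^f$ together with pointwise convergence $\phi_{r_n}\to\phi_r$ to trap $\tau_{r_n}^f$ in $[\tau_r^f-\varepsilon,\tau_r^f+\varepsilon]$ directly.) With that one line added, your argument is complete.
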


\begin{proof}
Given $f\in L^{r_0}(I)$, recall that $f\in L^r(I)$ for every $1\le
r\le r_0$, since $I$ is bounded. Hence the auxiliary function $\phi_r$
given by 
\begin{equation}\label{phi}
\phi_r(t)=\lambda(I)^{-1/r} \|f-t \|_r,\quad \forall  t\in\mathbb{R}\, ,
\end{equation} 
is well defined and real-valued. Note that
$\lim_{|t|\to+\infty}\phi_r(t)=+\infty$. Since $\phi_r$ is convex,
there exists $\tau_r^f\in\mathbb{R}$ such that $\phi_r (\tau_r^f )\le\phi_r(t)$ for all $t\in\mathbb{R}$.

It remains to prove assertions (i)--(iii).
To establish (i), let $b={\rm esssup}_I f$ for convenience, and
observe that, for all  $t>b$, 
\begin{align*}
    \lambda(I) (\phi_r(t)^r-\phi_r(b)^r )=\int_I \Bigl( \bigl( t-f(x)\bigr)^r-\bigl(b-f(x)\bigr)^r
    \Bigr){\rm d}x& \ge\int_I(t-b)^r{\rm d}x\\
& =\lambda(I)(t-b)^r>0,
\end{align*} 
hence $\phi_r(t)>\phi_r(b)$. Similarly, $\phi_r(t)>\phi_r ({\rm
  essinf}_I f )$ whenever $t<{\rm essinf}_I f$. This shows that 
$\tau_r^f\in [{\rm essinf}_I f, {\rm esssup}_I f ]$.

To prove (ii), given $t>\max B^f$, pick any $u$ with $\max B^f<u<t$.
Then, 
\begin{align*}
\lambda(I) \bigl(\phi_1(t)  -\phi_1(u) \bigr) & =\int_I (|f(x)-t|-|f(x) -u|)\, {\rm d}x\\
& \ge \int_{ \{f<u \}} \! \Bigl (t-f(x)- \bigl(u-f(x)\bigr) \Bigr) \, {\rm
  d}x+\int_{ \{f\ge u \}} \! \bigl(f(x)-t- (f(x)-u) \bigr) \, {\rm d}x\\
& = \ (t-u)\bigl( \lambda(\{f<u\})-\lambda(\{ f\ge u\})\bigr) > 0\, ,
\end{align*}
by Lemma~\ref{le5}(i), and so $\tau_1^f\le\max B^f$. Similarly, 
$\tau_1^f\ge\min B^f$. On the other hand, if $t, u\in B^f$, then
\begin{align*}
      \lambda(I) \bigl(\phi_1(u)-\phi_1(t) \bigr) & = \int_{ \{f\le\min
        B^f \}} \Bigl(u-f(x)- \bigl(t-f(x) \bigr) \Bigr){\rm d}x\\ & \quad  +\int_{f^{-1} (\overset{\circ}{B^f} )} ( |f(x)-u |- |f(x)
      -t | ){\rm d}x\\ &\quad  +\int_{ \{f\ge\max B^f \}} \bigl(f(x)-u- (f(x)-t )
     \bigr ){\rm d}x\\ & = \ (u-t) \bigl( \lambda ( \{f\le\min B^f \} )-\lambda (
      \{f\ge\max B^f \} ) \bigr)=0\, ,
\end{align*}
by Lemma~\ref{le5}(ii) and (iii). Thus $\phi_1(t)$ is minimal if and only if $t\in B^f$.

Regarding (iii), we claim that the number $\tau_r^f$ is unique for
$1<r\le r_0$. Trivially, this is true if $f$ is essentially
constant. In any other case, note that $\phi_r^r$ is differentiable
w.r.t. $t$, and
\begin{equation}\label{phid}
\begin{split}\frac{\lambda(I)}{r}\frac{{\rm d}\phi^r_r(t)}{{\rm d}t} &
  = \int_I |f(x)-t |^{r-1}{\rm sgn} \bigl(t-f(x) \bigr)\, {\rm d}x\\
& = \int_{ \{f<t \}} \bigl(t-f(x) \bigr)^{r-1}{\rm d}x-\int_{ \{f>t \}} (f(x)-t )^{r-1}{\rm d}x\\
& = \int_{ \{f\le\min B^f \}} \bigl(t-f(x) \bigr)^{r-1}{\rm d}x-\int_{ \{f\ge\max B^f \}} (f(x)-t )^{r-1}{\rm d}x
\end{split}
\end{equation} 
is  increasing in $t$. Thus $\phi_r^r$ is strictly convex, and $\tau_r^f$ is unique.

To show that $r\mapsto \tau_r^f$ is continuous on $ ]1,r_0 ]$, pick
any $1<r\le r_0$ and any sequence $(r_n)$ in $ ]1,r_0 ]$ with
$\lim_{n\to\infty}r_n=r$. Given $\varepsilon>0$, by the strict
convexity of $\phi_r$, there exists $\delta>0$ such that 
$\phi_r (\tau_r^f\pm\varepsilon )>\phi_r (\tau_r^f )+3\delta$. 
On the other hand, $\lim_{n\to\infty}\phi_{r_n}(t)=\phi_r(t)$ for
every $t\in\mathbb{R}$, by the Dominated Convergence Theorem. 
Hence for all sufficiently large $n$, 
$$
\phi_{r_n} (\tau_r^f\pm\varepsilon )>\phi_r (\tau_r^f )+2\delta\ \
\text{and}\ \ \phi_{r_n} (\tau_r^f )<\phi_r (\tau_r^f )+\delta \, ,
$$ 
from which it is clear that $ |\tau_{r_n}^f-\tau_r^f |<\varepsilon$. 
Since $\varepsilon>0$ was arbitrary, $r\mapsto \tau_r^f$ is continuous.
\end{proof}

For monotone functions, Theorem~\ref{le2} takes a particularly simple
form.

\begin{cor}\label{co2}
Assume that $f:I\to\overline{\mathbb{R}}$ is non-decreasing, and  
$f\in L^{r_0}(I)$ for some $r_0\ge1$. Then for every $1\le r\le r_0$, 
there exists $\tau_r^f\in\mathbb{R}$ such that  
$$
\left\|f-\tau_r^f\right\|_r\le \|f-t \|_r,\ \ \forall\
t\in\mathbb{R}\, . 
$$ 
Moreover, the following hold:
\begin{enumerate}
\item $\tau^f_r\in [f(\min I+),f(\max I-) ]$.
\item $ \|f-t \|_1= \|f-\tau_1^f \|_1$ if and only if $t\in Q_{\frac{1}{2}(\min I+\max I)}^{f^{-1}}$.
\item For $1<r\le r_0$, the number $\tau_r^f$ is unique and $r\mapsto \tau_r^f$ is continuous.
\end{enumerate}
\end{cor}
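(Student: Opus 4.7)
My plan is to deduce the corollary directly from Theorem~\ref{le2}, using only two simple specializations available to us in the monotone setting. The existence claim, together with the uniqueness for $r>1$ and the continuity of $r\mapsto \tau_r^f$ in statement (iii), is nothing but Theorem~\ref{le2} applied verbatim, so the only genuine work lies in sharpening the location of $\tau_r^f$ in (i) and in rewriting the balanced set $B^f$ for monotone $f$ in (ii).

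For (i), the plan is to show that for a non-decreasing $f:I\to\overline{\mathbb{R}}$ one has
$$
{\rm essinf}_I f=f(\min I+) \quad \text{and} \quad {\rm esssup}_I f=f(\max I-)\, ,
$$
after which (i) follows immediately from Theorem~\ref{le2}(i). This identity is a routine consequence of monotonicity: on the one hand, $f(x)\ge f(\min I+)$ for every $x\in \, ]\min I,\max I]$, giving ${\rm essinf}_I f\ge f(\min I+)$; on the other hand, for every $\varepsilon>0$ the set $\{f<f(\min I+)+\varepsilon\}$ contains an interval of the form $\,]\min I,\min I+\delta[$ of positive Lebesgue measure, forcing ${\rm essinf}_I f\le f(\min I+)$. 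The argument for the essential supremum is symmetric.

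For (ii), the plan is to combine Theorem~\ref{le2}(ii) with the remark following Lemma~\ref{le5}: for a non-decreasing $f$ one has $B^f=Q^{f^{-1}}_{\frac12(\min I+\max I)}$, and substituting this identification into Theorem~\ref{le2}(ii) yields the claimed characterization. No further computation is needed.

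Since both pieces are direct specializations of already-proved results, I do not expect a real obstacle here; the only point requiring a moment's care is verifying that the essential infimum/supremum of a monotone function indeed coincides with its one-sided limits at the endpoints, which is a standard computation. Once (i) and (ii) are established, part (iii) transfers from Theorem~\ref{le2}(iii) without modification, completing the proof.
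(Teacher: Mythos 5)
Your proposal is correct and follows exactly the route the paper intends: the corollary is a direct specialization of Theorem~\ref{le2}, with (i) obtained by identifying ${\rm essinf}_I f$ and ${\rm esssup}_I f$ with the one-sided limits $f(\min I+)$ and $f(\max I-)$ for a non-decreasing $f$, and (ii) obtained by substituting the identity $B^f=Q^{f^{-1}}_{\frac12(\min I+\max I)}$ from the remark following Lemma~\ref{le5}. The paper omits the proof precisely because these are the only verifications needed, and your treatment of them is accurate.
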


\begin{rem}\label{re1a}
(i) If $f\in L^2(I)$ then simply
$\tau_2^f=\frac{1}{\lambda(I)}\int_If(x)\, {\rm d}x$.

(ii) For $r=1$, Corollary~\ref{co2} immediately yields
Lemma~\ref{le3}. Indeed, under the assumptions of the latter, $f^{-1}
|_{[a,b]}\in L^1 ([a,b] ) $, and $ \|f- (a{\bf 1}_{ [\min I,\xi
  [}+b{\bf 1}_{ [\xi,\max I ]} ) \|_1$ is minimal if and only if $
\|f^{-1} |_{[a,b]}-\xi \|_1 $ is minimal. By Corollary~\ref{co2}, this
is the case precisely if $\xi\in Q^{g^{-1}}_{\frac{1}{2}(a+b)}$ 
with $g=f^{-1} |_{[a,b]} $, which by Proposition~\ref{le1} is
equivalent to $\xi\in Q^f_{\frac{1}{2}(a+b)}$. 
\end{rem}

Given $r>1$, the number $\tau_r^f$ depends on $f$ in a monotone and continuous way, as the following two simple observations show.

\begin{prop}\label{mono}
Assume that $f,\ g\in L^r(I)$ for some $r>1$, and $f\le g$. Then $\tau_r^f\le\tau_r^g$, and $\tau_r^f=\tau_r^g$ if and only if $f=g$ a.e..
\end{prop}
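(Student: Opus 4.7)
The plan is to exploit the first-order optimality condition embodied in equation~\eqref{phid} from the proof of Theorem~\ref{le2}. I would introduce the auxiliary function $\varphi:\mathbb{R}\to\mathbb{R}$ given by $\varphi(y)=|y|^{r-1}\,{\rm sgn}(y)$, which for $r>1$ is continuous and strictly increasing on $\mathbb{R}$. By~\eqref{phid}, up to a positive multiplicative constant the derivative of $t\mapsto\|f-t\|_r^r$ equals
$$
\Psi_f(t):=\int_I\varphi\bigl(t-f(x)\bigr)\,{\rm d}x\,,
$$
and analogously for $g$. Strict monotonicity of $\varphi$ transfers at once to $\Psi_f$, while Theorem~\ref{le2}(iii) ensures that $\tau_r^f$ is the unique zero of $\Psi_f$ (and likewise $\tau_r^g$ is the unique zero of $\Psi_g$).

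To prove $\tau_r^f\le\tau_r^g$, I would observe that $f\le g$ pointwise implies $t-f(x)\ge t-g(x)$ for every $t$ and every $x\in I$, and hence $\varphi\bigl(t-f(x)\bigr)\ge \varphi\bigl(t-g(x)\bigr)$ by monotonicity of $\varphi$. Integrating over $I$ yields $\Psi_f(t)\ge \Psi_g(t)$ for all $t\in\mathbb{R}$. Evaluating at $t=\tau_r^f$ gives $\Psi_g(\tau_r^f)\le \Psi_f(\tau_r^f)=0=\Psi_g(\tau_r^g)$, and since $\Psi_g$ is strictly increasing this forces $\tau_r^f\le\tau_r^g$.

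For the equality characterization, one direction is trivial: if $f=g$ a.e., then $\|f-t\|_r=\|g-t\|_r$ for every $t$, so $\tau_r^f=\tau_r^g$ by uniqueness. Conversely, suppose $\tau:=\tau_r^f=\tau_r^g$. Then
$$
0=\Psi_f(\tau)-\Psi_g(\tau)=\int_I\bigl[\varphi\bigl(\tau-f(x)\bigr)-\varphi\bigl(\tau-g(x)\bigr)\bigr]\,{\rm d}x\,.
$$
The integrand is non-negative almost everywhere on $I$ (as $\tau-f(x)\ge\tau-g(x)$ and $\varphi$ is non-decreasing), so it must vanish a.e. Strict monotonicity of $\varphi$ then forces $\tau-f(x)=\tau-g(x)$ a.e., that is, $f=g$ a.e.

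The proof offers no genuine obstacle; the only point worth double-checking is that the first-order characterization extracted from~\eqref{phid} genuinely identifies $\tau_r^f$ for every $f\in L^r(I)$ with $r>1$. This is immediate because $\phi_r^r$ is convex and differentiable (and strictly convex by Theorem~\ref{le2}(iii) unless $f$ is essentially constant, in which case $\tau_r^f$ is simply the essential value of $f$ and the claim is obvious).
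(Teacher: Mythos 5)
Your proof is correct. The paper states Proposition~\ref{mono} without proof (as a ``simple observation''), and your argument via the first-order condition \eqref{phid} --- strict monotonicity of $t\mapsto\int_I|f(x)-t|^{r-1}{\rm sgn}\bigl(t-f(x)\bigr)\,{\rm d}x$ combined with the pointwise comparison coming from $f\le g$ --- is exactly the intended one; your closing caveat about essentially constant $f$ is not even needed, since \eqref{phid} is strictly increasing in $t$ in that case as well, so the zero of the derivative characterizes $\tau_r^f$ uniformly.
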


\begin{lem}\label{le4}
Assume that $f,\ f_n\in L^{r_0}(I)$ for some $r_0>1$ and all $n\in\mathbb{N}$. If $\lim_{n\to\infty}f_n=f$ in $L^{r_0}(I)$, then $\lim_{n\to\infty}\tau_r^{f_n}=\tau_r^f$ locally uniformly on $]1,r_0]$.
\end{lem}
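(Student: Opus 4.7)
The plan is to fix an arbitrary compact $K\subset\ ]1,r_0]$ and prove that $\sup_{r\in K} |\tau_r^{f_n}-\tau_r^f |\to 0$ as $n\to\infty$. Writing $\phi_r^g(t):=\lambda(I)^{-1/r} \|g-t \|_r$ as in~\eqref{phi}, Theorem~\ref{le2}(iii) ensures that for every $g\in L^{r_0}(I)$ and every $r\in\ ]1,r_0]$ the function $\phi_r^g$ is strictly convex with unique minimizer $\tau_r^g$. The argument will combine two ingredients: a \emph{uniform smallness bound} for $\phi_r^{f_n}-\phi_r^f$ on all of $\mathbb{R}$, and a \emph{uniform strict-convexity bound} for $\phi_r^f$ as $r$ varies over $K$.

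The smallness bound is immediate from the reverse triangle inequality and H\"older's inequality (applicable because $I$ is bounded):
$$
 \bigl|\phi_r^{f_n}(t)-\phi_r^f(t) \bigr|\le\lambda(I)^{-1/r} \|f_n-f \|_r\le\lambda(I)^{-1/r_0} \|f_n-f \|_{r_0},\qquad\forall\,t\in\mathbb{R},\;r\in K\,,
$$
the right-hand side tending to $0$ uniformly in $(r,t)$ as $n\to\infty$. For the convexity bound, fix $\varepsilon>0$ and define
$$
\beta(r):=\min \bigl\{\phi_r^f (\tau_r^f-\varepsilon ),\,\phi_r^f (\tau_r^f+\varepsilon ) \bigr\}-\phi_r^f (\tau_r^f ),\qquad\forall\,r\in K\,.
$$
By Theorem~\ref{le2}(iii) the map $r\mapsto\tau_r^f$ is continuous, so $ \{\tau_r^f:r\in K \}\subset[-M,M]$ for some $M$; by the dominated convergence theorem (and boundedness of $I$), $(r,t)\mapsto\phi_r^f(t)$ is jointly continuous on $K\times\mathbb{R}$. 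Hence $\beta$ is continuous on $K$, and by strict convexity of each $\phi_r^f$ it is pointwise positive; compactness of $K$ then yields $\delta:=\inf_{r\in K}\beta(r)>0$. Convexity of $\phi_r^f$ upgrades this to $\phi_r^f(t)\ge\phi_r^f (\tau_r^f )+\delta$ for every $r\in K$ and every $t$ with $ |t-\tau_r^f |\ge\varepsilon$.

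Combining the two ingredients, pick $N$ so that $\lambda(I)^{-1/r_0} \|f_n-f \|_{r_0}<\delta/3$ for all $n\ge N$; then for every such $n$ and every $r\in K$,
$$
\phi_r^{f_n} (\tau_r^f\pm\varepsilon )>\phi_r^f (\tau_r^f\pm\varepsilon )-\delta/3\ge\phi_r^f (\tau_r^f )+2\delta/3>\phi_r^{f_n} (\tau_r^f )+\delta/3\,,
$$
and convexity of $\phi_r^{f_n}$ together with uniqueness of its minimizer forces $\tau_r^{f_n}\in\ ]\tau_r^f-\varepsilon,\tau_r^f+\varepsilon[$. Since $\varepsilon>0$ and $K$ were arbitrary, the asserted local uniform convergence follows. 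I expect the main obstacle to be the uniform strict-convexity bound: mere pointwise strict convexity of each $\phi_r^f$ is not enough, and one has to quantify how steeply $\phi_r^f$ rises away from its minimum uniformly in $r\in K$. This reduces to continuity of $\beta$ on $K$, which rests on both the continuity of $r\mapsto\tau_r^f$ (Theorem~\ref{le2}(iii)) and the joint continuity of $(r,t)\mapsto\phi_r^f(t)$ (dominated convergence together with boundedness of $I$).
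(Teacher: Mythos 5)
Your argument is correct, and it takes a genuinely different route from the paper's. The paper first shows that $\bigl(\tau_r^{f_n}\bigr)$ is uniformly bounded on $]1,r_0]$ and then argues by contradiction: it extracts subsequences $r_j\to r^*$ and $\tau_{r_j}^{f_{n_j}}\to\tau^*$, uses the triangle inequality to show $\|f-\tau^*\|_{r^*}\le\|f-\tau_{r^*}^f\|_{r^*}$, and invokes uniqueness of the minimizer (Theorem~\ref{le2}(iii)) at the single limit exponent $r^*$ to reach a contradiction with the continuity of $r\mapsto\tau_r^f$. You instead give a direct, quantitative stability-of-minimizers argument: the same H\"older bound $\lambda(I)^{-1/r}\|\cdot\|_r\le\lambda(I)^{-1/r_0}\|\cdot\|_{r_0}$ controls $\phi_r^{f_n}-\phi_r^f$ uniformly, and a compactness argument (resting on continuity of $r\mapsto\tau_r^f$ and joint continuity of $(r,t)\mapsto\phi_r^f(t)$, both of which the paper also uses) produces a uniform gap $\delta>0$ separating $\phi_r^f(\tau_r^f\pm\varepsilon)$ from the minimum, which convexity propagates to all $|t-\tau_r^f|\ge\varepsilon$. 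Your three-term inequality chain and the localization of the minimizer of the convex function $\phi_r^{f_n}$ are both sound (note that for the pointwise positivity of $\beta$ you only need uniqueness of the minimizer, not strict convexity of $\phi_r^f$ itself, and that convexity alone suffices for the remaining steps). What your approach buys is an explicit modulus of convergence in terms of $\delta$ and $\|f_n-f\|_{r_0}$ and no subsequence extraction, at the cost of having to establish the uniform gap; the paper's proof is shorter because the contradiction framework lets it work at a single accumulation point. Both rest on the same two pillars from Theorem~\ref{le2}(iii), so your proof is a legitimate alternative rather than a reduction to different machinery.
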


\begin{proof}
Since $f_n\to f$ in $L^{r_0}(I)$ and $I$ is bounded,
$\sup_{n\in\mathbb{N}} \|f_n \|_{r_0}<+\infty$ and, for all $r\in \, ]1,r_0 ]$ and $n\in\mathbb{N}$,
\[
\begin{split}
\left|\tau_r^{f_n}\right|=&\ \lambda(I)^{-1/r}\left\|\tau_r^{f_n}\right\|_r\le
\lambda(I)^{-1/r} (\left\|f_n-\tau^{f_n}_r\right\|_r+\|f_n\|_r )\\
\le&\ 2\lambda(I)^{-1/r} \|f_n \|_r\le2\lambda(I)^{-1/r_0} \|f_n \|_{r_0},
\end{split}
\]
by H\"{o}lder's inequality. This shows that $ (\tau_r^{f_n} )$ is
uniformly bounded on $]1,r_0]$.

Fix any $1<s<r_0$. To prove that
$\lim_{n\to \infty}\tau_r^{f_n}=\tau_r^f$ uniformly on
$ [s,r_0 ]$, suppose by way of contradiction that there exists
$\varepsilon_0>0$, a sequence $(r_j)$ in $ [s,r_0 ]$ and an
increasing sequence $(n_j)$ in $\mathbb{N}$ such
that 
$$
\left|\tau_{r_j}^f-\tau_{r_j}^{f_{n_j}}\right|\ge\varepsilon_0,\quad
\forall\ j\in\mathbb{N}\, .
$$ 
Assume w.o.l.g.\ that $r_j\to r^*$ and, by
the uniform boundedness of $ (\tau_r^{f_n} )$,
$\tau_{r_j}^{f_{n_j}}\to\tau^*\in\mathbb{R}$. Since $r\mapsto\tau_r^f$
is continuous at $r^*$, it follows that
\begin{equation}\label{4} 
|\tau_{r^*}^f-\tau^* |\ge\varepsilon_0\, .
\end{equation} 
On the other hand,
\[
\begin{split}
  \left\|f-\tau_{r_j}^{f_{n_j}}\right\|_{r_j}\le&\left\|f-f_{n_j}\right\|_{r_j}
  +\left\|f_{n_j}-\tau_{r_j}^{f_{n_j}}\right\|_{r_j}\le\left\|f-f_{n_j}\right
  \|_{r_j}
  +\left\|f_{n_j}-\tau_{r_j}^f\right\|_{r_j}\\
  \le&\ 2\left\|f-f_{n_j}\right\|_{r_j}+\left\|f-\tau_{r_j}^f\right\|_{r_j},
\end{split}
\] 
and letting $j\to\infty$ yields,
$\left\|f-\tau^*\right\|_{r^*}\le\left\|f-\tau_{r^*}^f\right\|_{r^*}$
since $(r,t)\mapsto \|f-t \|_r$ is continuous. 
By Theorem~\ref{le2}(iii), $\tau^*=\tau_{r^*}^f$, which clearly contradicts \eqref{4}.
\end{proof}

\begin{rem}
In Lemma~\ref{le4}, the convergence $\tau_r^{f_n}\to\tau_r^f$ in
general is not uniform on $ ]1,r_0 ]$. To see this, take for example
$I=[0,2]$ and $f_n=2\cdot{\bf 1}_{[1+2^{-n},2]}$ for all
$n\in\mathbb{N}$. With $f=2\cdot{\bf 1}_{[1,2]}$, clearly, 
$f, f_n\in L^{\infty}(I)$ and $\lim_{n\to\infty} f_n=f$ in $L^r(I)$ for
every $r\ge1$. Still, $\lim_{r\downarrow1}\tau_r^{f_n}=0$ for every
$n$, whereas $\tau_r^f=1$ for all $r>1$.
\end{rem}

Note that if $f:I\to\mathbb{R}$ is affine, i.e., $f(x)=ax+b$ for all
$x\in I$ and the appropriate $a, b\in\mathbb{R}$, then
$\tau_r^f=f\bigl(\frac{1}{2} (\min I+\max I ) \bigr)$ for all $r>1$. In this context,
Lemma~\ref{le4} can be given a slightly stronger, quantitative form.

\begin{prop}\label{quan}
Assume that $f:I\to\mathbb{R}$ is measurable, and let
$\xi=\frac{1}{2}(\min I+\max I)$. If, for some $a, b, c\in\mathbb{R}$, 
$$
 |f(x)-(ax+b) |\le c |x-\xi |,\quad \forall\ x\in I\, ,
$$
then $f\in L^{\infty}(I)$, and $\left|\tau_r^f-f(\xi)\right|\le\frac{1}{2}c\lambda(I)$ for every $r>1$.
\end{prop}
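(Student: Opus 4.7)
The plan is to sandwich $f$ between two constant translates of the affine function $g(x)=ax+b$ appearing in the hypothesis, then invoke the monotonicity of $\tau_r^{\bullet}$ from Proposition~\ref{mono}, and finally reduce the bound to the identity $\tau_r^g=g(\xi)$ noted immediately before the proposition for affine $g$.

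Setting $x=\xi$ in the hypothesis gives $|f(\xi)-(a\xi+b)|\le 0$, so $f(\xi)=g(\xi)=a\xi+b$. Since $|x-\xi|\le\tfrac{1}{2}\lambda(I)$ throughout $I$, the hypothesis upgrades to the uniform bound $|f(x)-g(x)|\le\tfrac{1}{2}c\lambda(I)$ on $I$, which in particular shows $f\in L^{\infty}(I)\subset L^r(I)$ for every $r\ge 1$. Equivalently, one has the pointwise sandwich
$$
g(x)-\tfrac{1}{2}c\lambda(I)\ \le\ f(x)\ \le\ g(x)+\tfrac{1}{2}c\lambda(I),\qquad\forall x\in I.
$$

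For $r>1$, Proposition~\ref{mono} applied to each inequality then yields $\tau_r^{g-\frac{1}{2}c\lambda(I)}\le\tau_r^f\le\tau_r^{g+\frac{1}{2}c\lambda(I)}$. Adding a constant $k$ to a function adds $k$ to its best $L^r$-constant (immediate from $\|h+k-t\|_r=\|h-(t-k)\|_r$), so these outer terms equal $\tau_r^g\pm\tfrac{1}{2}c\lambda(I)$. Since $g$ is affine, $\tau_r^g=g(\xi)=f(\xi)$ by the observation immediately preceding the proposition, and the desired bound $|\tau_r^f-f(\xi)|\le\tfrac{1}{2}c\lambda(I)$ follows. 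The only step worth checking is the translation-equivariance of $\tau_r$, which is immediate, so no part of the argument presents a real obstacle.
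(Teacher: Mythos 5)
Your proof is correct. The paper states Proposition~\ref{quan} without proof, and your argument --- upgrading the hypothesis to the uniform bound $|f-g|\le\frac{1}{2}c\lambda(I)$ via $|x-\xi|\le\frac{1}{2}\lambda(I)$, then combining the monotonicity of $\tau_r^{\bullet}$ from Proposition~\ref{mono} with translation equivariance and the identity $\tau_r^g=g(\xi)$ for affine $g$ noted just before the statement --- is exactly the natural route the surrounding text sets up, with all steps (including the observation that $x=\xi$ forces $f(\xi)=g(\xi)$) checking out.
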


The remainder of this section studies how, given $f$, the number
$\tau_r^f$ depends on $r$. First, this dependence is illustrated by an
example, where for simplicity $f\in L^{\infty}(I)$ is a non-decreasing step function.

\begin{example}\label{ex1}
Let $I=[0,8]$.

(i) Consider the function $f=(-4)\cdot{\bf 1}_{[0,1[}+4\cdot{\bf
  1}_{[5,8]}$, for which $B^f=\{0\}$, and clearly $0\le \tau_r^f \le
4$ for every $r>1$. By \eqref{phid}, 
\begin{equation}\label{000} 
(\tau_r^f+4 )^{r-1}+4 (\tau_r^f )^{r-1}=3 (4-
\tau_r^f )^{r-1}\, ,
\end{equation} 
and using \eqref{000}, it is readily deduced that
$\tau_{1+}^f:=\lim_{r\downarrow1}\tau_r^f=0$, 
but also $\tau_{\infty}^f:=\lim_{r\to+\infty}\tau_r^f=0$. On the other
hand, $\tau_2^f=1$, and hence $r\mapsto\tau_r^f$ is non-monotone; 
see Figure~\ref{f1}. Note that in order for $r\mapsto\tau_r^f$ to be non-monotone, a step function $f$ has to attain at least three different values.

(ii) Consider the function $f=(-a){\bf 1}_{[0,1[}+(-1) \cdot {\bf 1}_{[1,4[}+{\bf 1}_{[4,5[}+b
{\bf 1}_{[5,8]}$ with real parameters $a, b>1$. In this case,
$B^f=[-1,1]$, and \eqref{phid} yields, for every $r>1$, 
$$ 
(\tau_r^f+a )^{r-1}+3 (\tau_r^f+1 )^{r-1}= (1-\tau_r^f )^
{r-1}+3 (b-\tau_r^f )^{r-1}\, ,
$$ 
from which it is straightforward to deduce that $\tau_{1+}^f$ exists
and equals the unique real root of 
\begin{equation}\label{22}
g_{a,b}(\tau):= (3b+a+4 )\tau^3-3 (b^2+b-a-1 )\tau^2
+ (b^3+3b^2+3a+1 )\tau-b^3+a=0\, .
\end{equation} 
Given $\tau\in\, ]$$-1,1[$, note that
$\lim_{a\to+\infty}g_{a,b}(\tau)=+\infty$ for every $b>1$, and
$\lim_{b\to+\infty}g_{a,b}(\tau)=-\infty$  for every $a>1$.  
By the Intermediate Value Theorem, there exists
$\overline{a}=\overline{a}(\tau),\ \overline{b}=\overline{b}(\tau)$ 
such that $g_{\overline{a},\overline{b}}(\tau)=0$. Since the real root
of \eqref{22} is unique, $\tau^f_{1+}=\tau$. This shows that with
$a,b>1$ chosen appropriately, 
$\tau_{1+}^f$ can have any value in $]$$-1,1[$. Note that, similarly to (i), $\tau_{\infty}^f=\frac{1}{2}(b-a)$.
\end{example}

\begin{figure}[h] 
\psfrag{tv1}[r]{\small $1$}
\psfrag{tv05}[r]{\small $\frac12$}
\psfrag{tv0}[r]{\small $0$}
\psfrag{tv4}[r]{\small $4$}
\psfrag{tvm4}[r]{\small $-4$}
\psfrag{th1}[]{\small $1$}
\psfrag{th5}[]{\small $5$}
\psfrag{th8}[]{\small $8$}
\psfrag{ttau}[r]{$\tau_r^f$}
\psfrag{tr}[]{$r$}
\psfrag{tx}[]{$x$}
\psfrag{tfx}[r]{$f(x)$}
\psfrag{ti}[r]{$I = [0,8]$}
\begin{center}
\includegraphics{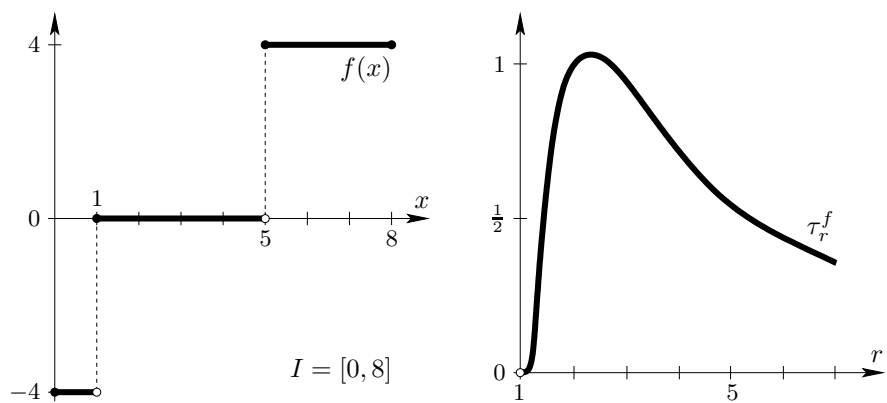}
\end{center}
\caption{For the (non-decreasing) function $f= (-4) \cdot {\bf 1}_{[0,1[}
  + 4 \cdot {\bf 1}_{[5,8]}$ the value of $\tau_r^f$ depends
  non-monotonically on $r$; see Example \ref{ex1}(i).}\label{f1}
\end{figure}

As seen in Example~\ref{ex1}, the number $\tau_r^f$ may depend on $r$
in a non-monotone way. In both cases considered, however, the limits
$\tau_{1+}^f=\lim_{r\downarrow1}\tau_r^f$ and
$\tau_{\infty}^f=\lim_{r\to+\infty}\tau_r^f$ exist. Also, by modifying
Example~\ref{ex1}(ii) appropriately, it is clear that, given any
compact interval $J\subset\mathbb{R}$ and any $\tau\in J$, one can
find $f\in L^{\infty}(I)$ with $B^f=J$ and $\tau_{1+}^f=\tau$. In
fact, one can choose $f$ to be a non-decreasing step function.

This section concludes with a demonstration that, just as in
Example~\ref{ex1}, $\tau_{1+}^f$ exists always (Theorem~\ref{thm1}),
whereas, unlike in Example~\ref{ex1}, $\tau_{\infty}^f$ may not 
exist (Example~\ref{ex3}).

\begin{theorem}\label{thm1} 
Assume that $f\in L^{r_0}(I)$ for some $r_0>1$. Then $\tau_{1+}^f$ exists, and $\tau_{1+}^f\in B^f$.
\end{theorem}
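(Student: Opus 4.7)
The plan proceeds in three stages. First, I establish boundedness of $(\tau_r^f)_{r \in (1,r_0]}$ so that subsequential limits exist as $r \downarrow 1$; this comes either from Theorem~\ref{le2}(i) or directly from the estimate in the proof of Lemma~\ref{le4}, and combined with the continuity of $r \mapsto \tau_r^f$ on $(1,r_0]$ (Theorem~\ref{le2}(iii)) makes the set of subsequential limits a non-empty compact connected subinterval of $\mathbb{R}$.

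Second, any subsequential limit $\tau^*$ lies in $B^f$: if $r_n \downarrow 1$ and $\tau_{r_n}^f \to \tau^*$, the map $(r,t) \mapsto \|f-t\|_r$ is jointly continuous on $[1,r_0]\times\mathbb{R}$ (dominated convergence, using boundedness of $I$ and $f\in L^{r_0}$); passing the minimality $\|f-\tau_{r_n}^f\|_{r_n} \le \|f-s\|_{r_n}$ to the limit gives $\|f-\tau^*\|_1 \le \|f-s\|_1$ for every $s$, so $\tau^*\in B^f$ by Theorem~\ref{le2}(ii).

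Third, and most substantively, I isolate the limit. Write $B^f=[a,b]$; the case $a=b$ is immediate from stage two, so assume $a<b$ and set $A=\{f\le a\}$, $B=\{f\ge b\}$. By Lemma~\ref{le5}(ii)--(iii), $\lambda(A)=\lambda(B)=\tfrac12\lambda(I)>0$ and $\lambda(\{a<f<b\})=0$. For $t\in(a,b)$ the derivative formula~\eqref{phid} reduces to
$$
H(t,r) \,:=\, \int_A (t-f)^{r-1}\,dx - \int_B (f-t)^{r-1}\,dx,
$$
which is strictly increasing in $t$ with unique zero at $\tau_r^f$ whenever $\tau_r^f\in(a,b)$. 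Subtracting $\lambda(A)-\lambda(B)=0$ and dividing by $r-1$, dominated convergence (justified via $|(y^{r-1}-1)/(r-1)| \le |\log y|\max(1,y^{r_0-1})$, integrable on $A$ and $B$ since $f\in L^{r_0}$ and $I$ is bounded) yields, for every $t\in(a,b)$,
$$
\lim_{r\downarrow 1}\frac{H(t,r)}{r-1} \;=\; L(t)\,:=\,\int_A \log(t-f)\,dx - \int_B \log(f-t)\,dx,
$$
and $L$ is strictly increasing on $(a,b)$. A trichotomy then finishes the argument. If $L$ has a zero $t^*\in(a,b)$, then for any $t_1<t^*<t_2$ in $(a,b)$, eventually $H(t_1,r)<0<H(t_2,r)$; since $H(\cdot,r)$ agrees on $(a,b)$ with the (strictly increasing) derivative of $\phi_r^r$, this forces $\tau_r^f\in(t_1,t_2)$, hence $\tau_r^f\to t^*$. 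If $L>0$ throughout $(a,b)$, then for each $t\in(a,b)$ eventually $\tau_r^f<t$, so $\limsup_{r\downarrow 1}\tau_r^f\le a$, which combined with stage two forces $\tau_r^f\to a$; the symmetric case $L<0$ gives $\tau_r^f\to b$.

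The main technical obstacle is the dominated-convergence passage isolating $L$ as the leading-order correction to the first-order condition: one must control the logarithmic singularity of $\log(t-f)$ (respectively $\log(f-t)$) against possibly unbounded $f$, which is handled by $f\in L^{r_0}$ together with $I$ bounded. Once $L$ is in hand, the trichotomy is a short verification.
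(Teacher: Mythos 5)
Your proposal is correct and follows essentially the same route as the paper: your $L$ is exactly the paper's auxiliary function $\Psi$, and the concluding trichotomy (interior zero, $L>0$ throughout, $L<0$ throughout) is identical, as is the preliminary step that all subsequential limits lie in $B^f$. The only cosmetic differences are that you pass to the limit in the first-order condition pointwise via dominated convergence (the paper uses the uniform estimate $|y^{r-1}-1-(r-1)\log y|\le (r-1)^2 e^{|\log y|}$) and that you invoke joint continuity of $(r,t)\mapsto\|f-t\|_r$ rather than monotonicity of the normalized norms $\phi_r$ in $r$.
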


\begin{proof}
We first show that 
\begin{equation}\label{13}
  [{\liminf}_{r\downarrow1}\tau_r^f,{\limsup}_{r\downarrow1}\tau_r^f
  ]\subset B^f\, ,
\end{equation}
and then that $\lim_{r\downarrow1}\tau_r^f$ exists.
For any $1<r\le r_0$, let $\phi_r$ be defined as in \eqref{phi}.
Recall that $\phi_r$ is convex, and $r\mapsto\phi_r(t)$ is continuous
and non-decreasing for any $t\in\mathbb{R}$. 
Assume that $r_n\downarrow1$ with $\tau_{r_n}^f\to\tau$. Then $\phi_1\left(\tau_{r_n}^f\right)\le\phi_{r_n}\left(\tau_{r_n}^f\right)\le
\phi_{r_n}(t)$, and hence $\phi_1(\tau)=\lim_{n\to\infty}\phi_1\left(\tau_{r_n}^f\right)\le\lim_{n\to
\infty}\phi_{r_n}(t)=\phi_1(t)$. Since $t\in\mathbb{R}$ was arbitrary, Theorem~\ref{le2}(ii) yields $\tau\in B^f$, which in turn establishes \eqref{13}.

It remains to show that $\lim_{r\downarrow1}\tau_r^f$ exists, which is
non-trivial only if $B^f$ is non-degenerate. In this case, define
$\Psi:\overset{\circ}{B^f}\to\mathbb{R}$ as  
$$
\Psi(t)=\int_{ \{f\le\min B^f \}}\log\bigl(t-f(x) \bigr) \, {\rm d}x-\int_{
  \{f\ge\max B^f \}}\log (f(x)-t )\, {\rm d}x,\quad \forall t\in
\overset{\circ}{B^f}\, .
$$
Note that $\Psi$ is well-defined and continuous. Moreover, if $t,
u\in B^f$ with $t<u$ then, as $B^f\neq I$, 
\[
  \Psi(t)-\Psi(u)=\int_{ \{f\le \min B^f
    \}}\log\frac{t-f(x)}{u-f(x)}\, {\rm d}x+\int_{ \{f\ge\max B^f
    \}}\log\frac{f(x)-u}{f(x)-t}\, {\rm d}x<0,
\] 
showing that $\Psi$ is increasing.
By \eqref{phid}, $t\mapsto\frac{\lambda(I)}{r}\frac{{\rm d}\phi^r_r(t)}{{\rm d}t}$ is a real-valued increasing function.
To compare the latter to $\Psi$, notice the elementary inequality
\begin{equation}\label{23}
   |y^{r-1}-1-(r-1)\log y |\le(r-1)^2e^{|\log y|},\quad \forall y>0, 1\le r\le2.
\end{equation}
With Lemma~\ref{le5} and \eqref{23}, for any fixed $0<\varepsilon<\min
\{1,\frac{1}{2}\lambda (B^f ) \}$, there exists $C_{\varepsilon}>0$ such that
\begin{equation}\label{16}
 \left| \frac{\lambda(I)}{r}\frac{{\rm d}\phi_r^r(t)}{{\rm d}t}-(r-1)\Psi(t)
 \right|
\le C_{\varepsilon}(r-1)^2,\quad \forall  1<r\le 2,\ t\in [\min B^f+\varepsilon,\max B^f-\varepsilon ].
\end{equation}
Since $\Psi$ is increasing, three cases may be distinguished:

\noindent
(i) $\Psi(\tau)=0$ for a unique $\tau\in\overset{\circ}{B^f}$. Pick
$\varepsilon>0$ so that $\min B^f+\varepsilon<\tau<\max
B^f-\varepsilon$. Then for every $\delta>0$, \eqref{16} implies
$\frac{{\rm d}\phi_r}{{\rm d}t} (\tau+\delta )>0$ and $\frac{{\rm
    d}\phi_r}{{\rm d}t} (\tau-\delta )<0$ for all $r>1$ sufficiently small.
It follows that $\tau_r^f\in[\tau-\delta,\tau+\delta]$ for all $r>1$
sufficiently small, and since $\delta>0$ was arbitrary, $\lim_{r\downarrow1}\tau_r^f=\tau$.

\noindent
(ii) $\Psi(\tau)>0$ for all $\tau\in\overset{\circ}{B^f}$. Similarly
to case (i), for every $\delta>0$, \eqref{16} yields $\frac{{\rm
    d}\phi_r}{{\rm d}t} (\min B^f+\delta )>0$ for all $r>1$
sufficiently small. This implies that $\tau_r^f<\min B^f+\delta$  for
all $r>1$ sufficiently small and hence
$\limsup_{r\downarrow1}\tau_r^f\le\min B^f$. 
By \eqref{13}, $\lim_{r\downarrow1}\tau_r^f=\min B^f$.

\noindent
(iii) $\Psi(\tau)<0$ for all $\tau\in\overset{\circ}{B^f}$. This case is completely analogous to (ii), with $\lim_{r\downarrow1}\tau_r^f=\max B^f$.
\end{proof}

\begin{cor}\label{co3}
Assume that $f:I\to\overline{\mathbb{R}}$ is non-decreasing, and $f\in L^{r_0}(I)$ for some $r_0>1$. Then $\tau_{1+}^f$ exists, and $\tau_{1+}^f\in Q_{\frac{1}{2}(\min I+\max I)}^{f^{-1}}$.
\end{cor}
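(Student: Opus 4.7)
The plan is to derive the corollary directly from Theorem~\ref{thm1} together with the identification of $B^f$ for monotone $f$ established in the remark following Lemma~\ref{le5}.

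First, I would invoke Theorem~\ref{thm1}: since $f \in L^{r_0}(I)$ with $r_0 > 1$, the limit $\tau_{1+}^f$ exists and satisfies $\tau_{1+}^f \in B^f$. The monotonicity of $f$ plays no role here---existence is already guaranteed in the general setting.

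Second, I would use the hypothesis that $f$ is non-decreasing to identify $B^f$ concretely. By Lemma~\ref{le5}, $B^f = Q^{\ell_f}_{\xi}$ where $\xi = \frac{1}{2}(\min I + \max I)$. For a non-decreasing $f$, Proposition~\ref{pro0}(iv) gives $\ell_f(t) = \frac{1}{2}\bigl(f^{-1}(t) + f^{-1}(t-)\bigr)$, so the quantile set $Q^{\ell_f}_\xi$ of $\ell_f$ at level $\xi$ coincides with the quantile set $Q^{f^{-1}}_\xi$ of $f^{-1}$ (this is exactly the content of the remark following Lemma~\ref{le5}, which in turn rests on Proposition~\ref{le1}(ii) applied to the non-decreasing right-continuous function $f^{-1}$). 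Thus $B^f = Q^{f^{-1}}_{\frac{1}{2}(\min I + \max I)}$.

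Combining the two steps yields $\tau_{1+}^f \in Q^{f^{-1}}_{\frac{1}{2}(\min I + \max I)}$, which is the desired conclusion. There is no genuine obstacle here---the corollary is essentially a transcription of Theorem~\ref{thm1} through the monotone-case identification of the balanced set, and the proof should be only a couple of lines long, citing Theorem~\ref{thm1} and the remark after Lemma~\ref{le5}.
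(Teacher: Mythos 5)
Your proposal is correct and is exactly the argument the paper intends: the corollary is stated without proof as an immediate consequence of Theorem~\ref{thm1} combined with the identification $B^f=Q^{f^{-1}}_{\frac{1}{2}(\min I+\max I)}$ for non-decreasing $f$ given in the remark following Lemma~\ref{le5}. Nothing further is needed.
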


Recall that in Example~\ref{ex1} the limit $\tau_{\infty}^f$ also
exists. This is a consequence of the fact that $f$ is bounded,
together with the following simple observation.

\begin{theorem}\label{th7}
Assume that $f\in \bigcap_{r\ge1} L^r(I)$. If $f^-\in L^{\infty}(I)$ or $f^+\in L^{\infty}(I)$, then $\lim_{r\to+\infty}\tau_r^f=\frac{1}{2} ({\rm essinf}_I f+{\rm esssup}_I f )$.
\end{theorem}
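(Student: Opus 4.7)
The plan is first to reduce to the single hypothesis $f^-\in L^\infty(I)$ via the symmetry $\tau_r^{-f}=-\tau_r^f$, which is immediate from $\|{-f}-t\|_r=\|f-(-t)\|_r$; this reduction works because $(-f)^-=f^+$ and negation interchanges ${\rm essinf}_I f$ with $-{\rm esssup}_I f$. Accordingly I set $m:={\rm essinf}_I f>-\infty$ and $M:={\rm esssup}_I f\in\mathbb{R}\cup\{+\infty\}$, and let $\phi_r$ be the auxiliary function from \eqref{phi}. Each $\phi_r$ is convex and, by Minkowski's inequality, $1$-Lipschitz on $\mathbb{R}$; the goal in both cases below is $\tau_r^f\to\tfrac12(m+M)$.

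\medskip

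\noindent\emph{Case $M<+\infty$.} Here $f\in L^\infty(I)$, and for every fixed $t\in\mathbb{R}$ standard $L^r$-theory on the bounded measure space $(I,\lambda)$ gives $\phi_r(t)\to\|f-t\|_\infty=\max(t-m,M-t)=:\phi_\infty(t)$ as $r\to+\infty$, with $\phi_\infty$ having unique minimizer $t^*=\tfrac12(m+M)$. By Theorem~\ref{le2}(i), $(\tau_r^f)\subset[m,M]$, so any subsequence admits a sub-subsequence $(\tau_{r_n}^f)$ with $\tau_{r_n}^f\to\tau^*\in[m,M]$. The $1$-Lipschitz bound $\bigl|\phi_{r_n}(\tau_{r_n}^f)-\phi_{r_n}(\tau^*)\bigr|\le|\tau_{r_n}^f-\tau^*|$ combined with pointwise convergence forces $\phi_{r_n}(\tau_{r_n}^f)\to\phi_\infty(\tau^*)$; together with $\phi_{r_n}(\tau_{r_n}^f)\le\phi_{r_n}(t)\to\phi_\infty(t)$ this yields $\phi_\infty(\tau^*)\le\phi_\infty(t)$ for every $t$, whence $\tau^*=t^*$ by uniqueness and therefore $\tau_r^f\to t^*$.

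\medskip

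\noindent\emph{Case $M=+\infty$.} The target is $\tau_r^f\to+\infty$, which I prove by contradiction: suppose $\tau_{r_n}^f\le K$ along some $r_n\to+\infty$. Setting the derivative in \eqref{phid} equal to zero at $t=\tau_{r_n}^f$ gives
\[
\int_{\{f<\tau_{r_n}^f\}}\bigl(\tau_{r_n}^f-f(x)\bigr)^{r_n-1}\,{\rm d}x=\int_{\{f>\tau_{r_n}^f\}}\bigl(f(x)-\tau_{r_n}^f\bigr)^{r_n-1}\,{\rm d}x.
\]
The left side is bounded by $\lambda(I)(K-m)^{r_n-1}$ since $f\ge m$ a.e., while for any $N>K$ the right side is at least $\lambda(\{f>N\})(N-K)^{r_n-1}$. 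Because $M=+\infty$, I can fix $N$ once and for all with $N-K>2(K-m)$ and $\lambda(\{f>N\})>0$. The two estimates then combine to $2^{r_n-1}\le\lambda(I)/\lambda(\{f>N\})$, an absurdity as $r_n\to+\infty$.

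\medskip

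The principal obstacle is the second case, in which the formal limit $\phi_\infty\equiv+\infty$ renders the convex-convergence argument of the first case useless; there, quantitative information about $\tau_r^f$ must be extracted directly from the first-order identity \eqref{phid}, and the crux is choosing the comparison threshold $N$ large enough compared with $K-m$ that the ratio of the two estimates grows geometrically in $r_n$, which is exactly what $M=+\infty$ makes possible.
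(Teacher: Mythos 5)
Your proof is correct; it reaches the same conclusion by a genuinely different organization. The paper runs a single, uniform derivative-sign argument: for any fixed $\tau<t<\frac{1}{2}(m+M)$ it bounds the two integrals in \eqref{phid} to show that $\frac{{\rm d}\phi_r^r}{{\rm d}\tau}(\tau)<0$ for all sufficiently large $r$, whence by convexity $\tau_r^f\ge\tau$ and $\liminf_{r\to+\infty}\tau_r^f\ge\frac{1}{2}(m+M)$, the upper estimate being symmetric (and vacuous when $M=+\infty$); no case distinction is made. You instead split on whether $M={\rm esssup}_I f$ is finite. In the bounded case you avoid \eqref{phid} altogether and argue via the pointwise convergence $\phi_r(t)\to\max(t-m,M-t)$ of the normalized norms, equi-Lipschitzness, and compactness of $[m,M]$ --- a cleaner, more conceptual route that exhibits the limiting variational problem whose unique minimizer is the midpoint. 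In the unbounded case your argument is essentially the contrapositive of the paper's: you impose the first-order condition at $\tau_{r_n}^f$ and extract a geometric-growth contradiction from the assumed bound $\tau_{r_n}^f\le K$, where the paper shows directly that the derivative stays negative below any finite level. (Two immaterial points: the symmetry $\tau_r^{-f}=-\tau_r^f$ and the first-order condition both require $r>1$ and $f$ not essentially constant, which hold here; and in the unbounded case you should take $K>m$, which costs nothing since $K$ may be enlarged, so that the ratio $(N-K)/(K-m)$ is well defined.) Each approach buys something: the paper's treats both cases at once, while yours isolates the $L^r\to L^\infty$ mechanism that actually drives the result.
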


\begin{proof}
Let $f$ be non-constant (otherwise, $r\mapsto\tau_r^f$ is constant,
too), and assume that $f^-\in L^{\infty}(I)$, that is, ${\rm
  essinf}_If>-\infty$. (The case $f^+\in L^{\infty}(I)$ is completely
analogous.) Let $u=\frac{1}{2} ({\rm essinf}_I f+{\rm esssup}_I f )$
for convenience, fix any ${\rm essinf}_I f<t<u$, and let
$\delta=t-{\rm essinf}_I f$. For $\tau<t$, note that $\tau-{\rm
  essinf}_I f<\delta$ and $\lambda ( \{f\ge\tau+\delta \} )>0$, 
and hence, with \eqref{phid},
\begin{align*}
\frac{\lambda(I)}{r\delta^{r-1}}\frac{{\rm d}\phi_r^r(\tau)}{{\rm
    d}\tau} &= \int_{ \{f<\tau \}}\left(\frac{\tau-f(x)}{\delta}\right)^{r-1}{
\rm d}x-\int_{ \{f>\tau \}}\left(\frac{f(x)-\tau}{\delta}\right)^{r-1}{\rm d}x\\
& \le \ \lambda(I)\left(\frac{\tau-{\rm essinf}_I f}{\delta}\right)^{r-1}-\int_{ \{f\ge\tau+\delta \}}\left(\frac{f(x)-\tau}
{\delta}\right)^{r-1}{\rm d}x\\
& \quad -\int_{ \{\tau\le f<\tau+\delta \}}\left(\frac{f(x)-\tau}{\delta}\right)^{r-1}{\rm d}x\\
&\le  \lambda(I)\left(\frac{\tau-{\rm essinf}_I f}{\delta}\right)^{r-1}-\lambda ( \{f\ge\tau+\delta \} )<0,
\end{align*}
for all sufficiently large $r$. Thus
$\liminf_{r\to+\infty}\tau_r^f\ge\tau$, and since $t$ and $\tau<t$
were arbitrary, $\liminf_{r\to+\infty}\tau_r^f\ge u$. 
A similar argument shows $\limsup_{r\to+\infty}\tau_r^f\le u$.
\end{proof}

\begin{cor}\label{co4}
If $f\in \bigcap_{r\ge 1}L^r(I)$ is non-decreasing and either
$f(\min I+)>-\infty$ or $f(\max I-)$ $<+\infty$, then
$\lim_{r\to+\infty}\tau_r^f=\frac{1}{2} \bigl( f(\min I+)+f(\max I-) \bigr)$.
\end{cor}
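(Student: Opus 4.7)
The plan is to derive Corollary~\ref{co4} as a direct specialization of Theorem~\ref{th7}, with the sole additional ingredient being the familiar identification, for non-decreasing $f$, of essential extrema with one-sided limits at the endpoints of $I$. Concretely, I would first establish that
\[
{\rm essinf}_I f = f(\min I +) \quad \text{and} \quad {\rm esssup}_I f = f(\max I -).
\]
For the first equality, monotonicity forces $f(x) \ge f(\min I +)$ for every $x \in \, ]\min I, \max I]$, so $\{f < f(\min I +)\} \subset \{\min I\}$ is $\lambda$-null; conversely, for any $\varepsilon > 0$ the very definition of the right-limit produces a right-neighborhood of $\min I$ of positive measure on which $f < f(\min I +) + \varepsilon$. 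The second equality is entirely symmetric.

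Next I would translate the hypothesis of the corollary into the hypothesis of Theorem~\ref{th7}. If $f(\min I +) > -\infty$, then the inequality $f \ge f(\min I +)$ on a set of full measure yields $f^- \le \max\{0, -f(\min I +)\}$ almost everywhere on $I$, so $f^- \in L^\infty(I)$; symmetrically, $f(\max I -) < +\infty$ implies $f^+ \in L^\infty(I)$. Either way, the hypotheses of Theorem~\ref{th7} are satisfied.

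Finally, applying Theorem~\ref{th7} and substituting the two identifications above yields
\[
\lim_{r \to +\infty} \tau_r^f \, = \, \tfrac{1}{2}\bigl({\rm essinf}_I f + {\rm esssup}_I f \bigr) \, = \, \tfrac{1}{2}\bigl(f(\min I +) + f(\max I -)\bigr),
\]
as claimed. When precisely one of the two one-sided limits is infinite, the equality is to be read in $\overline{\mathbb{R}}$, and the divergence is consistent with the bound $\tau_r^f \in [{\rm essinf}_I f, {\rm esssup}_I f]$ from Theorem~\ref{le2}(i). No step presents real difficulty; the only point requiring care is the pair of boundary identifications, which is where the monotonicity hypothesis on $f$ enters essentially.
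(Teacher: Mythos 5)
Your proposal is correct and matches the paper's (implicit) intent exactly: the corollary is stated without proof as an immediate consequence of Theorem~\ref{th7}, obtained precisely by identifying ${\rm essinf}_I f = f(\min I+)$ and ${\rm esssup}_I f = f(\max I-)$ for non-decreasing $f$ and noting that finiteness of a one-sided limit is equivalent to the corresponding boundedness hypothesis $f^\mp \in L^\infty(I)$. Your remark about reading the conclusion in $\overline{\mathbb{R}}$ when one limit is infinite is a sensible clarification consistent with the proof of Theorem~\ref{th7}.
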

The final example shows that, unlike in Example~\ref{ex1}, $\lim_{r\to+\infty}\tau_r^f$ may not exist if $f$ is unbounded.

\begin{example}\label{ex3}
\noindent
Consider the function $f:I\to\mathbb{R}$ given by
$$
f=\sum\nolimits_{n=0}^{\infty}2n(-1)^{n-1}{\bf 1}_{I_n}\, ,
$$
where $I:=\overline{ \bigcup_{n=0}^{\infty} I_n}$,
and $I_0,\ I_1,\ \cdots$ are pairwise disjoint, contiguous half-open
intervals, with $I_1$ to the right of $I_0$, and generally $I_{2n+1}$
immediately to the right of $I_{2n-1}$, as well as $I_{2n+2}$
immediately to the left of $I_{2n}$. (Clearly, $f$ is non-decreasing
on $\overset{\circ}{I}$.) The lengths $\lambda_n:=\lambda(I_n)>0$ will
be determined by induction shortly, subject to the requirement that
$\lambda_{n+1}\le\frac{1}{2}\lambda_n$ for all $n\ge0$.
Thus $I$ is a non-degenerate, closed interval of length
$\sum_{n\ge0}\lambda_n\le2\lambda_0$, and
$f\in\bigcap_{r\ge 1}L^r(I)$ but clearly $f\notin
L^{\infty}(I)$. For each $N\in\mathbb{N}$, let
$f_N=\sum_{n=0}^N2n(-1)^{n-1}{\bf 1}_{I_n}$ and note that
$\lim_{r\to+\infty}\tau_r^{f_N}=(-1)^{N-1}$, by Theorem~\ref{th7}. 
Moreover, 
\begin{equation}\label{0-1} \|f_{N+1}-f_N
  \|_r=2(N+1)\lambda_{N+1}^{1/r},\quad \forall  r>1,\ N\in\mathbb{N}\, .
\end{equation} 
Let $\lambda_0=1,\ r_0=1$, and assume that $\lambda_1,\
\lambda_2,\cdots,\lambda_N$ with
$\lambda_n\le\frac{1}{2}\lambda_{n-1}$ as well as $r_0<r_1<\cdots<r_N$
with $r_n\ge\max\{r_{n-1},n+1\}$ for $n=1,\cdots,N$ have been chosen
in such a way that, for every $1\le n\le N$, 
\begin{equation}\label{0-2} |\tau_{r_j}^{f_n}-(-1)^{j-1}
  |<2^{1-j}-2^{-n},\quad \forall 1\le j\le n\, .
\end{equation} 
For $N=1$, clearly such a choice is possible. By Lemma~\ref{le4} and
\eqref{0-1}, choosing $\lambda_{N+1}\le\frac{1}{2}\lambda_N$
sufficiently small guarantees that 
$$ 
|\tau_r^{f_{N+1}}-\tau_r^{f_N}
|<2^{-(N+1)},\quad \forall r\in [r_1,r_N ]\, ,
$$ 
and consequently
\begin{align*}
   |\tau_{r_j}^{f_{N+1}}-(-1)^{j-1} | & \le |\tau_{r_j}^{f_{N+1}}-\tau_{r_
  j}^{f_N} |+ |\tau_{r_j}^{f_N}-(-1)^{j-1} |\\
  & <  2^{-(N+1)}+2^{1-j}-2^{-N}=2^{1-j}-2^{-(N+1)},\quad \forall  j=1,\cdots,N.
\end{align*}
Also, choose $r_{N+1}\ge\max \{r_N,N+2 \}$ such
that $|\tau_{r_{N+1}}^{f_{N+1}}-(-1)^N |<2^{-(N+1)}$. Thus
\eqref{0-2} holds for $n=N+1$, and in fact for all $n\in\mathbb{N}$,
by induction. Furthermore, note that, given any $r>1$,
\[ 
\|f_N-f \|_r=\left(\sum\nolimits_{n>N}(2n)^r\lambda_n\right)^{1/r}\le2
\lambda_0^{1/r}\left(\sum\nolimits_{n>N}n^r2^{-n}\right)^{1/r} \!\!\! \to 0 \quad
\text{as}\ N\to\infty\, ,
\]
and so in particular $\lim_{N\to\infty} \|f_N-f \|_{r_j}=0$ for every
$j\in\mathbb{N}$. By Lemma~\ref{le4}, $ |\tau_{r_j}^{f_N}-\tau_{r_j}^f
|<2^{-j}$ for all sufficiently large $N$, which, 
together with \eqref{0-2}, yields $|\tau_{r_j}^f-(-1)^{j-1}
|<3\cdot2^{-j}$. Since $j\in\mathbb{N}$ was arbitrary and
$r_j\uparrow+\infty$, this shows that
$\liminf_{r\to+\infty}\tau_r^f\le-1$ and
$\limsup_{r\to+\infty}\tau_r^f\ge1$. On the other hand, using
\eqref{phid}, it is readily confirmed that $t\frac{{\rm d}}{{\rm d}t}
\|f-t \|_r^r>0$ for $t=\pm1$ and all $r>1$, and consequently $
|\tau_r^f |<1$. Thus $\liminf_{r\to+\infty}\tau_r^f=-1$ and $\limsup_{r\to+\infty}\tau_r^f=1$.
\end{example}

By modifying Example~\ref{ex3} appropriately, it is straightforward to establish
\begin{prop}
Given any (bounded) interval $I\subset\mathbb{R}$ and numbers
$-\infty\le a\le b\le+\infty$, there exists a non-decreasing function
$f\in \bigcap_{r\ge 1} L^r(I)$ such that
$\liminf_{r\to+\infty}\tau_r^f=a$ and $\limsup_{r\to+\infty}\tau_r^f=b$.
\end{prop}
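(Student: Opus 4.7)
The case $a=b$ is handled separately. If $a=b$ is finite, $f\equiv a$ does the job since $\tau_r^f=a$ for every $r>1$. If $a=b=+\infty$ (the case $-\infty$ being symmetric, using $f(x)=\log(x-\min I)$), take $f(x)=-\log(\max I-x)$; then $f$ is non-decreasing, lies in every $L^r(I)$, and an asymptotic analysis of \eqref{phid} (in the same spirit as the closing computation of Example~\ref{ex3}) yields $\lim_{r\to+\infty}\tau_r^f=+\infty$.

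For the main case $-\infty\le a<b\le+\infty$, I would mirror the inductive construction of Example~\ref{ex3}. Arrange pairwise disjoint half-open blocks $I_0,I_1,I_2,\ldots\subset I$ exactly as in that example, and choose real values $V_n$ assigned to $I_n$ so that each step function $f_N=\sum_{n=0}^N V_n{\bf 1}_{I_n}$ is non-decreasing on its support and its essinf--esssup midpoint $\mu_N:=\frac12\bigl(\min_{n\le N}V_n+\max_{n\le N}V_n\bigr)$ tends to $b$ along odd $N$ (interpreted as $\mu_N\to+\infty$ if $b=+\infty$) and to $a$ along even $N$ (interpreted as $\to-\infty$ if $a=-\infty$); this can be arranged by suitable (polynomial or exponential) growth of $|V_n|$. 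Since each $f_N$ is bounded, Corollary~\ref{co4} gives $\lim_{r\to+\infty}\tau_r^{f_N}=\mu_N$, so one selects inductively $r_N\uparrow+\infty$ with $|\tau_{r_N}^{f_N}-\mu_N|<1/N$. Then, exactly as in \eqref{0-1}--\eqref{0-2}, choose the widths $\lambda_n:=\lambda(I_n)$ small enough that $f:=\lim_{N\to\infty}f_N$ lies in $\bigcap_{r\ge1}L^r(I)$ and Lemma~\ref{le4} delivers $|\tau_{r_j}^f-\tau_{r_j}^{f_N}|<1/N$ for every $j\le N$. Consequently $\tau_{r_N}^f\to b$ (or $+\infty$) along odd $N$ and $\tau_{r_N}^f\to a$ (or $-\infty$) along even $N$, so $\limsup_{r\to+\infty}\tau_r^f\ge b$ and $\liminf_{r\to+\infty}\tau_r^f\le a$.

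It remains to obtain the reverse inequalities $\limsup\le b$ and $\liminf\ge a$. If both $a,b$ are finite, replace the constructed $f$ with the affine rescaling $\widetilde f:=\frac{b-a}{2}f_0+\frac{a+b}{2}$ of Example~\ref{ex3}'s function $f_0$: since $\tau_r^{\alpha g+\beta}=\alpha\tau_r^g+\beta$ for every $\alpha>0$ and $\beta\in\mathbb{R}$, and since $|\tau_r^{f_0}|<1$ for every $r>1$ by Example~\ref{ex3}, one has $\tau_r^{\widetilde f}\in(a,b)$ for all $r>1$. If $a=-\infty$ (resp.\ $b=+\infty$), the bound $\liminf\ge-\infty$ (resp.\ $\limsup\le+\infty$) is automatic. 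The only nontrivial subcase is the mixed one, say $a$ finite and $b=+\infty$: here $\liminf\tau_r^f\ge a$ is established by applying \eqref{phid} to show that, for every $\varepsilon>0$ and all sufficiently large $r$, $\frac{{\rm d}}{{\rm d}t}\|f-t\|_r^r<0$ at $t=a-\varepsilon$, and hence $\tau_r^f>a-\varepsilon$. The main obstacle is precisely this derivative estimate: the widths $\lambda_n$ must be tuned (beyond the constraints already imposed) so that the $L^r$-mass of $f$ to the right of $a-\varepsilon$ dominates the mass to its left uniformly as $r\to+\infty$, paralleling the closing computation of Example~\ref{ex3}.
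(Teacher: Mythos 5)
Your overall strategy is the one the paper has in mind (the paper offers no proof beyond ``modify Example~\ref{ex3} appropriately''), and most of your case analysis is sound. In fact, for finite $a<b$ your affine-rescaling observation $\tau_r^{\alpha g+\beta}=\alpha\tau_r^g+\beta$ ($\alpha>0$) disposes of that case completely and more cleanly than re-running the whole construction, since Example~\ref{ex3} already supplies $\liminf_{r\to+\infty}\tau_r^{f_0}=-1$, $\limsup_{r\to+\infty}\tau_r^{f_0}=1$ and $|\tau_r^{f_0}|<1$. The cases $a=b$ and $a=-\infty$, $b=+\infty$ are also fine (for the latter no reverse inequality is needed). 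One small citation slip: for the truncations $f_N$ you should invoke Theorem~\ref{th7} rather than Corollary~\ref{co4}, because $f_N$ vanishes on $I\setminus\bigcup_{n\le N}I_n$ and hence is not monotone on $I$; this also means $0$ must lie between $\min_{n\le N}V_n$ and $\max_{n\le N}V_n$ (e.g.\ take $V_0=0$) for your formula for $\mu_N$ to be the essinf--esssup midpoint.

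The one genuine gap is exactly the point you flag but do not carry out: in the mixed case (say $a$ finite, $b=+\infty$) the inequality $\liminf_{r\to+\infty}\tau_r^f\ge a$ is asserted to follow from a derivative estimate ``after tuning the widths,'' with no argument given. As written this is an unverified claim, and it is the only step of the whole proposition that requires real care. It can be closed without any extra tuning, by choosing the block values so that the pairing trick from the end of Example~\ref{ex3} applies verbatim: normalize $a=0$ and take $V_0=0$, $V_{2m}=-4^m$, $V_{2m+1}=4^{m+1}$ (so $\mu_{2m}=0$ and $\mu_{2m+1}=\tfrac32\cdot4^m\to+\infty$, and the required ordering of values is respected). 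Then for $0<\varepsilon<4$ and every $r>1$, \eqref{phid} gives
$$
\frac{\lambda(I)}{r}\frac{{\rm d}\phi_r^r(-\varepsilon)}{{\rm d}t}
=\sum\nolimits_{m\ge1}\lambda_{2m}(4^m-\varepsilon)^{r-1}
-\lambda_0\varepsilon^{r-1}-\sum\nolimits_{m\ge0}\lambda_{2m+1}(4^{m+1}+\varepsilon)^{r-1}<0\, ,
$$
because the term $\lambda_{2m}(4^m-\varepsilon)^{r-1}$ is dominated by $\lambda_{2m-1}(4^m+\varepsilon)^{r-1}$ thanks to the constraint $\lambda_{2m}\le\frac12\lambda_{2m-1}$ that is already part of the construction; hence $\tau_r^f>-\varepsilon$ for all $r>1$, which is what you need. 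Note that with exponentially growing $V_n$ the widths must decay super-exponentially (e.g.\ $\lambda_n\le4^{-n^2}$) to keep $f\in\bigcap_{r\ge1}L^r(I)$; this is harmless, as it can be absorbed into the inductive choice of $\lambda_{N+1}$, but it is an additional constraint you should state explicitly.
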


\section{Best constrained approximations}\label{sec5}

In this section, we apply results established in previous sections,
notably Lemma~\ref{le3} and Theorem~\ref{le2}, 
to investigate best constrained approximations of
$\mu\in\mathcal{P}_r$, i.e., approximations of $\mu$ by finitely
supported probabilities for which either locations
(Subsection~\ref{subloc}) or weights (Subsection~\ref{subwei}) are
prescribed. We establish existence of best constrained approximations 
and study their behaviour as the number of atoms goes to infinity. 
Finally, in Subsection~\ref{subbest} we relate these results to the
classical theory of best (unconstrained) approximations. The main
results of this section are Theorems \ref{th2}, \ref{th1}, \ref{th6}, \ref{low},
\ref{th5-1}, and \ref{slow}.

First, we fix a few notations specific to this section.
Given $n\in\mathbb{N}$, let $\Xi_n= \{{\bf x}\in\mathbb{R}^n: x_{ 1}\le\cdots\le x_{ n} \}$ and
$\Pi_n= \{{\bf p}\in\mathbb{R}^n: p_{ i}\ge0,\
\sum_{i=1}^np_{ i}=1 \}$. For any ${\bf x}\in\Xi_n$, the conventions
$x_{ 0}=-\infty$ and $x_{ n+1}=+\infty$ are adopted, and for any
${\bf p}\in\Pi_n$, let $P_{ i}=\sum_{j=1}^ip_{ j}$, $i=0,1,\cdots,n$; 
note that $P_{ 0}=0$ and $P_{ n}=1$. Given ${\bf x}\in\Xi_n$ and ${\bf
  p}\in\Pi_n$, let $\delta_{{\bf x}}^{{\bf p}} =\sum_{i=1}^np_{ i}\delta_{x_{ i}}$.
Throughout, usage of the symbol $\delta_{{\bf x}}^{{\bf p}} $
tacitly assumes that ${\bf x}\in\Xi_n,\ {\bf p}\in\Pi_n$, with $n\in\mathbb{N}$
either specified explicitly or else clear from the context.

\subsection{Best approximations with prescribed locations}\label{subloc}

Let $\mu\in\mathcal{P}_r$ for some $r\ge1$, and $n\in\mathbb{N}$. 
Given ${\bf x}\in \Xi_n$, call $\delta_{{\bf x}}^{{\bf p}} $ with
${\bf p}\in\Pi_n$ a {\em best $r$-approximation of $\mu$, given ${\bf x}$} if 
$$
d_r (\delta_{{\bf x}}^{{\bf p}} ,\mu )\le d_r (\delta_{{\bf x}}^{{\bf q}} ,\mu ),\quad \forall 
{\bf q}\in\Pi_n\, .
$$
Denote by $\delta_{{\bf x}}^{\bullet}$ any (possibly not unique) best
$r$-approximation of $\mu$, given ${\bf x}$. (Note that $\delta_{{\bf x}}^{\bullet}$
also depends on $r$. In the interest of readability, this dependence
is made explicit by a subscript only when necessary to avoid ambiguities.)

The existence of best $r$-approximations with prescribed locations can
be established using the results of Sections~\ref{Mono} and \ref{Lr}. 
\begin{theorem} \label{th2}
Assume that $\mu\in\mathcal{P}_r$ for some $r\ge1$, and
$n\in\mathbb{N}$. For every ${\bf x}\in \Xi_n$, there exists a best
$r$-approximation of $\mu$, given ${\bf x}$. Moreover,
$d_r\left(\delta_{{\bf x}}^{{\bf p}},\mu\right)=d_r\left(\delta_{{\bf x}}^{\bullet},\mu\right)$
with ${\bf p}\in\Pi_n$ if and only if, for every $i=1,\cdots,n$,
\begin{equation}\label{cl}
x_{ i}<x_{ i+1}\ \text{implies}\ P_{ i}\in
Q^{F^{-1}_{\mu}}_{\frac{1}{2} (x_{ i}+x_{ i+1} )}\, .
\end{equation}
\end{theorem}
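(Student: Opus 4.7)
The plan is to express $d_r(\delta_{{\bf x}}^{{\bf p}},\mu)$ as an $L^r$-distance on $\mathbb{I}$ between $F_{\mu}^{-1}$ and a step function of prescribed values, to split the resulting functional into a sum of \emph{single-variable} convex functions indexed by the distinct entries of ${\bf x}$, and to apply Lemma~\ref{le3} to each piece.

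By \eqref{0}, since $F_{\delta_{{\bf x}}^{{\bf p}}}^{-1}\equiv x_i$ on $[P_{i-1},P_i[$,
$$
d_r(\delta_{{\bf x}}^{{\bf p}},\mu)^r=\int_0^1\bigl|F_{\mu}^{-1}(t)-F_{\delta_{{\bf x}}^{{\bf p}}}^{-1}(t)\bigr|^r\,{\rm d}t=\sum_{i=1}^n\int_{P_{i-1}}^{P_i}|F_{\mu}^{-1}(t)-x_i|^r\,{\rm d}t\,.
$$
Continuity in ${\bf p}\in\Pi_n$ and compactness of $\Pi_n$ yield existence of a minimizer. Listing the distinct values of ${\bf x}$ as $y_1<\cdots<y_m$, setting $i_j=\max\{i:x_i=y_j\}$ and $q_j=P_{i_j}$ (so that $q_0=0,\,q_m=1$), the terms within each block combine, and the objective reduces to $D({\bf q})=\sum_{j=1}^m\int_{q_{j-1}}^{q_j}|F_{\mu}^{-1}(t)-y_j|^r\,{\rm d}t$, to be minimized over $0\le q_1\le\cdots\le q_{m-1}\le 1$.

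A short telescoping computation gives
$$
D({\bf q})=\int_0^1|F_{\mu}^{-1}(t)-y_m|^r\,{\rm d}t+\sum_{j=1}^{m-1}G_j(q_j),\quad G_j(q):=\int_0^q\bigl(|F_{\mu}^{-1}(t)-y_j|^r-|F_{\mu}^{-1}(t)-y_{j+1}|^r\bigr)\,{\rm d}t\,.
$$
Adding the constant $\int_0^1|F_{\mu}^{-1}(t)-y_{j+1}|^r\,{\rm d}t$ to $G_j$ yields exactly $\|F_{\mu}^{-1}-(y_j{\bf 1}_{[0,q[}+y_{j+1}{\bf 1}_{[q,1]})\|_r^r$, so Lemma~\ref{le3} (with $I=\mathbb{I}$, $a=y_j$, $b=y_{j+1}$, $f=F_{\mu}^{-1}$) identifies $\arg\min_{q\in\mathbb{I}}G_j=Q_{c_j}^{F_{\mu}^{-1}}$, where $c_j:=(y_j+y_{j+1})/2$.

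Since $c_1<\cdots<c_{m-1}$, Proposition~\ref{le1}(i) orders the sets $Q_{c_j}^{F_{\mu}^{-1}}$ from left to right, so any selection with $q_j\in Q_{c_j}^{F_{\mu}^{-1}}$ automatically satisfies the ordering constraint; the constrained and unconstrained minima of $D$ therefore coincide. Hence ${\bf p}$ is optimal iff $P_{i_j}\in Q_{c_j}^{F_{\mu}^{-1}}$ for every $j=1,\ldots,m-1$, which, after translating back to the original indexing (noting that the substantive indices in \eqref{cl} are precisely $i_1,\ldots,i_{m-1}$, with $c_j=(x_{i_j}+x_{i_j+1})/2$, and that the boundary case $i=n$ is automatic since $Q_{+\infty}^{F_{\mu}^{-1}}=\{1\}=\{P_n\}$), is exactly \eqref{cl}. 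The main obstacle is seeing how to reduce the multi-jump problem to the one-jump setting of Lemma~\ref{le3}; the telescoping identity above is the key step, after which the coupling in ${\bf q}$ vanishes and the ordering constraint becomes vacuous by monotonicity of $j\mapsto c_j$.
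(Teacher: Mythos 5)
Your proof is correct, but it takes a genuinely different route from the paper's. The paper argues necessity by a local perturbation: it varies only the two adjacent weights $p_i,p_{i+1}$ (hence only $P_i$), applies Lemma~\ref{le3} to the restriction $F_\mu^{-1}|_{[P_{i-1},P_{i+1}]}$, and must then treat the degenerate case $P_{i-1}=P_{i+1}$ (vanishing weights) by a separate, somewhat delicate case analysis; sufficiency is established by an explicit computation showing that every ${\bf p}$ satisfying \eqref{cl} yields the same distance as the particular choice $\widetilde P_i=\max A_i$. You instead prove the telescoping identity $D({\bf q})=\int_0^1|F_\mu^{-1}-y_m|^r+\sum_{j=1}^{m-1}G_j(q_j)$, which makes the objective \emph{separable} in the cumulative weights; Lemma~\ref{le3} then applies coordinate-wise to the full function on $\mathbb{I}$ (no restriction to subintervals), Proposition~\ref{le1}(i) shows the ordering constraint on ${\bf q}$ is never active, and necessity, sufficiency, and the degenerate zero-weight cases all fall out of the same separability statement with no extra work. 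Both proofs share the same skeleton (blocking equal $x_i$'s, Lemma~\ref{le3} as the engine, Proposition~\ref{le1}(i) for compatibility of the quantile sets, and the observation that the boundary condition at $i=n$ is automatic), and your compactness argument for existence is a valid alternative to the paper's deduction of existence from the characterization. What your version buys is brevity and uniformity; what the paper's buys is an explicit exhibition of how the distance stays constant as $P_i$ sweeps across $Q^{F_\mu^{-1}}_{\frac12(x_i+x_{i+1})}$, which your argument delivers only implicitly. The one point worth stating more carefully is the extension of $F_\mu^{-1}$ from $]0,1[$ to the closed interval $\mathbb{I}$ required by Lemma~\ref{le3} (and the resulting identification of $Q^{F_\mu^{-1}}_{+\infty}$), but this is a harmless convention that the paper also passes over.
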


\begin{proof}
For convenience, let
$A_i=Q^{F^{-1}_{\mu}}_{\frac{1}{2} (x_{ i}+x_{ i+1} )}$ for $0\le
i\le n$; note that $A_0=[-\infty,0],\ A_n=[1,+\infty]$, and every
$A_i$ is a compact (possibly one-point) interval, by
Proposition~\ref{le1}. Since the theorem trivially is correct for
$n=1$, henceforth assume $n\ge2$. We first establish \eqref{cl}, 
as the asserted existence of best $r$-approximations will follow directly from it.

Labelling ${\bf x}\in\Xi_n$ as 
\begin{equation}\label{x}
x_{ i_0+1}=\cdots=x_{ i_1}<x_{ i_1+1}=\cdots=x_{ i_2}<
x_{ i_2+1}=\cdots<\cdots<x_{ i_{m-1}+1}=\cdots=x_{ i_m}
\end{equation} 
with integers $j\le i_j\le n$ for $1\le j\le m\le n$, and $i_0=0$,
$i_m=n$, note first that $d_r(\delta_{{\bf x}}^{{\bf
    p}},\mu)=d_r(\delta_{\overline{{\bf x}}}^
{\overline{{\bf p}}},\mu )$, where $\overline{{\bf x}}\in\Xi_m$ and
$\overline{{\bf p}}\in\Pi_m$, with $\overline{x}_{ j}=x_{ i_j}$, and
$\overline{P}_{ j}=P_{ i_j}$ for $1\le j\le m$. 
Moreover, \eqref{cl} reduces to $\overline{P}_{ j}\in Q_{\frac{1}{2}(\overline{x}_{ j}+\overline{x}_{ j+1})}^{F_{\mu}^{-1}}$ for all $1\le j\le m-1$.
To establish \eqref{cl}, therefore, it can be assumed w.o.l.g. that $x_{ i}<x_{ i+1}$ for all $i$.

To prove that \eqref{cl} is necessary, let $\delta_{{\bf x}}^{{\bf p}}$ be a best
$r$-approximation of $\mu$, given $x$. Given any $1\le i\le n-1$, let
$\widetilde{{\bf p}}\in\Pi_n$ satisfy $\widetilde{p}_{ j}=p_{ j}$ for all
$j\neq i,i+1$, and $0\le\widetilde{p}_{ i}\le p_{ i}+p_{ i+1}$. Note
that $P_{ i-1}\le \widetilde{P}_{ i}\le P_{ i+1}$.

If $P_{ i-1}<P_{ i+1}$, then $d_r\left(\delta_{{\bf x}}^{{\bf p}},\mu\right)\le
d_r\left(\delta_{{\bf x}}^{\widetilde{{\bf p}}},\mu\right)$ implies
$$
 \|f_i- (x_{ i} {\bf 1}_{[P_{ i-1},P_{ i}[}+x_{ i+1} {\bf 1}
_{[P_{ i},P_{ i+1}]} ) \|_r\le
 \|f_i- (x_{ i}  {\bf 1}_{[P_{ i-1},\widetilde{P}_{ i}[}+x_{ i+1}
{\bf 1}_{[\widetilde{P}_{ i},P_{ i+1}]} ) \|_r \, ,
$$
with $f_i=F_{\mu}^{-1} |_{[P_{ i-1},P_{ i+1}]}$. Since
$\widetilde{P}_{ i}\in [P_{ i-1},P_{ i+1} ]$ was arbitrary,
Lemma~\ref{le3} and Proposition~\ref{le1} yield $P_{ i}\in Q_{\frac{1}{2} (x_{ i}+x_{ i+1} )}^{f_i}=A_i$.

If $P_{ i-1}=P_{ i+1}$, let $i^-$ and $i^+$ be the minimum and
maximum, respectively, of the (non-empty) set $ \{0\le j\le n:\
P_{ j}=P_{ i} \}$. Clearly, $0\le i^-\le i-1$, $i+1\le i^+\le n$, and
$i^+-i^-\ge2$. Assume first that $i^-=0$, in which case $i^+\le n-1$
and $P_{ i}=P_{ i^+}=0$. Lemma~\ref{le3}, applied to $f_{i^+}$ yields
$0\in A_{i^+}$. Recall that $A_i\subset\mathbb{I}$ and $\max
A_i\le\min A_{i^+}$, by Proposition~\ref{le1}. Thus $0\le\min
A_i\le\min A_{i^+}\le0$, and hence $0=P_{ i}\in A_i$. By a completely
analogous argument, the case of $i^+=n$, where $i^-\ge1$ and
$P_{ i}=P_{ i^-}=1$, leads to $1=P_{ i}\in A_i$. Finally, assume that
$1\le i^-<i^+\le n-1$. In this case, Lemma~\ref{le3}, applied to
$f_{i^-}$ and $f_{i^+}$ yields $P_{ i^-}\in A_{i^-}$ and $P_{ i^+}\in
A_{i^+}$, respectively. Thus $P_{ i}=P_{ i^-}=P_{ i^+}\in A_{i^-}\cap
A_{i^+}$. Since $j\mapsto\frac{1}{2} (x_{ j}+x_{ j+1} )$ is
increasing, Proposition~\ref{le1} implies that $A_i= \{P_{ i} \}$, and
hence trivially  $P_{ i}\in A_i$. This completes the proof that
\eqref{cl} holds whenever $d_r (\delta_{{\bf x}}^{{\bf p}},\mu )$ is minimal, i.e.,
\eqref{cl} is necessary.

To see that \eqref{cl} also is sufficient, let ${\bf p}\in\Pi_n$ satisfy
\eqref{cl} and consider $\widetilde{{\bf p}}\in\Pi_n$ with
$\widetilde{P}_i=\max A_i$ for all $i$. 
As $d_r (\delta_{{\bf x}}^{{\bf p}}, \mu)$ then has the same value for
every ${\bf p}$
satisfying (\ref{cl}), clearly, it is enough to show
that $d_r (\delta_{{\bf x}}^{{\bf p}},\mu )=d_r (\delta_{{\bf
    x}}^{\widetilde{{\bf p}}},\mu )$. 
To see the latter, note that by Proposition~\ref{le1}(i),
$P_{ i}\le\widetilde{P}_{ i}\le P_{ i+1}$, 
for all $1\le i\le n-1$, and  $ |x_{ i}-F_{\mu}^{-1}(t) |=
|x_{ i+1}-F_{\mu}^{-1}(t) |$ for all $P_{ i}<t<\widetilde{P}_{ i}$. 
Consequently,
\begin{align*}
  d_r\left(\delta_{{\bf x}}^{{\bf p}},\mu\right)^r & = \sum\nolimits_{i=1}^n\int_{P_{ i-1}}^{P_{ i}}\left
  |x_{ i}-F_{\mu}^{-1}(t)\right|^r{\rm d}t\\
  & = \sum\nolimits_{i=1}^n \left(\int_{P_{ i-1}}^{\widetilde{P}_{ i-1}}\left|x_{ i}-F_{\mu}
  ^{-1}(t)\right|^r{\rm d}t+\int^{P_{ i}}_{\widetilde{P}_{ i-1}}\left|x_{ i}-F_{\mu}^{-1}(t)\right|
  ^r{\rm d}t \right)\\
 & = \sum\nolimits_{i=1}^n \left(\int_{P_{ i-1}}^{\widetilde{P}_{ i-1}}\left|x_{ i-1}-F_{\mu}
^{-1}(t)\right|^r{\rm d}t+\int^{P_{ i}}_{\widetilde{P}_{ i-1}}\left|x_{ i}-F_{\mu}^{-1}(t)\right|^r
{\rm d}t \right) \\
  & = \sum\nolimits_{i=1}^n \left( \int_{P_{ i}}^{\widetilde{P}_{ i}}\left|x_{ i}-F_{\mu}^
  {-1}(t)\right|^r{\rm d}t+\int^{P_{ i}}_{\widetilde{P}_{ i-1}}\left|x_{ i}-F_{\mu}^{-1}(t)\right|^
  r{\rm d}t \right)\\
  & = \sum\nolimits_{i=1}^n\int^{\widetilde{P}_{ i}}_{\widetilde{P}_{ i-1}}\left|x_{ i}-
  F_{\mu}^{-1}(t)\right|^r{\rm d}t=d_r(\delta_{{\bf
    x}}^{\widetilde{{\bf p}}},\mu)^r.
\end{align*}
As indicated earlier, the asserted existence of a best
$r$-approximation of $\mu$, given $x$, is a direct consequence of
\eqref{cl}. Indeed, when $x\in\Xi_n$ is written as in \eqref{x},
Proposition~\ref{le1}(i) guarantees that the $m$ intervals
$A_{i_1-1},\ A_{i_2-1},\cdots,\ A_{i_m-1}\subset\mathbb{I}$ 
are arranged in such a way that $t\le u$ for all $t\in A_{i_j-1}$ and
$u\in A_{i_{j+1}-1}$. It is possible, therefore, to choose ${\bf p}\in\Pi_n$ satisfying \eqref{cl}.
\end{proof}

Given $\mu\in\mathcal{P}_r$ and ${\bf x}_n\in\Xi_n$ for all $n$, it is natural
to ask whether $d_r (\delta_{{\bf x}_n}^{\bullet},\mu )\to0$ as
$n\to\infty$. The following example illustrates that this may or may
not be the case.

\begin{example}\label{expe}
Let $\mu$ be the standard {\em exponential distribution} with
$F_{\mu}(x)=1-e^{-x}$ for all $x\ge0$. Note that
$\mu\in \bigcap_{r\ge 1} \mathcal{P}_r$. Given
${\bf x}_n=(1,2,\cdots,n)/\sqrt{n}\in\Xi_n$, Theorem~\ref{th2} yields a
unique best $r$-approximation of $\mu$, namely, $\delta_{{\bf
    x}_n}^{{\bf p}_n}$
with $P_{n,i}=F_{\mu} (\frac{2i+1}{2\sqrt{n}}
)=1-e^{-(2i+1)/(2\sqrt{n})}\ \text{for}\ 1\le i\le n-1$. It is readily
confirmed that $\lim_{n\to\infty}\sqrt{n}d_r (\delta_{{\bf x}_n}^{\bullet},\mu )=\frac{1}{2}
(r+1)^{-1/r}$ for every $r\ge1;$ in particular, therefore,
$\lim_{n\to\infty}d_r (\delta_{{\bf x}_n}^{{\bf p}_n},\mu )=0$. By contrast,
consider ${\bf y}_n=(0,2,\cdots,2n-2)\in\Xi_n$, for which
$\lim_{n\to\infty}d_r (\delta_{{\bf y}_n}^{\bullet},\mu )=d_r (\nu,\mu )>0$
with $\nu= (1-e^{-1} ) \delta_0+2\sinh1\sum_{i=1}^{\infty}e^{-2i}\delta_{2i}$. Note that
while every point in ${\rm supp}\ \mu=[0,+\infty]$ is the limit of an
appropriate sequence $ (x_{n,i_n} )$, this clearly is not the case for
$({\bf y}_n)$.
\end{example}

As Example~\ref{expe} suggests, a condition has to be imposed on
$({\bf x}_n)$, with ${\bf x}_n\in\Xi_n$ for all $n$, in order to
guarantee that $\lim_{n\to\infty}d_r (\delta_{{\bf x}_n}^{\bullet},\mu )=0$.

\begin{theorem}\label{thlo}
Assume that $\mu\in\mathcal{P}_r$ for some $r\ge1$, and ${\bf x}_n\in \Xi_n$
for every $n\in\mathbb{N}$. Then
$\lim_{n\to\infty}d_r (\delta_{{\bf x}_n}^{\bullet},\mu )=0$ if and only
if
\begin{equation}\label{loc}
\lim\nolimits_{n\to\infty} \min\nolimits_{1\le i\le
  n}|x-x_{n,i} |=0,\quad \forall  x\in \mathbb{R} \cap {\rm
  supp}\ \mu .
\end{equation}
In particular, \eqref{loc} holds whenever 
$$
\lim\nolimits_{n\to\infty} \bigl(F_{\mu} (x_{n,1} )+\max\nolimits_{1\le i\le
  n-1} (x_{n,i+1}-x_{n,i} )+1-F_{\mu} (x_{n,n} ) \bigr) =0\, .
$$
\end{theorem}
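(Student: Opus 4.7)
The plan is to prove the equivalence by constructing a concrete approximant in the sufficiency direction, exhibiting a quantitative lower bound in the necessity direction, and then deducing the ``in particular'' clause from a short case analysis.

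For sufficiency, assume \eqref{loc}. Let $T_n:\mathbb{R}\to\{x_{n,1},\ldots,x_{n,n}\}$ send each $y$ to (the smallest-index) nearest grid point, and let $\nu_n=\mu\circ T_n^{-1}$ be the image measure, which is supported on $\{x_{n,1},\ldots,x_{n,n}\}$. Since $(T_n(Y),Y)$ with $Y\sim\mu$ is a coupling of $(\nu_n,\mu)$, the Kantorovich bound together with the optimality from Theorem~\ref{th2} gives
$$
d_r\bigl(\delta_{{\bf x}_n}^{\bullet},\mu\bigr)^r\le d_r(\nu_n,\mu)^r\le\int_{\mathbb{R}}\min_{1\le i\le n}|y-x_{n,i}|^r\,{\rm d}\mu(y).
$$
By \eqref{loc}, the integrand tends to $0$ for every $y\in\mathbb{R}\cap{\rm supp}\,\mu$, hence $\mu$-a.e. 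To invoke dominated convergence, fix any $a\in\mathbb{R}\cap{\rm supp}\,\mu$; by \eqref{loc} there exist indices $j_n$ with $x_{n,j_n}\to a$, so $|x_{n,j_n}|\le|a|+1$ eventually, yielding
$$
\min_{1\le i\le n}|y-x_{n,i}|\le|y-x_{n,j_n}|\le|y|+|a|+1\, ,
$$
whose $r$-th power is $\mu$-integrable because $\mu\in\mathcal{P}_r$.

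For necessity, suppose \eqref{loc} fails: there exist $x^*\in\mathbb{R}\cap{\rm supp}\,\mu$, $\varepsilon>0$, and a subsequence $(n_k)$ with $\min_i|x^*-x_{n_k,i}|\ge\varepsilon$ for all $k$. Set $J=[x^*-\varepsilon/2,x^*+\varepsilon/2]$. For any ${\bf p}\in\Pi_{n_k}$, the quantile function $F^{-1}_{\delta^{\bf p}_{{\bf x}_{n_k}}}$ takes values a.e.\ in $\{x_{n_k,1},\ldots,x_{n_k,n_k}\}$, each at distance $\ge\varepsilon/2$ from $J$. Using the identity $\lambda\bigl(\{t\in\,]0,1[\,:\,F_\mu^{-1}(t)\in A\}\bigr)=\mu(A)$ for Borel $A\subset\mathbb{R}$,
$$
d_r\bigl(\delta_{{\bf x}_{n_k}}^{\bullet},\mu\bigr)^r\ge\int_{\{F_\mu^{-1}\in J\}}(\varepsilon/2)^r\,{\rm d}t=(\varepsilon/2)^r\mu(J)>0\, ,
$$
where $\mu(J)>0$ since $(x^*-\varepsilon/2,x^*+\varepsilon/2)$ is an open neighbourhood of $x^*\in{\rm supp}\,\mu$. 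This precludes $d_r$-convergence.

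For the ``in particular'' clause, assume the three displayed conditions and fix $x\in\mathbb{R}\cap{\rm supp}\,\mu$ and $\delta>0$. If $x_{n,1}\le x\le x_{n,n}$, then $x\in[x_{n,i},x_{n,i+1}]$ for some $i$, so $\min_j|x-x_{n,j}|\le\tfrac12\max_j(x_{n,j+1}-x_{n,j})\to0$. If $x>x_{n,n}$, then $\mu(\,]x-\delta,+\infty])\ge\mu(\,]x-\delta,x+\delta[\,)>0$, so $F_\mu(x-\delta)<1$; combined with $F_\mu(x_{n,n})\to1$ this forces $x_{n,n}>x-\delta$ for large $n$, hence $|x-x_{n,n}|<\delta$. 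The case $x<x_{n,1}$ is symmetric, using $F_\mu(x_{n,1})\to0$. The main obstacle lies in the sufficiency step: Voronoi cells and grid points can both be unbounded, so no naive majorisation works, and the crucial device is to anchor the dominant to a fixed $a\in{\rm supp}\,\mu$ whose grid-approximants are automatically bounded by \eqref{loc}.
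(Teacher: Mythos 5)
Your proof is correct, and the sufficiency direction takes a genuinely different route from the paper's. The paper works entirely on the quantile side: it shows $F^{-1}_{\delta^{{\bf p}_n}_{{\bf x}_n}}\to F_\mu^{-1}$ a.e.\ on $\mathbb{I}$ via the identity $|F_\mu^{-1}(t)-f_n^{-1}(t)|=\min_j|F_\mu^{-1}(t)-x_{n,j}|$, applies dominated convergence when ${\rm supp}\ \mu$ is bounded, and then handles unbounded support by a separate truncation step (approximating $\mu$ by a compactly supported $\nu$ within $\varepsilon$ in $d_r$). You instead work in physical space with the nearest-point projection $T_n$ and the coupling bound $d_r(\mu\circ T_n^{-1},\mu)^r\le\int\min_i|y-x_{n,i}|^r\,{\rm d}\mu(y)$, and your device of anchoring the dominating function to a fixed $a\in\mathbb{R}\cap{\rm supp}\ \mu$ (so that $\min_i|y-x_{n,i}|\le|y|+|a|+1$ eventually) disposes of bounded and unbounded support in a single application of dominated convergence --- arguably cleaner than the paper's two-case argument. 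The price is that you invoke the primal (coupling) characterization of $d_r$, equivalently $\min_{{\bf p}}d_r(\delta_{{\bf x}}^{{\bf p}},\mu)^r=\int\min_i|y-x_i|^r\,{\rm d}\mu$, which the paper does not derive from its definition \eqref{0} but does use elsewhere with citations to \cite[Lemma~3.1]{GL} and \cite{CGI}, so this is within the paper's toolbox. Your necessity argument is the same idea as the paper's (the approximant is constant on a neighbourhood of the missing support point while $\mu$ is not), merely executed on the quantile side via the pushforward identity $\lambda(\{F_\mu^{-1}\in A\})=\mu(A)$ rather than via the $d_1$ distribution-function formula. Finally, you supply a proof of the ``in particular'' clause, which the paper states without proof; your three-case analysis is correct (just note explicitly that for each fixed large $n$ exactly one of the three cases applies and each yields $\min_j|x-x_{n,j}|<\delta$).
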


\begin{proof}
For convenience, let $P_{n,i}=F_{\mu} \bigl(\frac{1}{2}
(x_{n,i}+x_{n,i+1} ) \bigr)$ for all $n\in\mathbb{N}$ and $0\le i\le n$, as well as $A=\mathbb{I}\setminus \{P_{n,i}:\ n\in\mathbb{N},\ 0\le i\le n \}$
and $f_n=F_{\delta_{{\bf x}_n}^{{\bf p}_n}}$. Note that  $
|F_{\mu}^{-1}(t)-x_{n,i} |=\min\nolimits_{1\le j\le n}  |F_{\mu}^{-1}(t)-x_{n,j} |$ whenever $P_{n,i-1}<t<P_{n,i}$, and hence
\begin{equation}\label{de}  
|F_{\mu}^{-1}(t)-f_n^{-1}(t) |=\min\nolimits_{1\le j\le n}
|F_{\mu}^{-1}(t)-x_{n,j} |,\quad \forall  t\in A.
\end{equation}

We first show that \eqref{loc} is necessary. To see this, assume that
\eqref{loc} fails. Then, with the appropriate $\varepsilon>0,\
x\in{\rm supp}\ \mu$ and sequence $(n_k)$, 
$$
\min\nolimits_{1\le i\le n_k} |x-x_{n_k,i} |\ge2\varepsilon,\quad
\forall 
k\in\mathbb{N}.
$$
Since $f_n$ is constant on $ [x-\varepsilon,x+\varepsilon ]$ whereas
$F_{\mu}$ is not,  
$$
d_1 (\delta_{{\bf x}_{n_k}}^{\bullet},\mu )=d_1 (\delta_{{\bf
    x}_{n_k}}^{{\bf p}_{n_k}},
\mu )\ge\min\nolimits_{c\in\mathbb{R}}\int_{x-\varepsilon}^{x+\varepsilon} |F_{\mu}
(y)-c |\, {\rm d}y>0,\quad \forall k\in\mathbb{N},
$$
and so $\limsup_{n\to\infty}d_r (\delta_{{\bf x}_n}^{\bullet},\mu )>0$ as well.

To see that \eqref{loc} also is sufficient, note first that if
$F_{\mu}^{-1}$ is continuous at $t\in A$, then $F_{\mu}^{-1}(t)\in{\rm
  supp}\ \mu$, and hence $f_n^{-1}(t)\to F_{\mu}^{-1}(t)$, by
\eqref{de}. Since $F_{\mu}^{-1}$ is monotone, $f_n^{-1}\to
F_{\mu}^{-1}$ a.e. on $\mathbb{I}$. If ${\rm supp}\ \mu$ is bounded
then $f_n^{-1}\to F_{\mu}^{-1}$ in $L^r(\mathbb{I})$, by the Dominated
Convergence Theorem, i.e., $\lim_{n\to\infty}
d_r\left(\delta_{{\bf x}_n}^{{\bf p}_n},\mu\right)=0$, and thus $\lim_{n\to\infty}
d_r\left(\delta_{{\bf x}_n}^{\bullet},\mu\right)=0$. If, on the other hand,
${\rm supp}\ \mu$ is unbounded, then, given any $\varepsilon>0$,
choose $\nu\in\mathcal{P}$ with bounded support and $d_r (\mu,\nu
)<\varepsilon$. Then $d_r (\delta_{{\bf x}_n}^{\bullet},\mu )\le d_r
(\widetilde{\delta}_{{\bf x}_n}^{\bullet},\mu )\le d_r
(\widetilde{\delta}_{{\bf x}_n}^{\bullet},\nu )+d_r(\nu,\mu)$, where
$\widetilde{\delta}_{{\bf x}_n}^{\bullet}$ denotes a best $r$-approximation
of $\nu$, given ${\bf x}_n$. By the above, $\limsup_{n\to\infty}d_r
(\delta_{{\bf x}_n}^{\bullet},\mu )\le\varepsilon$, and since
$\varepsilon>0$ was arbitrary, 
$\lim_{n\to\infty}d_r (\delta_{{\bf x}_n}^{\bullet},\mu )=0$.
\end{proof}

\begin{example} \label{sq}
Let $\mu$ be the ${\rm Beta}(2,1)$ distribution, i.e.,
$F_{\mu}(x)=x^2$ for all $x\in\mathbb{I}$, and consider
${\bf x}_n=(1,\sqrt{2},\cdots,\sqrt{n})/\sqrt{n}\in\Xi_n$. By
Theorem~\ref{thlo}, $\lim_{n\to\infty}d_r
(\delta_{{\bf x}_n}^{\bullet},\mu )=0$ for every $r\ge1$. Unlike in
Example~\ref{expe}, however, the rate of convergence depends on
$r$: With $\alpha_r=\frac{1}{2}+\frac{1}{\max\{2,r\}}$ and the
appropriate $0<C_r<+\infty$, 
$$
\lim\nolimits_{n\to\infty}n^{\alpha_r}d_r
   (\delta_{{\bf x}_n}^{\bullet},\mu )=C_r
$$ 
whenever $r\neq2$, whereas 
$$
\lim\nolimits_{n\to\infty}\frac{n}{\sqrt{\log n}}\, d_2
(\delta_{{\bf x}_n}^{\bullet},\mu )=\frac{1}{4\sqrt{3}}\, .
$$
Thus $ \bigl(d_r (\delta_{{\bf x}_n}^{\bullet},\mu ) \bigr)$ decays like
$ (n^{-\alpha_r} )$ and $ ( n^{-1}\sqrt{\log n} )$ if $r\neq2$
and $r=2$, respectively.
\end{example}

\subsection{Best approximations with prescribed weights}\label{subwei}

Let $\mu\in\mathcal{P}_r$ for some $r\ge1$, and
$n\in\mathbb{N}$. Given ${\bf p}\in\Pi_n$, call $\delta_{{\bf
    x}}^{{\bf p}}$ with ${\bf x} \in
\Xi_n$ a {\em best $r$-approximation of $\mu$, given ${\bf p}$}  if 
$$
d_r
(\delta_{{\bf x}}^{{\bf p}},\mu )\le d_r (\delta_{{\bf y}}^{{\bf
    p}},\mu ),\quad \forall {\bf  y} \in \Xi_n.
$$
Denote by $\delta_{\bullet}^{{\bf p}}$ any best $r$-approximation of $\mu$,
given ${\bf p}$. (Again in the interest of readability, the $r$-dependence
of $\delta^{\bf p}_{\bullet}$ is made explicit by a subscript only when
necessary to avoid ambiguity.) An important special case of
${\bf p} \in\Pi_n$ is the uniform probability vector
${\bf u}_n=(1,\cdots,1)/n$. Best $r$-approximations of $\mu$, given ${\bf u}_n$,
will be referred to as best {\em uniform} $r$-approximations, and
denoted $\delta_{\bullet}^{{\bf u}_n}$. As in the case of prescribed
locations studied in Subsection~\ref{subloc}, the existence of best
$r$-approximations with prescribed weights follows from results in
Sections~3 and 4. Due to the nature of \eqref{0}, the proof of the
following theorem even is simpler than that of its counterpart, 
Theorem~\ref{th2}.

\begin{theorem}\label{th1}
Assume that $\mu\in\mathcal{P}_r$ for some $r\ge1$, and
$n\in\mathbb{N}$. 
For every ${\bf p} \in\Pi_n$, there exists a best $r$-approximation of
$\mu$, given ${\bf p}$. Moreover, $d_1\left(\delta_{{\bf x}}^{{\bf p}},\mu\right)=d_1\left(\delta^{\bf p}_{\bullet},\mu\right)$ if and only if, for every $i=1,\cdots,n$,
\begin{equation}\label{w1}
P_{ i-1}<P_{ i}\  \text{implies}\ x_{ i}\in
Q^{F_\mu}_{\frac{1}{2}(P_{ i-1}+P_{ i})},
\end{equation} 
and for $r>1$,
$d_r\left(\delta_{{\bf x}}^{{\bf p}},\mu\right)=d_r\left(\delta^{\bf p}_{\bullet},\mu\right)$
if and only if, for every $i=1,\cdots,n$,
\begin{equation}\label{w2}
P_{ i-1}<P_{ i}\ \text{implies}\ x_{ i}=\tau_r^{f_i},\ \text{where}\
f_i=F_{\mu}^{-1}\left|_{ [P_{ i-1},P_{ i} ]}\right. .
\end{equation}
\end{theorem}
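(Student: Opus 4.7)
The proof is substantially simpler than that of Theorem~\ref{th2} because the Kantorovich distance, expressed via quantile functions as in~\eqref{0}, decomposes additively over the prescribed weight intervals. My plan is first to reduce the $n$-variable minimization to $n$ independent one-variable problems of the type already solved in Section~\ref{Lr}, then to apply Corollary~\ref{co2} to each, and finally to verify monotonicity of the minimizer and reconcile the $r=1$ characterization \eqref{w1} (involving $Q^{F_\mu}$) with the one directly produced by Corollary~\ref{co2}(ii) (involving $Q^{f_i^{-1}}$).

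The starting point is that the quantile function of $\delta_{\mathbf{x}}^{\mathbf{p}}$ equals $x_i$ on $[P_{i-1},P_i[$, so
\begin{equation*}
d_r(\delta_{\mathbf{x}}^{\mathbf{p}},\mu)^r=\sum_{i=1}^n\int_{P_{i-1}}^{P_i}|x_i-F_\mu^{-1}(t)|^r\,{\rm d}t=\sum_{i:\,P_{i-1}<P_i}\|f_i-x_i\|_r^r,
\end{equation*}
where $f_i=F_\mu^{-1}|_{[P_{i-1},P_i]}$. The crucial point is that distinct terms involve distinct variables, so the sum can be minimized termwise. For every $i$ with $P_{i-1}<P_i$, the function $f_i$ is non-decreasing and belongs to $L^r([P_{i-1},P_i])$, so Corollary~\ref{co2} applies: a minimizer $\tau_r^{f_i}\in\mathbb{R}$ exists, unique for $r>1$, which immediately gives existence of $\delta^{\mathbf{p}}_\bullet$ as well as \eqref{w2}. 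Indices $i$ with $P_{i-1}=P_i$ contribute $0$ to the sum, so $x_i$ may be chosen freely among values compatible with the monotonicity constraint $\mathbf{x}\in\Xi_n$.

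To see that the componentwise minimizers automatically satisfy this monotonicity constraint (so that no joint constraint complicates the argument), I use Corollary~\ref{co2}(i): for any active index $i$, $\tau_r^{f_i}\in[f_i(P_{i-1}+),f_i(P_i-)]=[F_\mu^{-1}(P_{i-1}+),F_\mu^{-1}(P_i-)]$. Consecutive active indices $i<j$ (with intermediate inactive indices if any) then give $\tau_r^{f_i}\le F_\mu^{-1}(P_i-)\le F_\mu^{-1}(P_{j-1}+)\le \tau_r^{f_j}$ by monotonicity of $F_\mu^{-1}$, and the inactive $x_k$ in between can be set to any value in $[\tau_r^{f_i},\tau_r^{f_j}]$. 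Hence $\mathbf{x}\in\Xi_n$.

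The main (mild) obstacle is reconciling \eqref{w1} with the conclusion of Corollary~\ref{co2}(ii), which a priori describes the set of $r=1$ minimizers as $Q^{f_i^{-1}}_{(P_{i-1}+P_i)/2}$, not $Q^{F_\mu}_{(P_{i-1}+P_i)/2}$. The bridge is the duality in Proposition~\ref{le1}(ii) applied to the non-decreasing function $F_\mu^{-1}$: one has $x\in Q_s^{F_\mu^{-1}}$ if and only if $s\in Q_x^{(F_\mu^{-1})^{-1}}$, and $(F_\mu^{-1})^{-1}$ coincides with $F_\mu$ (which is right-continuous). Since $f_i$ is the restriction of $F_\mu^{-1}$ to $[P_{i-1},P_i]$ and $(P_{i-1}+P_i)/2\in\,]P_{i-1},P_i[$, it is routine to check that the quantile set $Q^{f_i^{-1}}_{(P_{i-1}+P_i)/2}$ equals $Q^{F_\mu}_{(P_{i-1}+P_i)/2}$ (the boundary values $\pm\infty$ that the extended inverse $f_i^{-1}$ assigns outside $[f_i(P_{i-1}+),f_i(P_i-)]$ play no role because the $x_i$ minimizing $\|f_i-x_i\|_1$ must lie in this interval by Corollary~\ref{co2}(i)). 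This gives \eqref{w1}, completing the proof.
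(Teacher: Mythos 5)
Your proof is correct and follows essentially the same route as the paper's: decompose $d_r(\delta_{\bf x}^{\bf p},\mu)^r$ into the independent terms $\|f_i-x_i\|_r^r$, minimize termwise via Corollary~\ref{co2}, and identify $Q^{f_i^{-1}}_{\frac12(P_{i-1}+P_i)}$ with $Q^{F_\mu}_{\frac12(P_{i-1}+P_i)}$ through the duality of Proposition~\ref{le1}(ii). You are in fact slightly more explicit than the paper in verifying that the componentwise minimizers respect the ordering constraint ${\bf x}\in\Xi_n$ (the paper instead disposes of degenerate indices by a relabelling step); the only blemish is the remark about $f_i^{-1}$ taking values $\pm\infty$, which is inaccurate under the paper's conventions ($f_i^{-1}$ takes values in $[P_{i-1},P_i]$) but harmless to the argument.
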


\begin{proof}
As in the proof of Theorem~\ref{th2}, existence follows immediately,
once \eqref{w1} and \eqref{w2} are established. Labelling ${\bf P}$ as 
$$
P_{ i_0}=\cdots=P_{ i_1-1}<P_{ i_1}=\cdots=P_{ i_2-1}<P_{ i_2}=\cdots<
\cdots<P_{ i_{m-1}}=\cdots=P_{ i_m-1}
$$ 
with integers $j\le i_j\le
n+1$ for $1\le j\le m\le n$, and $i_0=0$, $i_m=n+1$, 
note that $d_r\left(\delta_{{\bf x}}^{{\bf
      p}},\mu\right)=d_r\left(\delta_{\overline{{\bf
        x}}}^{\overline{{\bf p}}},
\mu\right)$, where $\overline{{\bf x}}\in\Xi_m$ and $\overline{{\bf p}}\in\Pi_m$,
with $\overline{x}_{ j}=x_{ i_j}$, and $\overline{P}_{ j}=P_{ i_j}$
for $1\le j\le m$. Moreover, \eqref{w1} reduces to
$\overline{x}_{ j}\in Q_{\frac{1}{2}
  (\overline{P}_{ j-1}+\overline{P}_{ j} )}^{F_{\mu}}$ 
for all $1\le j\le m$, whereas \eqref{w2} reduces to
$\overline{x}_{ j}=\tau_r^{f_j}$ with $f_j=F_{\mu}^{-1}\left|_{
    [\overline{P}_{ j-1},\overline{P}_{ j} ]}\right.$.
Thus, to establish \eqref{w1} and \eqref{w2}, it can be assumed
w.o.l.g. that $P_{ i-1}<P_{ i}$ for all $i$.

Given ${\bf p}\in\Pi_n$, it is clear from $d_r (\delta_{{\bf x}}^{{\bf p}},\mu )^r=\sum_{i=1}^n \|x_{ i}-f_i \|_r^r$ that $d_r (\delta_{{\bf x}}^{{\bf p}},\mu )$ is minimal if and only if $ \|x_{ i}-f_i \|_r$ is minimal for every $i$. By Corollary~\ref{co2}, the latter is the case precisely if $x_{ i}\in Q_{\frac{1}{2} (P_{ i-1}+P_{ i} )}^{f_i^{-1}}=Q_{\frac{1}{2} (P_{ i-1}
+P_{ i} )}^{F_{\mu}}$  for $r=1$, and if $x_{ i}=\tau_r^{f_i}$ for $r>1$.
\end{proof}

\begin{rem}\label{re0}
(i) For $r=1$ and ${\bf p} ={\bf u}_n$, Theorem~\ref{th1} reduces to
\cite[Theorem~2.8]{BHM}. In particular,
$\frac{1}{n}\sum_{i=1}^n\delta_{F_{\mu}^{-1} (\frac{2i-1}{2n} )}$ is a
best uniform $1$-approximation of $\mu\in\mathcal{P}_1$. For $n=1$,
\eqref{w1} yields the well-known fact that $d_1(\delta_a,\mu)$ is
minimal if and only if $a\in\mathbb{R}$ is a {\em median} of 
$\mu$.

(ii) For $r=2$, if $\mu\in\mathcal{P}_2$ and ${\bf p} \in\Pi_n$ with
$p_{ i}>0$ for all $i$, then  by Remark~\ref{re1a}(i), the unique
best  $2$-approximation of $\mu$, given ${\bf p}$, is $\delta_{{\bf x}}^{{\bf p}}$ with
$x_{ i}=p_{ i}^{-1}\int_{P_{ i-1}}^{P_{ i}}F_{\mu}^{-1}(t)\, {\rm
  d}t$. In particular, $d_2 (\delta_a,\mu )$ is minimal precisely for
$a=\int_0^1F_{\mu}^{-1}(t)\, {\rm d}t$.
\end{rem}

\begin{example}\label{sq4}
Given $\mu\in\mathcal{P}_r$ and ${\bf p} \in\Pi_n$, Theorem~\ref{th1} can
also be utilized to minimize $d_r
(\sum_{i=1}^np_{ i}\delta_{x_{ i}},\mu )$ where ${\bf x} \in\mathbb{R}^n$ but
not necessarily ${\bf x} \in\Xi_n$. For instance, with $\mu={\rm Beta}(2,1)$
as in Example~\ref{sq} and ${\bf p} = (2/3,1/3 )$ as well as ${\bf q} = (1/3,2/3 )$,
for $r=1$, 
$$
\delta_{\bullet}^{{\bf p}}= {\textstyle\frac{2}{3}} \delta_{1/\sqrt{3}}+{\textstyle\frac{1}{3}}\delta_{\sqrt
{5/6}},\quad
\delta_{\bullet}^{{\bf q}} ={\textstyle\frac{1}{3}}\delta_{1/\sqrt{6}}+{\textstyle\frac{2}{3}}
\delta_{2/\sqrt
{6}}\, .
$$  
Since  $d_1 (\delta_{\bullet}^{{\bf p}},\mu )\thickapprox0.12154 >d_1
(\delta_{\bullet}^{{\bf q}} ,\mu )\thickapprox0.10677$, it follows, that
$\min_{{\bf x} \in\mathbb{R}^2}  d_1 (\frac{2}{3}\delta_{x_{ 1}}
+\frac{1}{3}\delta_{x_{ 2}},\mu )=d_1 (\delta_{\bullet}^{{\bf q}} ,\mu )$.
In general, this minimizing problem can be solved by applying
Theorem~\ref{th1} to $ (p_{ \sigma(1)},\cdots,p_{ \sigma(n)}
)\in\Pi_n$ for all permutations $\sigma$ of $ \{1,\cdots,n \}$. The
permutations yielding the minimal value may depend on $r$. Often, not
all $n!$ permutations $\sigma$ have to be considered. For instance, if
$F^{-1}_{\mu}$ is concave on $]0,1[$ as in the above example, then
only the (unique) non-decreasing rearrangement of $p$ is relevant.
\end{example}

Given $\mu\in\mathcal{P}_r$ and ${\bf p}_n\in\Pi_n$ for all $n$, it is
natural to ask whether $d_r (\delta_{\bullet}^{{\bf p}_n},\mu )\to0$ as
$n\to\infty$. As in the dual situation of Subsection~\ref{subloc},
this may or may not be the case, as illustrated by the 
following example.

\begin{example}\label{ex2}
Consider again the exponential distribution $\mu$ of
Example~\ref{expe}. By \eqref{w1}, the unique best uniform
$1$-approximation of $\mu$ is $\delta_{{\bf x}_n}^{{\bf u}_n}$ with
$x_{n,i}=F_{\mu}^{-1} (\frac{2i-1}{2n} )=\log\frac{2n}{2n-2i+1}$, for
every $n\in\mathbb{N}$ and $1\le i\le n$, 
and
$$
nd_1 (\delta_{\bullet}^{{\bf u}_n},\mu )=-2\sum\nolimits_{i=1}^ni\log\frac{2i-1}{2i}
+\log\frac{(2n)!}{2^{2n}n!n^n}=\frac{1}{4}\log n+\cO (1) \ \text{as}\
n\to\infty\, .
$$
By Remark~\ref{re0}(ii), the best uniform $2$-approximation of
$\mu$ is unique, namely $\delta_{{\bf y}_n}^{{\bf u}_n}$ with
$y_{n,i}=n\int_{(i-1)/n}^{i/n}F_{\mu}^{-1}(t) \, {\rm
  d}t=\log\displaystyle\frac{en(n-i)^{n-i}}{(n-i+1)^{n-i+1}}$, and  
$$
\sqrt{n}d_2 (\delta_{\bullet}^{{\bf u}_n},\mu )=\sqrt{n-\sum\nolimits_{i=1}^{n-1}i(i+1)
 (\log\frac{i}{i+1} )^2}=C_2+\cO ( n^{-1}) \quad  \text{as}\ n\to\infty\,
, $$
where $C_2^2=1+\sum_{i=1}^{\infty}\left(1-i(i+1)\left(\log\frac{i}{i+1}\right)^2
\right)
\thickapprox1.0803$. In fact, it can be shown that
$\lim_{n\to\infty}n^{1/r}d_r (\delta_{\bullet}^{{\bf u}_n},\mu )=C_r$
whenever $r>1$, with the appropriate $0<C_r<+\infty$. 
Thus $d_r (\delta_{\bullet}^{{\bf u}_n},\mu )\to0$ as $n\to\infty$, but the
rate of convergence evidently depends on $r$, and is slower than $
(n^{-1} )$. By contrast, consider ${\bf p}_n\in\Pi_n$ with $p_{n,i}=\frac{2^{i-1}}{2^n-1}$ for $1\le i\le n$.
Then $\lim_{n\to\infty}d_r (\delta_{\bullet}^{{\bf p}_n},\mu )=d_r (\nu,\mu
)>0$ with $\nu=\sum_{i=1}^{\infty}2^{-i}\delta_{a_i}$, and
$a_i=F_{\mu}^{-1} (3\cdot2^{-i-1} )$ if $r=1$ 
and $a_i=\tau_r^{F_{\mu}^{-1} |_{ [2^{-i},2^{-i+1} ]} .}$ if $r>1$.
\end{example}

Example~\ref{ex2} suggests a simple condition that may be imposed on
$({\bf p}_n)$, with ${\bf p}_n\in\Pi_n$ for every $n$, in order to guarantee that
$\lim_{n\to\infty}d_r (\delta_{\bullet}^{{\bf p}_n},\mu )=0$. The following
result is a counterpart of Theorem~\ref{thlo}. Due to the nature of
\eqref{0}, the proof is similar but not identical; recall that
$G^{F_{\mu}^{-1}}\subset\mathbb{I}$ for every $\mu\in\mathcal{P}$.

\begin{theorem}\label{th11}
Assume that $\mu\in\mathcal{P}_r$ for some $r\ge1$, and ${\bf p}_n\in\Pi_n$ for every $n\in\mathbb{N}$.
Then  $\lim_{n\to\infty}d_r (\delta_{\bullet}^{{\bf p}_n},\mu )=0$ if and
only if 
\begin{equation}\label{wei}
\lim\nolimits_{n\to\infty}\min\nolimits_{1\le i\le n}  |t-P_{n,i} |=0,\quad
\forall  t\in G^{F_{\mu}^{-1}}.
\end{equation} 
In particular, \eqref{wei} holds whenever $\lim_{n\to\infty}\max_{1\le i\le n}p_{n,i}=0$.
\end{theorem}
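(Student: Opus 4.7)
The plan is to mimic the proof of Theorem~\ref{thlo}, translated via the representation $d_r(\delta_\bullet^{{\bf p}_n}, \mu) = \|g_n^{-1} - F_\mu^{-1}\|_r$ from \eqref{0}, where $g_n^{-1}$ is the non-decreasing step function that equals $x_{n,i}$ on $[P_{n,i-1}, P_{n,i})$. Both directions will hinge on Theorem~\ref{th1}, which identifies $g_n^{-1}$ as the $L^r$-minimizer among step functions with breakpoints confined to $\{P_{n,i}\}$.

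For necessity I would argue by contrapositive: suppose some $t \in G^{F_\mu^{-1}} \cap \,]0,1[$ admits $\varepsilon > 0$ and a subsequence $(n_k)$ with $\min_i |t - P_{n_k, i}| \ge 2\varepsilon$. Then some block $[P_{n_k, i-1}, P_{n_k, i}]$ must contain $[t - \varepsilon, t + \varepsilon]$, and on this block $g_{n_k}^{-1}$ is constant while $F_\mu^{-1}$, by the growth property at $t$, is non-constant on $(t-\varepsilon, t+\varepsilon)$. Consequently the convex and coercive map
$$c \mapsto \int_{t-\varepsilon}^{t+\varepsilon} \left|F_\mu^{-1}(s) - c\right|^r \, ds$$
has a strictly positive minimum $\eta > 0$ (cf.\ Theorem~\ref{le2}), which yields $d_r(\delta_\bullet^{{\bf p}_{n_k}}, \mu)^r \ge \eta$ for every $k$ and rules out convergence.

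For sufficiency, given $\varepsilon > 0$ I would first produce a non-decreasing step function $h$ with $\|F_\mu^{-1} - h\|_r < \varepsilon/2$ whose breakpoints $0 = s_0 < s_1 < \cdots < s_N = 1$ all lie in $G^{F_\mu^{-1}}$. Such $h$ is obtained by discretizing the (first truncated, when $\mu$ has unbounded support) range of $F_\mu^{-1}$ at levels $j/m$ and setting $s_j = \inf\{t : F_\mu^{-1}(t) \ge j/m\}$; an argument in the spirit of Proposition~\ref{pro2} confirms that level-set infima of monotone functions automatically satisfy the left-growth condition. Condition \eqref{wei} then supplies, for all $n$ large enough, strictly increasing indices $i_0(n) = 0 < i_1(n) < \cdots < i_{N-1}(n) < i_N(n) = n$ with $P_{n, i_j(n)} \to s_j$, so the step function $\tilde h_n$ that takes the value of $h$ on $(s_{j-1}, s_j)$ over the block $[P_{n, i_{j-1}(n)}, P_{n, i_j(n)})$ is non-decreasing with breakpoints in $\{P_{n,i}\}$ and disagrees with $h$ only on at most $N-1$ intervals of vanishing total Lebesgue measure, giving $\|\tilde h_n - h\|_r \to 0$. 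Applying Theorem~\ref{th1} and the triangle inequality, $d_r(\delta_\bullet^{{\bf p}_n}, \mu) \le \|\tilde h_n - F_\mu^{-1}\|_r < \varepsilon$ for all large $n$. The supplementary claim is immediate from $\min_i |t - P_{n,i}| \le \max_i (P_{n,i} - P_{n,i-1}) = \max_i p_{n,i}$, valid for every $t \in \mathbb{I}$.

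I expect the main obstacle to be the construction of the step function $h$ with breakpoints in $G^{F_\mu^{-1}}$: the growth set can be thin (Cantor-like), which forces one to discretize the range of $F_\mu^{-1}$ rather than its domain, and for $\mu$ with unbounded support a preliminary $L^r$-truncation of $F_\mu^{-1}$ is required before the range discretization can be applied. Once $h$ is in hand, the remainder is a standard step-function perturbation estimate combined with the optimality provided by Theorem~\ref{th1}.
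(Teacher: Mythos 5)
Your proposal is correct, and it splits naturally into a half that matches the paper and a half that does not. The necessity argument is essentially the paper's: both of you trap $[t-\varepsilon,t+\varepsilon]$ inside a single block $[P_{n_k,i-1},P_{n_k,i}]$ on which the approximating quantile function is constant, and bound $d_r(\delta_{\bullet}^{{\bf p}_{n_k}},\mu)^r$ below by $\min_{c}\int_{t-\varepsilon}^{t+\varepsilon}|F_{\mu}^{-1}(u)-c|^r\,{\rm d}u>0$, positivity coming from the growth property at $t$; like the paper, you handle only interior $t$ and leave $t\in\{0,1\}$ implicit. The sufficiency argument is genuinely different. The paper proves pointwise convergence $f_n^{-1}(t)\to F_{\mu}^{-1}(t)$ at every continuity point, with a case split on $t\in G^{F_{\mu}^{-1}}$ (squeeze between two nearby growth points supplied by \eqref{wei}) versus $t\notin G^{F_{\mu}^{-1}}$ (an integral estimate over the maximal gap $]a,b[$ around $t$), and then upgrades to $L^r$ by dominated convergence for bounded support and by an auxiliary bounded-support measure otherwise, exactly as in Theorem~\ref{thlo}. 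You instead build an explicit competitor: a step function $h$ with $\|F_{\mu}^{-1}-h\|_r<\varepsilon/2$ whose breakpoints are forced into $G^{F_{\mu}^{-1}}$ by discretizing the \emph{range} of (a truncation of) $F_{\mu}^{-1}$ --- the points $\inf\{F_{\mu}^{-1}\ge c\}$ are indeed growth points, consistent with Proposition~\ref{pro2} --- and then snap those breakpoints onto the grid $\{P_{n,i}\}$ using \eqref{wei}, concluding by optimality and the triangle inequality. Your route is more constructive and absorbs the unbounded-support case into the range truncation of $h$; the paper's route yields the a.e.\ convergence of the quantile functions as a byproduct. Only a cosmetic remark: for the last step you need merely the defining minimality of $\delta_{\bullet}^{{\bf p}_n}$ over all ${\bf x}\in\Xi_n$, not the full characterization in Theorem~\ref{th1}.
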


\begin{proof}
For every $n\in\mathbb{N}$, let $\delta^{{\bf p}_n}_{{\bf x}_n}$ be a best
$r$-approximation of $\mu$, given ${\bf p}_n$, and also
$f_n=F_{\delta_{{\bf x}_n}^{{\bf p}_n}}$, for convenience.

To see that \eqref{wei} is necessary, suppose that 
$$
\min\nolimits_{1\le i\le n_k}\left|t-P_{n_k,i}\right|\ge2\varepsilon,\quad
\forall  k\in\mathbb{N},
$$
for some $0<t<1,\ 0<\varepsilon<\min\{t,1-t\}$, and the appropriate
sequence $(n_k)$. (The other cases, $t=0$ and $t=1$, are analogous.)
Since $f_{n_k}^{-1}$ is constant on $ [t-\varepsilon,t+\varepsilon ]$
whereas $F_{\mu}^{-1}$ is not,  
$$
d_r (\delta_{\bullet}^{{\bf p}_{n_k}},\mu )^r=d_r (\delta_{{\bf x}_{n_k}}^{{\bf p}_{n_k
}}, \mu )^r\ge\min\nolimits_{c\in\mathbb{R}}
\int_{t-\varepsilon}^{t+\varepsilon} |F_{\mu}^{-1}(u)-c |^r{\rm
  d}u>0,\ k\in\mathbb{N},
$$
and hence $\limsup_{n\to\infty}d_r (\delta_{\bullet}^{{\bf p}_n},\mu )>0$.

To show that \eqref{wei} also is sufficient, assume that $t$ is a
continuity point of $F_{\mu}^{-1}$. If $t\in G^{F_{\mu}^{-1}}$ then,
given $\varepsilon>0$, there exist $t_1, t_2\in G^{F_{\mu}^{-1}}$ with
$ |F_{\mu}^{-1}(t_{1,2})-F_{\mu}^{-1}(t) |<\varepsilon$ and either
$t<t_1<t_2$ or $t_1<t_2<t$. Assume w.o.l.g. that $t<t_1<t_2$. (The
other case is similar.) By \eqref{wei}, $t<P_{n,i_n}<P_{n,i_n+1}<t_2$
for all sufficiently large $n$ and the appropriate $1\le i_n\le
n$. Since $f_n^{-1}$ is constant on $[P_{n,i_n},P_{n,i_n+1}]$ with a
value between $F_{\mu}^{-1} (P_{n,i_n} )\ge F_{\mu}^{-1}(t)$ and
$F_{\mu}^{-1} (P_{n,i_n+1} )\le F_{\mu}^{-1}(t)+\varepsilon$, clearly
$f_n^{-1}(t)\to F_{\mu}^{-1}(t)$. If, on the other hand, $t\notin
G^{F_{\mu}^{-1}}$, then let $]a,b[\, \subset\mathbb{I}$ be the largest
interval that contains $t$ but is disjoint from
$G^{F_{\mu}^{-1}}$. Assume w.o.l.g. that $0<a<b<1$. (The cases $a=0$
and $b=1$ are analogous.) Then $a,b\in G^{F_{\mu}^{-1}}$. Given
$\varepsilon>0$, since $F_{\mu}^{-1}-F_{\mu}^{-1}(t)\in
L^r(\mathbb{I})$, there exists $\delta>0$ such that $\int_A
|F_{\mu}^{-1}(u)-F_{\mu}^{-1}(t) |^r{\rm d}u<\varepsilon$ whenever
$\lambda(A)<\delta$. Let $i_n=\min \{1\le j\le n:\ P_{n,j}>t \}$. Note
that $P_{n,i_n-1}\le t<P_{n,i_n}$. If $a\le P_{n,i_n-1}<P_{n,i_n}\le
b$, then $f_n^{-1}(t)=F_{\mu}^{-1}(t)$. If $P_{n,i_n-1}<a$, then $
|P_{n,i_n-1}-a |=\min_{1\le i\le n}  |a-P_{n,i} |$, $\max
\{b,P_{n,i_n} \}-b\le\min_{1\le i\le n} |b-P_{n,i} |$, and 
$ (a-P_{n,i_n-1} )+\max \{b,P_{n,i_n} \}-b<\delta$ for all sufficiently
large $n$, by \eqref{wei}. Hence 
\begin{align*}
(t-a)   |F_{\mu}^{-1}(t) & -f_n^{-1}(t) |^r\le\int_{P_{n,i_n-1}}^{P_{n,i_n}}
  |F_{\mu}^{-1}(u)-f_n^{-1}(t) |^r{\rm d}u\\
& \le\int_{P_{n,i_n-1}}^{P_{n,i_n}} |F_{\mu}^{-1}(u)-F_{\mu}^{-1}(t) |^r
{\rm d}u\\
  &=\int_{P_{n,i_n-1}}^a |F_{\mu}^{-1}(u)-F_{\mu}^{-1}(t) |^r{\rm d}u+\int_b^{\max \{b,P_{n,i_n} \}} |F_{\mu}^{-1}(u)-F_{\mu}^{-1}(t) 
  |^r{\rm d}u<\varepsilon.
\end{align*}
For $P_{n,i_n}>b$, an analogous argument applies. In summary, $f_n^{-1}\to F_{\mu}^{-1}$ a.e. on $\mathbb{I}$, and the remaining argument is identical to the one in the proof of Theorem~\ref{thlo}.
\end{proof}

Since $\delta_{\bullet}^{{\bf p}}$ is a best approximation of
$\mu\in\mathcal{P}_r$ w.r.t. the metric $d_r$, given weights ${\bf p}$, it
is natural to ask whether $\delta_{\bullet}^{{\bf p}}$ reflects any basic
feature of $\mu$. Most basically perhaps, how is ${\rm supp}\
\delta_{\bullet}^{{\bf p}}$ related to ${\rm supp}\ \mu$ ? As the following
example shows, it may not be possible to guarantee ${\rm supp}\
\delta_{\bullet}^{{\bf p}} \subset{\rm supp}\ \mu$.

\begin{example}\label{Can}
Let $\mu$ be the Cantor probability measure, i.e., the
$\frac{\log2}{\log3}$-dimensional Hausdorff measure on the classical
Cantor middle third set. Using the fact that $Q_t^{F_{\mu}}$ is a
non-degenerate interval for every dyadic rational $0<t<1$, it is
readily seen that $\delta_{\bullet}^{{\bf u}_n}$ is not unique for any
$n\in\mathbb{N}$ whenever $r=1$. For instance, $\frac{1}{2}
(\delta_{1/5}+\delta_{4/5} )$ and $\frac{1}{2}
(\delta_{1/9}+\delta_{8/9} )$ both are best uniform $1$-approximations
of $\mu$, and $\{1/5,4/5\}\cap{\rm supp}\ \mu=\varnothing$ whereas
$\{1/9,8/9\}\subset{\rm supp}\ \mu$. For $r>1$, however,
$\delta_{\bullet}^{{\bf u}_n}$ always is unique. In fact,
$\delta_{\bullet}^{{\bf u}_{2^k}}$ even is independent of $r>1$, due to
symmetry, and ${\rm supp}\ \delta_{\bullet}^{{\bf u}_{2^k}}\cap\ {\rm supp}\
\mu=\varnothing$. For example, $\delta_{\bullet}^{{\bf u}_2}=\frac{1}{2}
(\delta_{1/6}+\delta_{5/6} )$ for all $r>1$, and $\{1/6,5/6\}\cap\ {\rm supp}\ \mu=\varnothing$.
\end{example}

To formalize the observations in Example~\ref{Can}, note that if
$\delta_{{\bf x}}^{{\bf p}}$ is a best $1$-approximation of $\mu$,
given ${\bf p}\in\Pi_n$,
then, by Theorem~\ref{th1}, $x_{ i}\in Q_{\frac{1}{2} (P_{ i-1}+P_{ i}
  )}^{F_{\mu}}$ whenever $P_{ i-1}<P_{ i}$. Since the endpoints of all
quantile sets $Q_t^{F_{\mu}}$ belong to ${\rm supp}\ \mu$, by
Proposition~\ref{pro2}, it is possible to choose ${\bf y} \in\Xi_n$ with $d_1
(\delta_{{\bf y}}^{{\bf p}},\mu )=d_1 (\delta_{{\bf x}}^{{\bf p}},\mu )$ and ${\rm supp}\
\delta_{{\bf y}}^{{\bf p}}\subset{\rm supp}\ \mu$. Similarly, if $r>1$, then
$x_{ i}=\tau_r^{f_i}$ with $f_i=F_{\mu}^{-1}\left|_{[P_{ i-1},P_{ i}]},\right.$ and consequently 
$x_{ i}\in [F_{\mu}^{-1} (P_{ i-1} ),F_{\mu}^{-1} (P_{ i}- ) ]$. By
Corollary~\ref{co2}(i), it follows that 
$$
\min\ {\rm supp}\ \mu=F_{\mu}^{-1} (P_{ 0}+ )\le x_{ i}\le
F_{\mu}^{-1} (P_{ n}- )=\max\ {\rm supp}\ \mu,\quad \forall  i=1,\cdots,n
\, .
$$
This establishes

\begin{prop}\label{Pro1}
Assume that $\mu\in\mathcal{P}_r$ for some $r\ge1$, and
$n\in\mathbb{N}$. If $r=1$ or ${\rm supp}\ \mu$ is connected, 
then there exists a best $r$-approximation $\delta_{\bullet}^{{\bf
    p}}$ of $\mu$, given ${\bf p} \in\Pi_n$, with ${\rm supp}\
\delta_{\bullet}^{{\bf p}} \subset {\rm supp}\ \mu$.
\end{prop}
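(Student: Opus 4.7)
The plan is to invoke Theorem~\ref{th1} to characterize best $r$-approximations explicitly, and then combine this with Proposition~\ref{pro2} when $r=1$, or with Corollary~\ref{co2}(i) when $r>1$ and ${\rm supp}\ \mu$ is connected, so as to ensure the atom locations can always be chosen inside ${\rm supp}\ \mu$.

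For $r=1$, Theorem~\ref{th1} asserts that any ${\bf x}\in\Xi_n$ satisfying $x_i\in Q^{F_\mu}_{\frac{1}{2}(P_{i-1}+P_i)}$ whenever $P_{i-1}<P_i$ yields a best $1$-approximation $\delta_{{\bf x}}^{{\bf p}}$. By Proposition~\ref{pro2}, the endpoints of every quantile set $Q_t^{F_\mu}$ lie in $G^{F_\mu}\cup\{-\infty,+\infty\}$, and for $0<t<1$ these endpoints are finite, since $\lim_{x\to-\infty}F_\mu(x)=0$ and $\lim_{x\to+\infty}F_\mu(x)=1$ force $\inf\{F_\mu\ge t\}$ and $\sup\{F_\mu\le t\}$ to be in $\mathbb{R}$; hence they lie in $G^{F_\mu}={\rm supp}\ \mu$. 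Choosing $y_i=\max Q^{F_\mu}_{\frac{1}{2}(P_{i-1}+P_i)}$ whenever $P_{i-1}<P_i$, and extending arbitrarily (but consistently with ${\bf y}\in\Xi_n$) on indices where $P_{i-1}=P_i$, therefore produces ${\bf y}\in\Xi_n$ with ${\rm supp}\ \delta_{{\bf y}}^{{\bf p}}\subset{\rm supp}\ \mu$ and $d_1(\delta_{{\bf y}}^{{\bf p}},\mu)=d_1(\delta_{\bullet}^{{\bf p}},\mu)$.

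For $r>1$ with ${\rm supp}\ \mu$ connected, Theorem~\ref{th1} forces $x_i=\tau_r^{f_i}$ with $f_i=F_\mu^{-1}|_{[P_{i-1},P_i]}$ whenever $P_{i-1}<P_i$. Since $f_i$ is non-decreasing, Corollary~\ref{co2}(i) yields $\tau_r^{f_i}\in[f_i(P_{i-1}+),f_i(P_i-)]\subset[\min {\rm supp}\ \mu,\max {\rm supp}\ \mu]$, and the connectedness hypothesis turns the latter bracket into a subset of ${\rm supp}\ \mu$. Consequently $x_i\in{\rm supp}\ \mu$ for every index with positive weight, and ${\rm supp}\ \delta_{\bullet}^{{\bf p}}\subset{\rm supp}\ \mu$ follows.

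The principal delicate point is conceptual rather than computational: for $r=1$ the best $1$-approximation is typically not unique, so one must actively exploit this freedom to place atoms at endpoints of the relevant quantile sets; as Example~\ref{Can} shows, without such a deliberate choice the inclusion ${\rm supp}\ \delta_{\bullet}^{{\bf p}}\subset{\rm supp}\ \mu$ can genuinely fail. For $r>1$ uniqueness holds automatically and the localization from Corollary~\ref{co2}(i) does the work, but the connectedness assumption on ${\rm supp}\ \mu$ is indispensable, since otherwise $\tau_r^{f_i}$ may land in a gap of the support.
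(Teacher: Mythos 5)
Your proposal is correct and follows essentially the same route as the paper: for $r=1$ it combines the characterization \eqref{w1} of Theorem~\ref{th1} with Proposition~\ref{pro2} to relocate atoms to endpoints of the quantile sets $Q^{F_\mu}_{\frac12(P_{i-1}+P_i)}$, which lie in ${\rm supp}\ \mu$, and for $r>1$ it uses \eqref{w2} together with Corollary~\ref{co2}(i) and connectedness of the support. The only difference is that you spell out a few details the paper leaves implicit (finiteness of the quantile-set endpoints, the handling of zero-weight indices), which is harmless.
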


Among the best approximations of $\mu$, given ${\bf p} \in\Pi_n$, the case of
{\em uniform} approximations, i.e., ${\bf p} ={\bf u}_n$, arguably is the most
important. In this case, Theorem~\ref{th11} has the following
corollary; see also \cite[Thm.2]{MJ}.

\begin{cor}\label{co1}
Assume that $\mu\in\mathcal{P}_r$ for some $r\ge1$, and $1\le s\le
r$. For every $n\in\mathbb{N}$, let $\delta_{\bullet,s}^{{\bf u}_n}$ be a
best uniform $s$-approximation of $\mu$. Then $\lim_{n\to\infty}d_r
(\delta_{\bullet,s}^{{\bf u}_n},\mu )=0$. In particular,
$\lim_{n\to\infty}d_r (\delta_{{\bf x}_n}^{{\bf u}_n},\mu )=0$ for
$x_{n,i}=F_{\mu}^{-1} (\frac{2i-1}{2n} )$, i.e., for
$\delta_{{\bf x}_n}^{{\bf u}_n}$ being one best uniform $1$-approximation of $\mu$.
\end{cor}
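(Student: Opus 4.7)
The plan is to reduce $d_r(\delta_{\bullet,s}^{\mathbf{u}_n},\mu)\to 0$ to $L^r(\mathbb{I})$-convergence of the associated quantile functions, exploiting a structural pinning on the atoms. Let $h_n:=F_{\delta_{\bullet,s}^{\mathbf{u}_n}}^{-1}=\sum_{i=1}^n y_{n,i}\mathbf{1}_{[(i-1)/n,i/n)}$ denote the quantile function of a best uniform $s$-approximation, with atoms $y_{n,1}\le\cdots\le y_{n,n}$. The first step is to establish the sandwich bound
\[
F_\mu^{-1}\bigl((i-1)/n\bigr)\le y_{n,i}\le F_\mu^{-1}(i/n-),\qquad 1\le i\le n.
\]
For $s>1$, writing $f_i=F_\mu^{-1}|_{[(i-1)/n,i/n]}$, Theorem~\ref{th1} gives $y_{n,i}=\tau_s^{f_i}$, and Corollary~\ref{co2}(i) then yields $\tau_s^{f_i}\in[f_i((i-1)/n+),f_i(i/n-)]$, which by right-continuity of $F_\mu^{-1}$ is exactly the sandwich interval. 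For $s=1$, Theorem~\ref{th1} gives $y_{n,i}\in Q^{F_\mu}_{(2i-1)/(2n)}$, and containment in the sandwich interval follows from the definition of $F_\mu^{-1}$ (the upper endpoint $F_\mu^{-1}((2i-1)/(2n))\le F_\mu^{-1}(i/n-)$ by monotonicity, and the lower endpoint because $F_\mu(x)<(2i-1)/(2n)$ for every $x<F_\mu^{-1}((i-1)/n)$).

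The sandwich immediately forces $h_n\to F_\mu^{-1}$ $\lambda$-a.e.\ on $\mathbb{I}$: at any continuity point $t\in(0,1)$ of $F_\mu^{-1}$, with $i=\lceil nt\rceil$, both endpoints of the sandwich interval converge to $F_\mu^{-1}(t)$; since $F_\mu^{-1}$ is monotone, this covers all but countably many $t$. The ``in particular'' assertion is then the special case $s=1$ applied to $x_{n,i}=F_\mu^{-1}((2i-1)/(2n))$, which by Remark~\ref{re0}(i) is itself a best uniform $1$-approximation of $\mu$.

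To upgrade to $L^r$-convergence, fix $\varepsilon>0$ and, by absolute continuity of $\int|F_\mu^{-1}|^r\mathrm{d}\lambda$, choose $\eta\in(0,1/2)$ with $\int_E|F_\mu^{-1}|^r<\varepsilon$ whenever $\lambda(E)<3\eta$. On the bulk $[\eta,1-\eta]$, $F_\mu^{-1}$ is bounded, so by the sandwich $h_n$ is uniformly bounded there for $n\ge 1/\eta$, and dominated convergence gives $\int_\eta^{1-\eta}|h_n-F_\mu^{-1}|^r\mathrm{d}t\to 0$. On the tails $[0,\eta]\cup[1-\eta,1]$, the sandwich combined with monotonicity of $F_\mu^{-1}$ supplies sign-separated sum-to-integral comparisons: $\tfrac{1}{n}|y_{n,i}|^r\le\int_{i/n}^{(i+1)/n}|F_\mu^{-1}|^r$ when $y_{n,i}\ge 0$, and $\tfrac{1}{n}|y_{n,i}|^r\le\int_{(i-2)/n}^{(i-1)/n}|F_\mu^{-1}|^r$ when $y_{n,i}\le 0$. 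Summing these over the tail indices yields $\int_{[0,\eta]\cup[1-\eta,1]}|h_n|^r\le C\varepsilon$ for all large $n$, and $|h_n-F_\mu^{-1}|^r\le 2^{r-1}(|h_n|^r+|F_\mu^{-1}|^r)$ bounds the tail $L^r$-error by $C'\varepsilon$; since $\varepsilon$ was arbitrary, $\|h_n-F_\mu^{-1}\|_r\to 0$. The main obstacle is control of the extreme indices $i=1,n$ when $\mathrm{supp}\,\mu$ is unbounded (the sandwich there degenerates with an infinite endpoint); these must be handled by a separate H\"older-type bound based on the $L^s$-minimality of $y_{n,i}$---via Theorem~\ref{le2}(i) for $s>1$, or the finiteness of the quantile set $Q^{F_\mu}_{1/(2n)}$ for $s=1$---to recover $\tfrac{1}{n}|y_{n,i}|^r\le C\int_{(i-1)/n}^{i/n}|F_\mu^{-1}|^r$.
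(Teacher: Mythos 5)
Your argument is correct, and it takes a genuinely different route from the paper's. The paper obtains the corollary from Theorem~\ref{th11} (specialized to ${\bf p}_n={\bf u}_n$, so that $\max_i p_{n,i}=1/n\to0$ forces \eqref{wei}); the proof of that theorem establishes a.e.\ convergence of the quantile functions by a case analysis around the growth set $G^{F_\mu^{-1}}$, and upgrades to $L^r$-convergence for unbounded ${\rm supp}\ \mu$ by comparing with a compactly supported $\nu$ close to $\mu$ in $d_r$ and invoking optimality (the argument borrowed from Theorem~\ref{thlo}). You instead give a direct, self-contained proof: the sandwich $F_\mu^{-1}((i-1)/n)\le y_{n,i}\le F_\mu^{-1}(i/n-)$ (correctly extracted from Theorem~\ref{th1} together with Corollary~\ref{co2}(i) for $s>1$, and from $Q^{F_\mu}_{(2i-1)/(2n)}$ for $s=1$), a.e.\ convergence at continuity points of $F_\mu^{-1}$ (much simpler here than in Theorem~\ref{th11} because the partition $\{i/n\}$ is dense), and an explicit sum-to-integral tail estimate in place of the auxiliary measure $\nu$. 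Your route has a real advantage: for $s<r$ the optimality-comparison step in the paper's argument uses $d_r$-minimality, which a best $s$-approximation does not literally satisfy, whereas your tail estimate uses only the sandwich and so applies verbatim to the mismatched exponents. The one step you leave as a sketch --- the extreme indices $i\in\{1,n\}$ when the sandwich degenerates to an infinite endpoint --- is genuinely needed but is fillable exactly as you indicate: for $s>1$, comparing with $t=0$ gives $\lambda(J_{n,i})^{1/s}|\tau_s^{f_i}|=\|\tau_s^{f_i}\|_{L^s(J_{n,i})}\le\|f_i-\tau_s^{f_i}\|_s+\|f_i\|_s\le2\|f_i\|_s$, and Jensen's inequality converts this $L^s$-bound into $\tfrac1n|y_{n,i}|^r\le2^r\int_{(i-1)/n}^{i/n}|F_\mu^{-1}|^r$; for $s=1$ the finite endpoint of $Q^{F_\mu}_{(2i-1)/(2n)}$ plays the same role, at the cost of a factor $2$ in the length of the comparison interval. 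I would only ask you to write out that computation explicitly, since as stated it is an announcement rather than a proof.
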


Despite its simplicity, Corollary \ref{co1} touches upon a recurring
theme and motivation of the present article, namely the surprising
versatility of best uniform $1$-approximations: Not only are they easy
to compute (by virtue of Theorem \ref{th1}) and hence preferable for
practical computations, but they also provide reasonably good uniform
$d_r$-approximations. In fact, beyond what Corollary \ref{co1}
asserts, they often even capture the precise rate of convergence of
$\bigl( d_r(\delta_{\bullet}^{{\bf u}_n} , \mu) \bigr)$; see, e.g., Example
\ref{exGal} below, and the discussion following Proposition \ref{Zador}.

\begin{rem}
For $r=s=2$, Corollary~\ref{co1} yields \cite[Thm.~3.6]{Ba}. In
\cite{Ba}, a {\em convex order} on $\mathcal{P}$ is considered, shown
to be preserved by best uniform $2$-approximations, and applied to the
numerical construction of martingales. We conjecture that best uniform
$r$-approximations preserve this order for all $r>1$. By contrast,
best (unconstrained) $2$-approximations, considered in Subsection~5.3
below, do not in general preserve the convex order; see \cite[Thm.2.1]{Ba}.
\end{rem}

The remainder of this subsection is devoted to a study of $d_r
(\delta^{{\bf u}_n}_{\bullet},\mu )$ as $n\to\infty$. Since best uniform
$r$-approximations may be hard to identify explicitly, we will also
consider {\em asymptotically} best uniform $r$-approximations. 
Formally, $ (\delta_{{\bf x}_n}^{{\bf u}_n} )$ with ${\bf x}_n\in\Xi_n$ for all
$n\in\mathbb{N}$ is a sequence of 
{\em asymptotically best uniform $r$-approximations} of
$\mu\in\mathcal{P}_r\setminus\left\{\delta_{{\bf x}}^{{\bf u}_i}:\
  i\in\mathbb{N},\ {\bf x} \in\Xi_i\right\}$ if
$$
\lim\nolimits_{n\to\infty}\frac{d_r (\delta_{{\bf x}_n}^{{\bf u}_n},\mu )}
{d_r (\delta_{\bullet}^{{\bf u}_n},\mu )}=1.
$$
To illustrate a possible behaviour of $ \bigl( d_r
(\delta^{{\bf u}_n}_{\bullet},\mu ) \bigr)$, as well as the practical relevance of asymptotically best uniform approximations, we first consider a simple example.

\begin{example}\label{eg5}
Let $\mu={\rm Beta}(2,1)$ as in Example~\ref{sq}. Theorem~\ref{th1}
yields a unique best uniform $r$-approximation of $\mu$ for every
$r\ge1$. For $r=1$, a short calculation shows that 
$$
nd_1
(\delta_{\bullet}^{{\bf u}_n},\mu )=\frac{1}{4}+\cO (n^{-1/2})\quad \text{as}\
n\to\infty,
$$
whereas for $r=2$, 
$$
nd_2 (\delta_{\bullet}^{{\bf u}_n},\mu
)=\frac{1}{4\sqrt{3}}\sqrt{\log n}+\cO (1) \quad \text{as}\
n\to\infty.
$$
For $1<r<2$, however, $\delta_{\bullet}^{{\bf u}_n}$ is not easy to
calculate explicitly. This not only makes the rate of convergence of $
\bigl( d_r (\delta_{\bullet}^{{\bf u}_n},\mu ) \bigr)$ hard to determine,
but it also emphasizes the need for simple {\em asymptotically} best
uniform approximations. In fact, Theorem~\ref{th6} below shows that,
for every $1\le r<2$, $\lim_{n\to\infty}nd_r
(\delta_{\bullet}^{{\bf u}_n},\mu )= \left( \frac{2^{1-2r}}
{(r+1)(2-r)} \right)^{1/r}$, and  $ (\delta_{{\bf x}_n}^{{\bf u}_n} )$, with
$x_{n,i}=\sqrt{\frac{2i-1}{2n}}$ for $1\le i\le n$, is a  sequence of
asymptotically best uniform $r$-approximations. By contrast, it turns
out that $\lim_{n\to\infty}n^{1/2+1/r}d_r (\delta_{\bullet}^{{\bf u}_n},\mu
)$ is finite and positive whenever $r>2$.
\end{example}

The observations in Example~\ref{eg5} are a special case of a general
principle: If the quantile function of $\mu\in\mathcal{P}_r$ is
absolutely continuous (and not constant), then
$ \bigl( nd_r (\delta_{\bullet}^{{\bf u}_n},\mu ) \bigr)$ converges to a
positive limit. This fact may be seen as an analogue, in the context
of best uniform approximations, of a classical result regarding best
approximations; cf. Proposition~\ref{Zador} below.

\begin{theorem}\label{th6}
  Assume that $\mu\in\mathcal{P}_r$ for some $r\ge1$. If $\mu^{-1}$ is absolutely continuous (w.r.t. $\lambda$)  then
  \begin{equation}\label{1}
    \lim\nolimits_{n\to\infty}nd_r (\delta_{\bullet}^{{\bf u}_n},\mu )=\frac{1}{2(r+1)
    ^{1/r}}\left(\int_{\mathbb{I}} \left( \frac{{\rm d}\mu^{-1}}{{\rm d}\lambda} \right)^r\right)^{1/r}.
  \end{equation}
Moreover, if $\frac{{\rm d}\mu^{-1}}{{\rm d}\lambda}\in
L^r(\mathbb{I})$ then $ (\delta_{{\bf x}_n}^{{\bf u}_n} )$, with
$x_{n,i}=F_{\mu}^{-1} (\frac{2i-1}{2n} )$ for $1\le i\le n$,  is a
sequence of asymptotically best uniform $r$-approximations of $\mu$, unless $\mu$ is degenerate, i.e., unless $\mu=\delta_a$ for some $a\in\mathbb{R}$.
\end{theorem}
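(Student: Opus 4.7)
The plan is to reduce the problem to the asymptotic analysis of best-constant $L^r$-approximations of the absolutely continuous quantile function $g:=F_\mu^{-1}$ on the equidistant partition $I_{n,i}:=[\tfrac{i-1}{n},\tfrac{i}{n}]$, $1\le i\le n$. By Theorem~\ref{th1} with ${\bf p}={\bf u}_n$,
$$
d_r(\delta_\bullet^{{\bf u}_n},\mu)^r = \sum_{i=1}^n \bigl\|g|_{I_{n,i}} - \tau_{n,i}\bigr\|_r^r, \qquad \tau_{n,i}:=\tau_r^{g|_{I_{n,i}}},
$$
while absolute continuity of $\mu^{-1}$ implies that $g$ is absolutely continuous on $\,]0,1[\,$ with $g'=\rho:={\rm d}\mu^{-1}/{\rm d}\lambda$ a.e.

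For the \emph{upper bound} in \eqref{1}, I would analyze the candidate $x_{n,i}:=g(c_{n,i})$ with $c_{n,i}:=\tfrac{2i-1}{2n}$, since $d_r(\delta_\bullet^{{\bf u}_n},\mu)\le d_r(\delta_{{\bf x}_n}^{{\bf u}_n},\mu)$. Writing $g(t)-g(c_{n,i})=\int_{c_{n,i}}^t\rho$ on $I_{n,i}$ and rescaling $t=c_{n,i}+v/n$ together with $s=c_{n,i}+u/n$ shows
$$
n^r\bigl\|g|_{I_{n,i}}-g(c_{n,i})\bigr\|_r^r = \frac{1}{n}\int_{-1/2}^{1/2}\left|\int_0^v \rho\bigl(c_{n,i}+u/n\bigr)\,du\right|^r dv\, ,
$$
so $n^r d_r(\delta_{{\bf x}_n}^{{\bf u}_n},\mu)^r = \int_0^1 \Phi_n(t)\,dt$ for the piecewise constant function $\Phi_n(t)$ equal to the right-hand side at $t\in I_{n,i}$. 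At every Lebesgue point $t$ of $\rho$, $c_{n,i(t,n)}\to t$ and the Lebesgue differentiation theorem forces the inner integral to tend to $v\rho(t)$, giving $\Phi_n(t)\to \rho(t)^r\int_{-1/2}^{1/2}|v|^r dv = \rho(t)^r/(2^r(r+1))$. Dominated convergence (assuming $\rho\in L^r$) then yields the claimed upper bound; the domination is by the Hardy--Littlewood maximal function $M\rho^r\in L^1$ when $r>1$, and for $r=1$ one uses the elementary estimate $n\|g|_{I_{n,i}}-g(c_{n,i})\|_1\le\int_{I_{n,i}}\rho$, whose sum over $i$ is $\mu^{-1}(\,]0,1[\,)<+\infty$, together with standard absolute-continuity / equi-integrability of the integral.

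For the matching \emph{lower bound}, I would exploit the affine comparison function $h_i(t):=\rho(c_{n,i})(t-c_{n,i})$, which is odd about $c_{n,i}$, so that by Corollary~\ref{co2} (and the remark preceding Proposition~\ref{quan}) its best $L^r$-constant on $I_{n,i}$ is $0$. Consequently, for every $\tau\in\mathbb{R}$,
$$
\bigl\|g|_{I_{n,i}}-\tau\bigr\|_r \;\ge\; \bigl\|h_i-(\tau-g(c_{n,i}))\bigr\|_r - \|e_i\|_r \;\ge\; \|h_i\|_r - \|e_i\|_r\, ,
$$
where $e_i(t):=\int_{c_{n,i}}^t\bigl(\rho(s)-\rho(c_{n,i})\bigr)\,ds$. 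A direct computation gives $n^{1+1/r}\|h_i\|_r = |\rho(c_{n,i})|/\bigl(2(r+1)^{1/r}\bigr)$, and at a Lebesgue point $t\in I_{n,i}$ of $\rho$ one has $n^{1+1/r}\|e_i\|_r\to 0$ because $\tfrac{1}{\lambda(I_{n,i})}\int_{I_{n,i}}|\rho-\rho(c_{n,i})|\to 0$. Writing $n^r d_r(\delta_\bullet^{{\bf u}_n},\mu)^r=\sum_i\tfrac{1}{n}\bigl[n^{1+1/r}\|g|_{I_{n,i}}-\tau_{n,i}\|_r\bigr]^r$ and invoking Fatou's lemma gives the matching lower bound; if $\rho\notin L^r(\mathbb{I})$, the same Fatou estimate forces $n\,d_r(\delta_\bullet^{{\bf u}_n},\mu)\to+\infty$, consistent with \eqref{1}. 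Combining the two bounds establishes \eqref{1}, and since both ratios are of the same order $1/n$ with identical leading constants whenever $\rho\not\equiv 0$ (i.e., $\mu\ne\delta_a$), the sequence $(\delta_{{\bf x}_n}^{{\bf u}_n})$ is asymptotically best.

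The main technical obstacle is the domination argument needed to upgrade the a.e.\ pointwise convergence of the scaled local $L^r$-errors to convergence of their Riemann-type sums: for $r>1$ the Hardy--Littlewood maximal inequality provides the required $L^r$-domination, whereas the borderline case $r=1$ must be handled separately via the monotonicity-based $L^1$-control sketched above; the remaining bookkeeping (endpoints of $\mathbb{I}$, choice of representative at non-Lebesgue points) is routine.
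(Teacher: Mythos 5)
Your proposal is essentially correct but follows a genuinely different route from the paper. The paper never invokes Lebesgue differentiation: it first proves \eqref{1} for quantile functions with a $C^1$-extension to $\mathbb{I}$ (Riemann sums plus Proposition \ref{quan} to compare $\tau_r^{f|_{J_{n,i}}}$ with $f(\frac{2i-1}{2n})$), then derives the key stability estimate $|n^rd_r(\delta_{{\bf z}_n}^{{\bf u}_n},\mu)^r-n^rd_r(\delta_{\widetilde{{\bf z}}_n}^{{\bf u}_n},\widetilde\mu)^r|\le 2\|f'-\widetilde f'\|_r\|f'+\widetilde f'\|_r^{r-1}$ via an elementary inequality and H\"older, and passes to general $f'\in L^r$ by density of $C^1$ derivatives; the infinite case is handled by truncating $f'$ at level $N$. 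Your argument instead works directly at Lebesgue points of $\rho=\frac{{\rm d}\mu^{-1}}{{\rm d}\lambda}$, using the Hardy--Littlewood maximal inequality (for $r>1$) resp.\ a Scheff\'e/Pratt-type generalized dominated convergence (for $r=1$) for the upper bound, and an affine comparison plus Fatou for the lower bound and the divergent case. This buys a more self-contained, single-pass treatment of all cases at the cost of importing the maximal theorem; the paper's density argument is more elementary and its estimate \eqref{18} is reused verbatim in Step 4 for the asymptotic-optimality claim, which your write-up obtains instead from the coincidence of the two limits.

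One step as written does not survive scrutiny: in the lower bound you anchor the affine comparison at the pointwise value $\rho(c_{n,i})$ and assert that $\frac{1}{\lambda(I_{n,i})}\int_{I_{n,i}}|\rho-\rho(c_{n,i})|\to 0$ ``at a Lebesgue point $t\in I_{n,i}$''. Since $\rho$ is only defined a.e., $\rho(c_{n,i})$ is not controlled by the Lebesgue-point property of $t$ (one can alter $\rho$ on the null set of all midpoints $c_{n,i}$), so this displayed claim is false as stated. The fix is immediate: take $h(s)=\rho(t)(s-c_{n,i})$ with $t$ the fixed Lebesgue point, so that $\|e\|_\infty\le\int_{t-1/n}^{t+1/n}|\rho-\rho(t)|=o(n^{-1})$ and $n^{1+1/r}\|h\|_r=|\rho(t)|/(2(r+1)^{1/r})$, after which Fatou gives the bound exactly as you intend. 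A smaller point: the dominating function should be written $(M\rho)^r$, which lies in $L^1(\mathbb{I})$ for $r>1$ by the maximal theorem, whereas $M(\rho^r)$ is in general only weak-$L^1$; and the whole scheme only needs the reverse triangle inequality plus the symmetry fact $\|h-c\|_r\ge\|h\|_r$ for $h$ odd about the midpoint, both of which are fine.
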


\begin{proof}
For convenience, let $f=F_{\mu}^{-1}|_{]0,1[}$, as well as $J_{n,i}=
[\frac{i-1}{n},\frac{i}{n} ]$ and $x_{n,i}=f (\frac{2i-1}{2n} )$ for
$n\in\mathbb{N}$ and $1\le i\le n$. Note that the non-decreasing
function $f$ is absolutely continuous, by assumption. For the reader's
convenience, the following proof is divided into four steps: First,
\eqref{1} will be established assuming that $f$ has a $C^1$-extension
to $\mathbb{I};$ then \eqref{1} will be shown to hold in general,
regardless of whether both sides are finite (Step 2) or infinite (Step
3); finally, the assertion regarding asymptotically best uniform
approximations will be proved (Step 4).

\medskip

\noindent
{\it Step 1}. Assume $f$ can be extended to a $C^1$-function on
$\mathbb{I}$. Then  
\begin{align*}
  n^rd_r (\delta_{{\bf x}_n}^{{\bf u}_n},\mu )^r
  & = \ n^r\sum\nolimits_{i=1}^n\int_{J_{n,i}} |f(t)-x_{n,i} |^r{\rm d}t\le n^r\sum\nolimits_{i=1}^n (\max\nolimits_{J_{n,i}}f' )^r\int_{J_{n,i}} \left|t-\frac{2i-1}{2n}
   \right|^r{\rm d}t\\
  & =  \frac{1}{2^r(r+1)}\cdot\frac{1}{n}\sum\nolimits_{i=1}^n (\max\nolimits_{J_{n,i}}f' )^r.
\end{align*}
Since $ (f' )^r$ is Riemann integrable, $\lambda(J_{n,i})=1/n$, and similarly
$$
n^rd_r (\delta_{{\bf x}_n}^{{\bf u}_n},\mu )^r\ge\frac{1}{2^r(r+1)}\cdot\frac{1}{n}
\sum\nolimits_{i=1}^n (\min\nolimits_{J_{n,i}}f' )^r,
$$
it follows that $\lim_{n\to\infty}nd_r (\delta_{\bullet}^{{\bf u}_n},\mu )=\frac{1}{2(r+1)^{1/r}}
 (\int_{\mathbb{I}}f'(t)^r{\rm d}t )^{1/r}<+\infty$.
Moreover, $f'$ is uniformly continuous, hence given $\varepsilon>0$,
there exists $N\in\mathbb{N}$ such that 
$$ 
|f'(t)-f'(u) |\le\varepsilon,\quad \forall t,u\in\mathbb{I},\
|t-u|<\frac{1}{N}.
$$ 
Whenever $n\ge N$, therefore, the Mean Value Theorem yields 
$$
\left|f(t)-x_{n,i}-f' \left(\frac{2i-1}{2n} \right)\left(t-\frac{2i-1}{2n}\right)
\right|\le\varepsilon\left|t-\frac{2i-1}{2n}\right|,\quad \forall  t\in
J_{n,i},
$$
and consequently, with $y_{n,i}=\tau_r^{f|_{J_{n,i}}}$,
$$
| y_{n,i}-x_{n,i} |\le\frac
{\varepsilon}{n},\quad  \forall  1\le i\le n,
$$
by Proposition~\ref{quan}. It follows that 
$$
n^rd_r (\delta_{\bullet}^{{\bf u}_n},\delta_{{\bf x}_n}^{{\bf u}_n} )^r=n^rd_r (\delta
_{{\bf y}_n}^{{\bf u}_n},\delta_{{\bf x}_n}^{{\bf u}_n} )^r=n^r\sum\nolimits_{i=1}^n\int_{J_{n,i}}
 |y_{n,i}-x_{n,i} |^r\le \varepsilon^r\, ,
$$ 
and since $\varepsilon>0$ was arbitrary, 
$$
\limsup\nolimits_{n\to\infty} |n^rd_r (\delta_{\bullet}^{{\bf u}_n},\mu )^r-n^rd_r
 (\delta_{{\bf x}_n}^{{\bf u}_n},\mu )^r |\le\lim\nolimits_{n\to\infty}n^rd_r
 (\delta_{\bullet}^{{\bf u}_n},\delta_{{\bf x}_n}^{{\bf u}_n} )^r=0,
$$
which establishes \eqref{1}, with the same finite value on either side.

\medskip

\noindent
{\it Step 2.}
Let the non-decreasing and absolutely continuous function $f$ be
arbitrary, but assume that $f'\in L^r(\mathbb{I})$. Similarly, let 
$\widetilde{\mu}\in\mathcal{P}_r$ be such that $\widetilde{\mu}^{-1}$
is absolutely continuous, with
$\widetilde{f}:=F_{\widetilde{\mu}}^{-1}$ and $\widetilde{f}'\in
L^r(\mathbb{I})$. For $n\in\mathbb{N}$ and $1\le i\le n$, pick any
$t_{n,i}\in J_{n,i}$ and define $z_{n,i}=f(t_{n,i})$,
$\widetilde{z}_{n,i}=\widetilde{f}(t_{n,i})$. Below, it will be shown
that, for any $r\ge1$, 
\begin{equation}\label{18}
 |n^rd_r (\delta_{{\bf z}_n}^{{\bf u}_n},\mu )^r-n^rd_r
 (\delta_{\widetilde{{\bf z} }_n}
^{{\bf u}_n},\widetilde{\mu} )^r |\le2
 \|f'-\widetilde{f}' \|_r \|f'+\widetilde{f}' \|_r^{r-1},\quad \forall n\in\mathbb{N}.
\end{equation}
To see that \eqref{1} follows easily from \eqref{18}, at least under
the current assumption that $f'\in L^r(\mathbb{I})$, fix $r\ge1$ and
w.o.l.g. $0<\varepsilon<\|f'\|_r$. There exists
$\widetilde{\mu}\in\mathcal{P}_r$ such that $\widetilde{f}$ has a
$C^1$-extension to $\mathbb{I}$, and
$\|f'-\widetilde{f}'\|_r<\varepsilon$. With the appropriate $t_n$, let
$\delta_{{\bf z}_n}^{{\bf u}_n}$ be a best uniform $r$-approximation of $\mu$, and $\delta_{{\bf x}_n}^{{\bf u}_n}$ a best uniform $r$-approximation of $\widetilde{\mu}$. For all sufficiently large $n$, Step 1 and \eqref{18} yield
\[\begin{split}
  n^rd_r (\delta_{{\bf z}_n}^{{\bf u}_n},\mu )^r\le&\ n^rd_r (\delta_{{\bf x}_n}^{{\bf u}_n},\mu )^r\le n^rd_r (\delta_{\widetilde{{\bf x}_n}},\mu )^r+2\varepsilon (\varepsilon+2
  \|f'\|_r )^{r-1}\\
  \le&\ \frac{1}{2^r(1+r)} (\|f'\|_r+\varepsilon )^r+\varepsilon+2\varepsilon
   (\varepsilon+2\|f'\|_r )^{r-1},
\end{split}\]but also\[\begin{split}
  n^rd_r (\delta_{{\bf z}_n}^{{\bf u}_n},\mu )^r\ge&\ n^rd_r
  (\delta_{\widetilde{{\bf z}}_n}^{{\bf u}_n},\widetilde{\mu} )^r-2\varepsilon
   (\varepsilon+2 \|f' \|_r )^{r-1}\\
  \ge&\ \frac{1}{2^r(1+r)} ( \|f' \|_r-\varepsilon )^r-\varepsilon-2
  \varepsilon (\varepsilon+2 \|f' \|_r )^{r-1}.
\end{split}\]
Since $\varepsilon>0$ was arbitrary, this establishes \eqref{1}.

It remains to verify \eqref{18}, which only requires the elementary inequality, valid for all $r\ge1$,
\begin{equation}\label{15}
 |a^r-b^r |\le r |a-b | (a^{r-1}+b^{r-1} ),\quad \forall a, b \ge0,
\end{equation}
together with a repeated application of H\"{o}lder's inequality, as
follows: Note first that
\begin{equation}\label{17} 
d_r (\delta_{{\bf z}_n}^{{\bf u}_n},\mu )^r=\sum\nolimits_{i=1}^n\left(\int_{(i-1)/n}^{t_{n,i}}
\left(\int_t^{t_{n,i}}f'(u){\rm d}u\right)^r{\rm
  d}t+\int^{i/n}_{t_{n,i}}\left(\int^t_{t_{n,i}}f'(u){\rm
    d}u\right)^r{\rm d}t\right),
\end{equation} 
and consequently
\begin{align*}
  \left|d_r (\delta_{{\bf z}_n}^{{\bf u}_n},\mu )^r-d_r
    (\delta_{\widetilde{{\bf z}}_n}^
  {{\bf u}_n},\widetilde{\mu} )^r\right| \le \sum_{i=1}^n &\left(  \int_{(i-1)/n}^{t_{n,i}}\left|\left(\int_t^{t_{n,i}}
  f'(u)\, {\rm d}u\right)^r-\left(\int_t^{t_{n,i}}\widetilde{f}'(u)\,
  {\rm d}u\right)^r\right|{\rm d}t\right.\\
& +\left.\int^{i/n}_{t_{n,i}}\left|\left(\int^t_{t_{n,i}}f'(u)\, {\rm d}u\right)^r-\left(\int^t_{t_{n,i}}\widetilde{f}'(u)\, {\rm d}u\right)^r\right|{\rm d}t\right).
\end{align*}
With \eqref{15}, therefore,
\begin{align*}
& \int_{(i-1)/n}^{t_{n,i}}\left|\left(\int_t^{t_{n,i}}f'(u)\, {\rm
      d}u\right)^r-\left(\int_t^{t_{n,i}}\widetilde{f}'(u)\, {\rm d}u\right)^r\right|{\rm d}t\\
& \le r\int_{(i-1)/n}^{t_{n,i}}\left|\int_t^{t_{n,i}} \bigl(
  f'(u)-\widetilde{f}'(u) \bigr) \, {\rm
    d}u\right|\left(\left(\int_t^{t_{n,i}}f'(u){\rm
      d}u\right)^{r-1}+\left(\int_t^{t_{n,i}}\widetilde{f}'(u)\, {\rm
      d}u\right)^{r-1}\right)\, {\rm d}t\\
& \le 2r\int_{(i-1)/n}^{t_{n,i}}\left|\int_t^{t_{n,i}} \bigl(
  f'(u)-\widetilde{f}'(u)\bigr)\, {\rm d}u\right|\left(\int_t^{t_{n,i}}\left(f'(u)+\widetilde{f}'(u)\right){\rm d}u\right)^{r-1}{\rm d}t\\
& \le2r(a_i^-)^{1/r}(b_i^-)^{(r-1)/r},
\end{align*}
where, using H\"{o}lder's inequality again
\begin{align*}
a_i^- & =\int_{(i-1)/n}^{t_{n,i}}\left|\int_t^{t_{n,i}} \bigl(f'(u)-\widetilde{f}'
(u) \bigr)\, {\rm d}u\right|^r{\rm d}t\le\frac{1}{rn^r}\int_{(i-1)/n}^{t_{n,i}}\left|f'(t)-\widetilde{f}'(t)
\right|^r{\rm d}t,\\
b_i^- & =\int_{(i-1)/n}^{t_{n,i}}\left(\int_t^{t_{n,i}} \bigl( f'(u)+\widetilde{f}'
(u) \bigr) \, {\rm d}u\right)^r{\rm d}t\le\frac{1}{rn^r}\int_{(i-1)/n}^{t_{n,i}}\left(f'(t)+\widetilde{f}'(t)
\right)^r{\rm d}t.
\end{align*}
By a completely analogous argument,
$$
\int^{i/n}_{t_{n,i}}\left|\left(\int^t_{t_{n,i}}f'(u)\, {\rm
      d}u\right)^r-\left(\int^t_{t_{n,i}}\widetilde{f}'(u)\, {\rm
      d}u\right)^r\right|{\rm d}t\le2r (a_i^+ )^{1/r} (b_i^+
)^{(r-1)/r},\quad  \forall 1\le i\le n,
$$
where
\begin{align*}
a_i^+& =\int^{i/n}_{t_{n,i}}\left|\int^t_{t_{n,i}} \bigl( f'(u)-\widetilde{f}'(u)
 \bigr) \, {\rm d}u\right|^r{\rm d}t\le\frac{1}{rn^r}\int^{i/n}_{t_{n,i}}\left|f'(t)-\widetilde{f}'(t)\right|^r
{\rm d}t,\\
b_i^+ & =\int^{i/n}_{t_{n,i}}\left(\int^t_{t_{n,i}} \bigl( f'(u)+\widetilde{f}'(u)
\bigr) \, {\rm d}u\right)^r{\rm d}t\le\frac{1}{rn^r}\int^{i/n}_{t_{n,i}}\left(f'(t)+\widetilde{f}'(t)\right)^r
{\rm d}t.
\end{align*}
In summary, therefore, 
\begin{align*}
&n^r\left|d_r (\delta_{{\bf z}_n}^{{\bf u}_n},\mu )^r-d_r
  (\delta_{\widetilde{{\bf z}}_n}
^{{\bf u}_n},\widetilde{\mu} )^r\right|\le 2rn^r\sum\nolimits_{i=1}^n\left( (a_i^- )^{1/r} (b_i^- )^{(r-1)/r}+ (a_i^+ )
^{1/r} (b_i^+ )^{(r-1)/r}\right)\\
&\le2rn^r \left(\sum\nolimits_{i=1}^n (a_i^-+a_i^+ )\right)^{1/r}
\left( \sum\nolimits_{i=1}^n (b_i^
-+b_i^+ ) \right)^{(r-1)/r}\\
&\le 2 r n^r \left( \frac{1}{rn^r}\int_{\mathbb{I}} |f'(t)-\widetilde{f}'(t) |^r
{\rm d}t \right)^{1/r} \left( \frac{1}{rn^r}\int_{\mathbb{I}}
\bigl( f'(t)+\widetilde{f}'(t) \bigr)^r{\rm d}t \right)^{(r-1)/r}\\
&=2 \|f'-\widetilde{f}' \|_r \|f'+\widetilde{f}' \|_r^{r-1},
\end{align*}
which is just \eqref{18}.

\medskip

\noindent
{\it Step 3}. To establish \eqref{1} in case the value on the right is
$+\infty$, assume that $f'\notin L^r(\mathbb{I})$. For
$N\in\mathbb{N}$, let $g_N=\min\{f',N\}$ and, given $C>0$, choose $N$
so large that $\|g_N\|_r^r\ge2^r(1+r)C$. Let $\mu_N$ be a probability
measure with $ (F^{-1}_{\mu_N} )'=g_N$. By \eqref{17}, 
\begin{align*}
d_r (\delta_{{\bf z}_n}^{{\bf u}_n},\mu )^r & \ge\sum\nolimits_{i=1}^n \left( \int_{(i-1)/n}^{t_i}
 \left( \int_t^{t_i}g_N(u)\, {\rm d}u \right)^r{\rm
  d}t+\int_{t_i}^{i/n} \left(\int_{t_i}^tg_N(u)\, {\rm d}u \right)^r{\rm
  d}t \right) \\
& \ge d_r (\delta_{\bullet}^{{\bf u}_n},\mu_N )^r,
\end{align*}
and since Step 2 applies to $\mu_N$,
$$
\liminf\nolimits_{n\to\infty}n^rd_r (\delta_{\bullet}^{{\bf u}_n},\mu )^r\ge \lim\nolimits_{n\to\infty}n^rd_r (\delta_{\bullet}^{{\bf u}_n},\mu_N )^r=\frac{1}{2^r
(r+1)}\|g_N\|_r^r\ge C.
$$
As $C>0$ was arbitrary, $n^rd_r
(\delta_{\bullet}^{{\bf u}_n},\mu )^r\to+\infty$ whenever $f'\notin
L^r(\mathbb{I})$, i.e., \eqref{1} is valid in this case also.

\medskip

\noindent
{\it Step 4.} Finally, to prove the assertion regarding asymptotically
best uniform approximations, assume that $f'\in L^r(\mathbb{I})$. Note
that $ \|f' \|_r>0$ whenever $\mu\neq\delta_a$ for all
$a\in\mathbb{R}$. In this case, given $\varepsilon>0$, pick
$\widetilde{\mu}\in\mathcal{P}_r$ such that
$\widetilde{f}=F_{\widetilde{\mu}}^{-1}$ has a $C^1$-extension to
$\mathbb{I}$ and $ \|f'-\widetilde{f}' \|_r<\varepsilon$. 
By Step~1, $\lim_{n\to\infty}n^rd_r (\delta_{\widetilde{{\bf x}}_n}^{{\bf u}_n},\widetilde{\mu}
 )^r=\frac{ \|\widetilde{f}' \|_r^r}{2^r(r+1)}$, whereas Step~2 guarantees that
$\lim_{n\to\infty}n^rd_r (\delta_{\bullet}^{{\bf u}_n},\mu )^r=\frac{\|f'\|
_r^r}{2^r(r+1)}$, and \eqref{18} yields
$$
 |n^rd_r (\delta_{\widetilde{{\bf x}}_n}^{{\bf u}_n},\widetilde{\mu} )^r-n^r
d_r (\delta_{{\bf x}_n}^{{\bf u}_n},\mu )^r |\le2\varepsilon (1+2\|f'\|_r )
^{r-1}.
$$
Combining these three facts leads to
\begin{align*}
  \limsup\nolimits_{n\to\infty}\frac{d_r (\delta_{{\bf x}_n}^{{\bf u}_n},\mu )^r}{d_r (\delta
  _{\bullet}^{{\bf u}_n},\mu )^r} \le &\limsup\nolimits_{n\to\infty}
  \frac{n^rd_r (\delta_{\bullet}^{{\bf u}_n},\mu )^r+2\varepsilon (1+2 \|f'
   \|_r )^{r-1}}{2^{-r} \|f' \|_r^r(r+1)^{-1}} \\
  \le & \left(1+\frac{\varepsilon}{ \|f' \|_r} \right)^r+2^{r+1}(r+1)\varepsilon
  \frac{ (1+2 \|f' \|_r )^{r-1}}{ \|f' \|_r^r},
\end{align*}
as well as to an analogous lower bound for $\liminf_{n\to\infty}\frac{d_r (\delta_{{\bf x}_n}^{{\bf u}_n},\mu )^r}{d_r (\delta_
{\bullet}^{{\bf u}_n},\mu )^r}$. Since $\varepsilon>0$ was arbitrary, $\lim_{n\to\infty}\frac{d_r (\delta_{{\bf x}_n}^{{\bf u}_n},\mu )}{d_r (\delta_
{\bullet}^{{\bf u}_n},\mu )}=1$, i.e., $ (\delta_{{\bf x}_n}^{{\bf u}_n} )$ is a sequence of asymptotically best uniform $r$-approximations of $\mu$, as claimed.
\end{proof}

The following examples highlight the importance of the absolute
continuity and integrability assumptions, respectively, in
Theorem~\ref{th6}.

\begin{example}\label{eg3}
Let $\mu$ be the inverse of the Cantor probability measure in
Example~\ref{Can}. Explicitly, $\mu$ is purely atomic, with
$\mu\left( \{j2^{-m} \}\right)=3^{-m}$ for every $m\in\mathbb{N}$
and every odd $1\le j\le 2^m$. Note that
$F_{\mu}^{-1} |_{]0,1[}$ simply equals the classical Cantor
function, hence is continuous, in fact,
$\frac{\log2}{\log3}$-H\"{o}lder, and $\frac{{\rm d}\mu^{-1}}{{\rm
    d}\lambda}=0$ a.e.. While Theorem~\ref{th6}, if it did apply,
would seem to suggest that $\lim_{n\to\infty}nd_r (\delta_{\bullet}^{{\bf u}_n},\mu )=0$, a
detailed, elementary analysis shows that this is not the case. In
fact, $\bigl(n^{\alpha}d_r(\delta_{\bullet}^{{\bf u}_n},\mu ) \bigr)$ may
not converge to a finite positive limit for any $r\ge1$ and
$\alpha>0$. More specifically, let $\alpha_r=\frac{1}{r}+ (1-\frac{1}{r} )\frac{\log2}{\log3}$ for $r\ge1$. With this, $3^{\alpha_r}d_r (\delta_{\bullet}^{{\bf u}_{3n}},\mu )=d_r (\delta_{\bullet}
^{{\bf u}_n},\mu )$ for all $n$, and hence
$$
2^{-2+1/r} 3^{-2/r} \le\liminf\nolimits_{n\to\infty}n^{\alpha_r}
d_r (\delta
_{\bullet}^{{\bf u}_n},\mu )=\inf\nolimits_{n\in\mathbb{N}}n^{\alpha_r}d_r (\delta_{
\bullet}^{{\bf u}_n},\mu ),
$$
as well as
$$
\limsup\nolimits_{n\to\infty}n^{\alpha_r}d_r (\delta_{\bullet}^{{\bf u}_n},\mu )=\sup\nolimits_{n\in\mathbb{N}}n^{\alpha_r}d_r(\delta_{\bullet}^{{\bf u}_n},\mu)\le
2^{1/r} .
$$
We suspect the bounded sequence $\bigl( n^{\alpha_r}d_r (\delta_{\bullet}^{{\bf u}_n},\mu ) \bigr)$ 
to be divergent for every $r\ge1$. This illustrates that the conclusion of Theorem~\ref{th6} may fail if $\mu^{-1}$ is not absolutely continuous.
\end{example}

\begin{example}\label{exp}
The integrability assumption also is crucial (for the second
assertion) in Theorem~\ref{th6}. To see this, let $\mu$ be the
standard exponential distribution, where $\frac{{\rm d}\mu}{{\rm
    d}\lambda}\notin L^1(\mathbb{I})$, and \eqref{1} yields
$\lim_{n\to\infty}nd_r (\delta_{\bullet,r}^{{\bf u}_n},\mu )=+\infty$ for
all $r\ge1$, in perfect agreement with the observations in Example \ref{ex2}. Deduce
from a short calculation that 
$$
\sqrt{n}d_2 (\delta^{{\bf u}_n}_{\bullet,1},\mu )=D_2+\cO (n^{-1}) \quad \text{as}\
n\to\infty\, , 
$$ 
where $D_2^2=1+2\sum_{i=1}^{\infty}\left(1-i (1+\log\frac{\sqrt{4i^2-1}}{2i} )
\log\frac{2i+1}{2i-1}\right)\thickapprox1.1749$. Recall from
Example~\ref{ex2} that $\sqrt{n}d_2 (\delta_{\bullet,2}^{{\bf u}_n},\mu
)=C_2+\cO (n^{-1})$ with $C_2^2\thickapprox1.0803$. Thus while $
(\delta_{\bullet,1}^{{\bf u}_n} )$ identifies the correct rate of decay for
$ \bigl( d_2 (\delta_{\bullet,2}^{{\bf u}_n},\mu ) \bigr)$, namely
$\left(n^{-1/2}\right)$, it is {\em not} a sequence of asymptotically
best uniform $2$-approximations of $\mu$, since  
$$
\lim\nolimits_{n\to\infty}\frac{d_2 (\delta_{\bullet,1}^{{\bf u}_n},\mu )}{d_2\left
(\delta_{\bullet,2}^{{\bf u}_n},\mu\right)}=\frac{D_2}{C_2}>1\, . 
$$ 
Similarly, for any $r>1$ it can be shown that
$\lim_{n\to\infty}n^{1/r}d_r (\delta_{\bullet,1}^{{\bf u}_n},\mu )=D_r$ 
with the appropriate constant $D_r>C_r$, and $C_r$ as in Example~\ref{ex2}.\end{example}

\begin{example}\label{exGal}
Let $\mu$ be the standard normal distribution. By
\cite[Thm.1]{GHMR}, $ \bigl( d_2(\delta_{\bullet,2}^{{\bf u}_n},\mu) \bigr)$ decays
like $\left( n^{-1/2}(\log n)^{-1/2}\right) $ along the subsequence
$n=2^k$. Moreover, as pointed out in \cite[Rem.6]{GHMR}, best
uniform $1$-approximations $\delta_{\bullet,1}^{{\bf u}_n}$ converge to
$\mu$ as fast as $\delta_{\bullet,2}^{{\bf u}_n}$ do (w.r.t. $d_2$). In
fact, elementary computations confirm that the sequences
$$
 \bigl( n^{1/r}(\log n)^{\alpha_r}d_r (\delta_{\bullet,r}^{{\bf u}_n} , \mu
 ) \bigr),\  
\bigl( n^{1/r}(\log n)^{\alpha_r}d_r (\delta_{\bullet,1}^{{\bf u}_n} , \mu ) \bigr)
$$
are bounded above and below by positive constants, where
$\alpha_r=-\frac12$ if $r=1$, and $\alpha_r = \frac12 $ if $r>1$. 
Thus, best uniform $1$-approximations converge precisely as fast as do
best uniform $r$-approximations. Together with the previous example, this
suggests that $(\delta_{\bullet, 1}^{{\bf u}_n})$, though perhaps not a
sequence of asymptotically best uniform $r$-approximations, may
nevertheless often capture the correct rate of convergence of $\bigl(
d_r (\delta_{\bullet,r}^{{\bf u}_n}), \mu )\bigr)$, even when $\frac{{\rm
    d}\mu^{-1}}{{\rm d}\lambda}\not \in L^r (\mathbb{I})$.
\end{example}

\begin{rem}
For any (non-degenerate) $\mu\in\mathcal{P}$, \cite[Prop.~A.17]{BL}
asserts that $\mu^{-1}$ is absolutely continuous if and only if ${\rm
  supp}\ \mu$ is connected and $\frac{{\rm d}\mu_a}{{\rm d}\lambda}>0$
a.e. on ${\rm supp}\ \mu$, where $\mu_a$ is the absolutely continuous
part (w.r.t.\ $\lambda$) of $\mu;$ in this case, moreover, 
$\int_{\mathbb{I}}(\frac{{\rm d}\mu^{-1}}{{\rm
    d}\lambda})^r=\int_{{\rm supp}\ \mu}(\frac{{\rm d}\mu_a}{{\rm
    d}\lambda} )^{1-r}$.
\end{rem}

If $F_{\mu}^{-1}$ not even is continuous, then the decay of $ \bigl( d_r
(\delta_{\bullet}^{{\bf u}_n},\mu ) \bigr)$ may be less homogeneous than in
Example~\ref{eg3}. For instance, for the Cantor measure of Example~\ref{Can}, it is not
hard to see that, for any $r\ge1$, both numbers 
$$
\liminf\nolimits_{n\to\infty}n^{\frac{\log3}{\log2}}d_r (\delta_{\bullet}^{{\bf u}_n},
\mu )\ \text{and}\
\limsup\nolimits_{n\to\infty}n^{1/r}d_r (\delta_{\bullet}^{{\bf u}_n},\mu )
$$
are finite and positive. Thus, in general it cannot be expected that
for some $\alpha_r>0$, the sequence $ \bigl( n^{\alpha_r}d_r
(\delta_{\bullet}^{{\bf u}_n},\mu ) \bigr)$ is bounded below and above by
positive constants, let alone convergent. Still, it is possible to
identify a universal lower bound for $\bigl (d_r
  (\delta_{\bullet}^{{\bf u}_n},\mu )\bigr)$ with $\mu\in\mathcal{P}_r$:
But for trivial exceptions, this sequence never decays faster than 
$\left(n^{-1}\right)$.

\begin{theorem}\label{low}
 Assume that $\mu\in\mathcal{P}_r$ for some $r\ge1$.  Then
\begin{equation}\label{12}
\limsup\nolimits_{n\to\infty}nd_r (\delta_{\bullet}^{{\bf u}_n},\mu )>0,
\end{equation} 
unless $\mu=\delta_a$ for some $a\in\mathbb{R}$.
\end{theorem}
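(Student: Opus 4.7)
The plan is to reduce to the case $r=1$ and then derive an exact Fubini-based formula for the best uniform $1$-approximation error, reducing the theorem to an equidistribution statement about the sequences $(\langle nu\rangle)_{n\ge 1}$.

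H\"older's inequality on $\mathbb{I}$ yields $d_1\le d_r$ for $r\ge 1$, hence $d_r(\delta_\bullet^{{\bf u}_n},\mu)\ge d_1(\delta_\bullet^{{\bf u}_n},\mu)$. Since $\delta_{\bullet,1}^{{\bf u}_n}$ minimizes $d_1$ among all uniform $n$-atom measures, $d_1(\delta_\bullet^{{\bf u}_n},\mu)\ge d_1(\delta_{\bullet,1}^{{\bf u}_n},\mu)$; thus it suffices to prove $\limsup_n n\,d_1(\delta_{\bullet,1}^{{\bf u}_n},\mu)>0$. By Remark~\ref{re0}(i), one choice of best uniform $1$-approximation is $\nu_n:=\frac{1}{n}\sum_{i=1}^n\delta_{F_{\mu}^{-1}((2i-1)/(2n))}$; combining \eqref{0} with Fubini's theorem applied to the Stieltjes measure $\mu^{-1}$ induced by $F_{\mu}^{-1}$, a direct computation yields
$$
d_1(\nu_n,\mu)=\frac{1}{n}\int_{]0,1[}\min\bigl(\langle nu\rangle,1-\langle nu\rangle\bigr)\,{\rm d}\mu^{-1}(u).
$$
Since the integrand is $\ge 1/4$ on $S_n:=\{u\in\,]0,1[\,:\,\langle nu\rangle\in[1/4,3/4]\}$, this gives $n\,d_1(\nu_n,\mu)\ge\frac{1}{4}\mu^{-1}(S_n)$.

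It remains to show $\limsup_n\mu^{-1}(S_n)>0$. Since $\mu\ne\delta_a$, $\mu^{-1}(]0,1[)>0$, so by inner regularity one can fix a compact set $K\subset\,]0,1[\,$ with $0<\mu^{-1}(K)<+\infty$. For each $u\in K$, the Ces\`aro average $\frac{1}{N}\sum_{n=1}^N{\bf 1}_{[1/4,3/4]}(\langle nu\rangle)$ converges as $N\to\infty$ to $1/2$ if $u$ is irrational (Weyl equidistribution), and to $k/q$ with $k\ge 1$ if $u=p/q$ is in lowest terms (necessarily $q\ge 2$, since $u\in\,]0,1[\,$); either way the limit $\psi(u)$ is strictly positive. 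By the Dominated Convergence Theorem on the finite-measure space $(K,\mu^{-1}|_K)$,
$$
\lim_{N\to\infty}\frac{1}{N}\sum_{n=1}^N\mu^{-1}(S_n\cap K)=\int_K\psi(u)\,{\rm d}\mu^{-1}(u)>0.
$$
Since $\mu^{-1}(S_n\cap K)\le\mu^{-1}(K)<+\infty$ is uniformly bounded in $n$, Ces\`aro convergence to a positive limit forces $\limsup_n\mu^{-1}(S_n\cap K)>0$, and $\mu^{-1}(S_n)\ge\mu^{-1}(S_n\cap K)$ then yields $\limsup_n n\,d_r(\delta_\bullet^{{\bf u}_n},\mu)>0$.

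The step I expect to be the main obstacle is the Fubini-based identity above: while elementary, it requires careful bookkeeping of the boundary contributions at the lattice points $i/n$ and the midpoints $(2i-1)/(2n)$, especially when $F_\mu^{-1}$ has jumps (equivalently, when ${\rm supp}\,\mu$ has gaps so that $\mu^{-1}$ has atoms). The other ingredients---H\"older's inequality, the distance-to-integer lower bound, and Weyl equidistribution---are entirely classical.
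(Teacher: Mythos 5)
Your proof is correct, but it follows a genuinely different route from the paper's. The paper also reduces to $r=1$ via $d_1\le d_r$, but then avoids any exact formula: it compares the best uniform $1$-approximations with $2n$ and $2n+1$ atoms directly, lower-bounding $2n\,d_1(\delta_{\bullet}^{{\bf u}_{2n}},\delta_{\bullet}^{{\bf u}_{2n+1}})$ by a telescoping/Riemann-sum estimate over the interlacing midpoint grids $\frac{2i-1}{4n}$ and $\frac{2i-1}{4n+2}$, and concludes via the triangle inequality that $\limsup_n n\,d_1(\delta_{\bullet}^{{\bf u}_n},\mu)\ge\frac12\int_0^{1/2}\bigl(F_{\mu}^{-1}(t+\frac12)-F_{\mu}^{-1}(t)\bigr)\,{\rm d}t$, an explicit positive constant. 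Your approach instead rests on the exact identity $d_1(\nu_n,\mu)=\frac1n\int_{]0,1[}\min\bigl(\langle nu\rangle,1-\langle nu\rangle\bigr)\,{\rm d}\mu^{-1}(u)$, which does hold: the Fubini/Tonelli bookkeeping you were worried about works out cleanly because the half-open convention $\mu^{-1}(]t,u])=F_{\mu}^{-1}(u)-F_{\mu}^{-1}(t)$ assigns weight exactly $\frac1n\min(\langle nu\rangle,1-\langle nu\rangle)$ to every $u$, including atoms of $\mu^{-1}$ sitting at the grid points $i/n$ (weight $0$ on both sides of the identity) and at the midpoints. The remaining steps---positivity of the Ces\`aro limit of ${\bf 1}_{[1/4,3/4]}(\langle nu\rangle)$ for every $u\in\,]0,1[$ (Weyl for irrational $u$, periodicity with at least one lattice point $j/q\in[1/4,3/4]$ for rational $u$, since $[q/4,3q/4]$ has length $\ge1$), dominated convergence on a compact $K$ of finite positive $\mu^{-1}$-measure, and the fact that a nonnegative sequence with positive Ces\`aro limit has positive $\limsup$---are all sound. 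What the trade-off buys: your identity is an attractive formula in its own right and makes transparent \emph{why} the bound holds (equidistribution of $\langle nu\rangle$), whereas the paper's argument is more elementary (no equidistribution theorem) and yields a concrete, $\mu$-dependent lower bound for the $\limsup$ rather than mere positivity.
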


\begin{proof}
Denote $F_{\mu}^{-1}$ by $f$ for convenience, and for every $n\in\mathbb{N}$, let $a_i=f (\frac{2i-1}{4n} )$  and $b_i=f (\frac{2i-1}{4n+2} )$ for $1\le i\le2n$.
Then $b_1\le a_1\le b_2\le a_2\cdots\le b_{2n}\le a_{2n}\le b_{2n+1}$,
and 
\begin{align*}
  & 2nd_1  (\delta_{\bullet}^{{\bf u}_{2n}},\delta_{\bullet}^{{\bf u}_{2n+1}} )\\
  & =  2n\sum\nolimits_{i=1}^{2n}\left((a_i-b_i) \left(
      \frac{i}{2n+1}-\frac{i-1}{2n} \right)+(b_
  {i+1}-a_i) \left( \frac{i}{2n}-\frac{i}{2n+1} \right)\right)\\
  & =  \frac{1}{2n+1}\sum\nolimits_{i=1}^{2n} \bigl( (2n+1-i)(a_i-b_i)+i(b_{i+1}-a_i) \bigr)\\
  &\ge  \frac{1}{2n+1} \left(
    \sum\nolimits_{i=1}^{n} \!\!
    i(b_{i+1}-b_i)+ \! \sum\nolimits_{i=n+1}^{2n} \!
    \bigl( (2n   +1-2i)(a_i-b_i)+i(b_{i+1}-b_i) \bigr) \right) \\
  & =  \frac{1}{2n+1} \left(
    \sum\nolimits_{i=1}^{n} \!\!
    i(b_{i+1}-b_i)+\! \sum\nolimits_{i=n+1}^{2n} \! \bigl( (2n+1
  -i)(b_{i+1}-b_i)+(2i-2n-1)(b_{i+1}-a_i) \bigr) \right) \\
  & \ge  \frac{1}{2n+1} \left( \sum\nolimits_{i=1}^{n} \!\!
    i(b_{i+1}-b_i)+\! \sum\nolimits_{i=n+1}^{2n} \! (2n+1-i)
  (b_{i+1}-b_i) \right) \\
  & =
  \sum\nolimits_{i=n+2}^{2n+1}\frac{b_i}{2n+1}-\sum\nolimits_{i=1}^n\frac{b_i}{2n+1}\,
  .
  \end{align*}
Since $f$ is locally Riemann integrable on $]0,1[$, it follows that 
$$
\limsup\nolimits_{n\to\infty}2nd_1 (\delta_{\bullet}^{{\bf u}_{2n}},\delta_{\bullet}^{u
_{2n+1}} )\ge\int_0^{1/2} \bigl(f (t+{\textstyle\frac{1}{2}} )-f(t)
\bigr) \, {\rm d}t,
$$ 
and consequently
\begin{align*}
\limsup\nolimits_{n\to\infty}nd_r (\delta_{\bullet}^{{\bf u}_n},\mu )\ge &  \limsup\nolimits_{n\to\infty}nd_1 (\delta_{\bullet}^{{\bf u}_n},\mu )\ge{\textstyle\frac{1}{2}}
\limsup\nolimits_{n\to\infty}2nd_1 (\delta_{\bullet}^{{\bf u}_{2n}},\delta_{\bullet}^{{\bf u}_{2n
+1}} ) \\
\ge &\ {\textstyle\frac{1}{2}} \int_0^{1/2} \bigl(f
(t+{\textstyle\frac{1}{2}}  )-f(t) \bigr) \, {\rm d}t>0
\end{align*}
unless $f$ is constant, i.e., unless $\mu=\delta_a$ for some $a\in\mathbb{R}$.
\end{proof}

It is natural to ask whether Theorem~\ref{low} has a counterpart in
that there also exists a universal {\em upper} bound on $\bigl( d_r\left(\delta_{\bullet}^{{\bf u}_n},\mu\right) \bigr)$. In general, this is
not the case: As an immediate consequence of Theorem~\ref{slow} below,
given $r\ge1$ and any sequence $(a_n)$ of positive real numbers with
$\lim_{n\to\infty}a_n=0$, there exists $\mu\in\mathcal{P}_r$ such that
$d_r (\delta_{\bullet}^{{\bf u}_n},\mu )\ge a_n$, for all
$n\in\mathbb{N}$. Under additional assumptions, however, an upper
bound on $ \bigl( d_r (\delta_{\bullet}^{{\bf u}_n},\mu ) \bigr)$ can be established.

\begin{theorem}\label{th5-1}
Assume that $\mu\in\mathcal{P}_r$ for some $r\ge1$.
\begin{enumerate}
\item If $\mu\in\mathcal{P}_s$ with $s>r$ then $\lim_{n\to\infty}n^{1/r-1/s}d_r (\delta_{\bullet}^{{\bf u}_n},\mu )=0$.
\item If ${\rm supp}\ \mu$ is bounded then
  $\limsup_{n\to\infty}n^{1/r}d_r (\delta_{\bullet}^{{\bf u}_n},\mu
  )<+\infty$. 
\end{enumerate}
\end{theorem}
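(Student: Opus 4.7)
The plan is to establish (ii) first, then deduce (i) from (ii) via a truncation argument.

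For (ii), I would use the explicit trial measure $\delta_{{\bf x}_n}^{{\bf u}_n}$ with $x_{n,i}=F_\mu^{-1}\bigl((2i-1)/(2n)\bigr)$ and bound the error directly. Setting $\omega_i:=F_\mu^{-1}(i/n)-F_\mu^{-1}((i-1)/n)$ (with one-sided limits when $i=1$ or $i=n$) and $D:=\max{\rm supp}\,\mu-\min{\rm supp}\,\mu<+\infty$, the monotonicity of $F_\mu^{-1}$ yields $|F_\mu^{-1}(t)-x_{n,i}|\le\omega_i$ for every $t\in J_{n,i}:=[(i-1)/n,i/n]$, while telescoping gives $\sum_{i=1}^n\omega_i=D$ and trivially $\omega_i\le D$. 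Hence
$$
d_r(\delta_{\bullet}^{{\bf u}_n},\mu)^r\le d_r(\delta_{{\bf x}_n}^{{\bf u}_n},\mu)^r =\sum_{i=1}^n\int_{J_{n,i}}|F_\mu^{-1}(t)-x_{n,i}|^r\,{\rm d}t\le\frac{1}{n}\sum_{i=1}^n\omega_i^r\le\frac{D^{r-1}}{n}\sum_{i=1}^n\omega_i\le\frac{D^r}{n},
$$
so $\limsup_{n\to\infty}n^{1/r}d_r(\delta_{\bullet}^{{\bf u}_n},\mu)\le D<+\infty$, which is (ii).

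For (i), I would couple (ii) with a truncation of $\mu$. Given $R>0$, let $T_R:\overline{\mathbb{R}}\to[-R,R]$ be the clipping map $T_R(x):=\max\{-R,\min\{R,x\}\}$ and $\mu_R:=\mu\circ T_R^{-1}$ the corresponding push-forward; a direct check from the definitions shows that $F_{\mu_R}^{-1}=T_R\circ F_\mu^{-1}$, so ${\rm supp}\,\mu_R\subset[-R,R]$. Using $|F_\mu^{-1}|^{r-s}\le R^{r-s}$ on the set $\{|F_\mu^{-1}|>R\}$ (here $r<s$ is essential),
$$
d_r(\mu_R,\mu)^r=\int_{\{|F_\mu^{-1}|>R\}}\bigl(|F_\mu^{-1}(t)|-R\bigr)^r\,{\rm d}t\le R^{r-s}T(R),
$$
where $T(R):=\int_{\{|F_\mu^{-1}|>R\}}|F_\mu^{-1}|^s\,{\rm d}t\to 0$ as $R\to\infty$ by dominated convergence (since $F_\mu^{-1}\in L^s(\mathbb{I})$). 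The triangle inequality, the optimality of $\delta_{\bullet}^{{\bf u}_n}(\mu)$, and part (ii) applied to $\mu_R$ together yield
$$
d_r(\delta_{\bullet}^{{\bf u}_n}(\mu),\mu)\le d_r(\delta_{\bullet}^{{\bf u}_n}(\mu_R),\mu_R)+d_r(\mu_R,\mu)\le 2R\,n^{-1/r}+R^{1-s/r}T(R)^{1/r}.
$$

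To conclude, given $\varepsilon>0$ I would set $R=R_n:=\varepsilon n^{1/s}$; this is the unique scaling that balances both terms above at the critical order $n^{1/s-1/r}$ appearing in (i). Multiplying through by $n^{1/r-1/s}$,
$$
n^{1/r-1/s}d_r(\delta_{\bullet}^{{\bf u}_n}(\mu),\mu)\le 2\varepsilon+\varepsilon^{1-s/r}T(R_n)^{1/r},
$$
and since $T(R_n)\to 0$ as $n\to\infty$, $\limsup_{n\to\infty}n^{1/r-1/s}d_r(\delta_{\bullet}^{{\bf u}_n}(\mu),\mu)\le 2\varepsilon$; letting $\varepsilon\downarrow 0$ gives (i). The only mildly delicate ingredient is the identity $F_{\mu_R}^{-1}=T_R\circ F_\mu^{-1}$ (straightforward from the definitions), while the rest is essentially H\"older/Markov-type tail control combined with the critical scaling $R_n\sim n^{1/s}$.
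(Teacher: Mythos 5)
Your proof is correct, but it follows a genuinely different route from the paper's, and the two are worth contrasting. The paper proves (i) and (ii) simultaneously by one computation on the quantile function $f=F_\mu^{-1}$: it bounds $d_r(\delta_{{\bf x}_n}^{{\bf u}_n},\mu)^r$ by $\int(f(t+\frac{1}{4n})-f(t-\frac{1}{4n}))^r\,{\rm d}t$, splits this integral at $t_0=F_\mu(0)$, and after a telescoping manipulation arrives at $a_n+(2^{r-1}-1)b_n$, where $a_n$ collects the integrals of $|f|^r$ over the two end-intervals $[0,\frac{1}{2n}]$ and $[1-\frac{1}{2n},1]$ and $b_n$ is a contribution near $t_0$ with $(nb_n)$ bounded; part (i) then follows by applying H\"older's inequality directly to the tail terms in $a_n$ (using $f\in L^s(\mathbb{I})$), and part (ii) from boundedness of $f$. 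You instead prove (ii) by an elementary oscillation/total-variation bound ($\sum_i\omega_i^r\le D^{r-1}\sum_i\omega_i=D^r$), which is shorter and cleaner than the paper's telescoping, and you then deduce (i) from the quantitative form of (ii) via truncation: the clipped measure $\mu_R$ satisfies $d_r(\mu_R,\mu)^r\le R^{r-s}T(R)$, and the scaling $R_n=\varepsilon n^{1/s}$ balances the two error terms. All the ingredients check out: the identity $F_{\mu_R}^{-1}=T_R\circ F_\mu^{-1}$ is indeed straightforward from \eqref{00} and the right-continuity of $F_\mu$; the comparison $d_r(\delta_{\bullet}^{{\bf u}_n}(\mu),\mu)\le d_r(\delta_{\bullet}^{{\bf u}_n}(\mu_R),\mu)$ is legitimate because $\delta_{\bullet}^{{\bf u}_n}(\mu_R)$ is an admissible competitor in the minimization defining $\delta_{\bullet}^{{\bf u}_n}(\mu)$; and your version of (ii) gives the non-asymptotic bound $d_r(\delta_{\bullet}^{{\bf u}_n}(\mu_R),\mu_R)\le 2Rn^{-1/r}$ valid for every $n$, which is exactly what is needed since $R=R_n$ varies with $n$. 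What each approach buys: the paper's single estimate is self-contained and exhibits the precise location (the endpoints of $\mathbb{I}$ and the point $t_0$) where the rate can degrade, whereas your argument is more modular, makes transparent why the threshold exponent $1/r-1/s$ arises (as the balance point of $Rn^{-1/r}$ against $R^{1-s/r}$), and would generalize readily to other moment-type tail conditions.
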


\begin{proof}
Again, for convenience, let $f=F_{\mu}^{-1}$, and $x_{n,i}=f
(\frac{2i-1}{2n} )$ for all $n\in\mathbb{N}$ and $1\le i\le n$. With
$t_0=F_{\mu}(0)$, assume w.o.l.g.\ that $0<t_0<1$. (The cases $t_0=0$
and $t_0=1$ are completely analogous.) 
Recall that $f$ is non-decreasing and right-continuous,
$(t-t_0)f(t)\ge0$ for all $t\in\mathbb{I}$, 
and $0\le f(t_0),\ -f(t_0-)<+\infty$. For all sufficiently large $n$, therefore,
\begin{align*}
d_r & ( \delta_{\bullet}^{{\bf u}_n}  ,\mu )^r  \le d_r (\delta_{{\bf x}_n}^{{\bf u}_n},\mu )^r
     =\sum\nolimits_{i=1}^n\int_{\frac{i-1}{n}}^{\frac{2i-1}{2n}} \left( \bigl(x_{n,i}-f(t)
      \bigr)^r+ \left( f \left( t+\frac{1}{2n} \right)-x_{n,i}
      \right)^r \right)  {\rm d}t\\
& \le \sum\nolimits_{i=1}^n\int_{\frac{i-1}{n}}^{\frac{2i-1}{2n}} \left(f \left(t+\frac{1}{2n}
 \right)-f(t) \right)^r\!\! {\rm d}t\le\int_{\frac{1}{4n}}^{1-\frac{1}{4n}} \left(f \left(t+\frac{1}{4n} \right)-f \left(t-
\frac{1}{4n} \right) \right)^r \!\! {\rm d}t\\
& = \int_{\frac{1}{4n}}^{t_0-\frac{1}{4n}} \left( \left|f \left(t-\frac{1}{4n} \right) \right|-
 \left|f \left( t+\frac{1}{4n} \right) \right| \right)^r \!\! {\rm d}t\\
&\quad +\int_{t_0-\frac{1}{4n}}^{t_0+\frac{1}{4n}} \left(f \left(t+\frac{1}{4n} \right)+ 
\left|f \left(t-\frac{1}{4n} \right) \right| \right)^r \!\! {\rm
d}t+\int_{t_0+\frac{1}{4n}}^{1-\frac{1}{4n}} \left(f
\left(t+\frac{1}{4n} \right)-f \left(t-\frac{1}{4n} \right) \right)^r
\!\! {\rm d}t\\
&\le \int_0^{t_0-\frac{1}{2n}} |f(t) |^r{\rm
  d}t-\int_{\frac{1}{2n}}^{t_0} |f(t) |^r{\rm
  d}t+2^{r-1}\int_{t_0-\frac{1}{2n}}^{t_0+\frac{1}{2n}} |f(t) |^r{\rm
  d}t\\
& \quad +\int_{t_0+\frac{1}{2n}}^1 |f(t) |^r{\rm
  d}t-\int_{t_0}^{1-\frac{1}{2n}} |f(t) |^r{\rm d}t\\
&= \ a_n+ (2^{r-1}-1 )b_n,
\end{align*}
where the numbers $a_n, b_n$ are given by
\[a_n=\int_0^{\frac{1}{2n}} |f(t) |^r{\rm d}t+\int^1_{1-\frac{1}{2n}}
|f(t) |^r{\rm d}t\quad 
\text{and}\quad
b_n=\int_{t_0-\frac{1}{2n}}^{t_0+\frac{1}{2n}} |f(t) |^r{\rm d}t,\] respectively.
Note that
$$
0\le nb_n\le\max \left\{f \left(t_0+\frac{1}{2n} \right),-f
  \left(t_0-\frac{1}{2n} \right) \right\}^r\to
\max \{f(t_0),-f(t_0-) \}^r\quad \text{as}\ n\to\infty,
$$
and hence $ (nb_n )$ is bounded.

{\rm (i)} If $\mu\in\mathcal{P}_s$ for some $s>r$ then, by virtue of
H\"{o}lder's inequality, 
$$
0\le a_n\le \left( \left(\int_0^{\frac{1}{2n}} |f(t)
|^s{\rm d}t \right)^{r/s}+ \left(\int^1_{1-\frac{1}{2n}} |f(t) |^s{\rm
  d}t\right)^{r/s} \right) 2^{r/s}n^{r/s-1},
$$
which shows that
$\lim_{n\to\infty}n^{1-r/s}a_n=0$. It follows that 
$$
0\le
n^{1-r/s}d_r\left(\delta_{\bullet}^{{\bf u}_n},\mu\right)^r\le n^{1-r/s}a_n+
(2^{r-1}-1 )n^{1-r/s}b_n\to0\quad  \text{as}\ n\to\infty,
$$
and hence
$\lim_{n\to\infty}n^{1/r-1/s}d_r\left(\delta_{\bullet}^{{\bf u}_n},\mu\right)=0$,
as claimed.

{\rm (ii)} If ${\rm supp}\ \mu$ is bounded then ${\rm
  esssup}_{\mathbb{I}}|f|$ is finite. In this case, $ (na_n )$ is
bounded, and so is $ \bigl(n^{1/r}d_r\left(\delta_{\bullet}^{{\bf u}_n},\mu\right) \bigr)$.
\end{proof}

\begin{rem}\label{re3}
(i) Boundedness of ${\rm supp}\ \mu$ is essential in
Theorem~\ref{th5-1}(ii), as evidenced, e.g., by Example~\ref{exp} for
$r=1$. Notice, however, that the conclusion of Theorem~\ref{th5-1}(ii) remains valid in this example whenever $r>1$.

(ii) If ${\rm supp}\ \mu$ is disconnected, and hence  $F_{\mu}^{-1}$
is discontinuous at some $0<t<1$, then there exists $(n_k)$ such that
$\langle n_kt\rangle \in [1/3,2/3 ]$ for all $k$. For all sufficiently
large $k$, therefore, 
\begin{align*}
  n_kd_r (\delta_{\bullet}^{{\bf u}_{n_k}},\mu )^r & \ge \
  n_k\min\nolimits_{c\in\mathbb{R}} \int^{ (\lfloor n_kt\rfloor +1 )/n_k}_{\lfloor n_kt\rfloor
  /n_k} |F_{\mu}^{-1}(s)-c |^r{\rm d}s\\
  & \ge  \min\nolimits_{c\in [F_{\mu}^{-1}(t-),F_{\mu}^{-1}(t) ]}
  {\textstyle \frac{1}{3}}
   \Bigl( \bigl(F_{\mu}^{-1}(t)-c \bigr)^r+ \bigl(c-F_{\mu}^{-1}(t-)
   \bigr)^r \Bigr) \\
  &\ge  2^{1-r}3^{-1} \bigl(F_{\mu}^{-1}(t)-F_{\mu}^{-1}(t-) \bigr)^r \, .
\end{align*}
Hence \eqref{12} can be strengthened to $\limsup_{n\to\infty}n^{1/r}d_r (\delta_{\bullet}^{{\bf u}_n},\mu )>0$ whenever ${\rm supp}\
\mu$ is disconnected. In fact, by Theorem~\ref{th5-1}(ii), $ (n^{-1/r}
)$ is the sharp upper rate of $ \bigl( d_r (\delta_{\bullet}^{{\bf u}_n},\mu
) \bigr)$ in case ${\rm supp}\ \mu$ is bounded and disconnected, a situation observed for instance for the Cantor measure of Example~~\ref{Can}.

(iii) By utilizing a {\em uniform decomposition approach}, a
multi-dimensional analogue of Theorem \ref{th5-1} is established in
\cite{C}, with the threshold rates of convergence depending both on $r$ and
on the dimension of the ambient (Euclidean) space.
\end{rem}

\subsection{Best approximations}\label{subbest}
This final subsection relates the results presented earlier to the
classical theory of best (unconstrained) approximations. 
Let $\mu\in\mathcal{P}_r$ for some $r\ge1$. Given $n\in\mathbb{N}$,
call the probability measure $\delta_{{\bf x}}^{{\bf p}}$ with ${\bf
  x}\in \Xi_n$ and ${\bf p} \in
\Pi_n$ a {\em best $r$-approximation} of $\mu$ if 
$$
d_r (\delta_{{\bf x}}^{{\bf p}},\mu )\le d_r (\delta_{\bf y}^{{\bf
    q}},\mu ),\quad \forall {\bf y} \in \Xi_n,\
{\bf q} \in\Pi_n.
$$
Denote by $\delta_{\bullet}^{\bullet,n}$ any best $r$-approximation of
$\mu$. (As before, the dependence of $\delta_{\bullet}^{\bullet,n}$ on
$r$ is made explicit by a subscript only where necessary to avoid ambiguities.)
It is well known that best $r$-approximations exist always.

\begin{prop}
\label{th4-0}{\rm\cite[Sec.~4.1]{GL}}.
Assume that $\mu\in\mathcal{P}_r$ for some $r\ge1$. For every
$n\in\mathbb{N}$, there exists a best $r$-approximation
$\delta_{\bullet}^{\bullet,n}$ of $\mu$. If $\#\ {\rm supp}\ \mu\ge n$
then $\#\ {\rm supp}\ \delta_{\bullet}^{\bullet,n}=n$.
\end{prop}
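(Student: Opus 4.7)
My plan is to recast the problem via the quantile identity \eqref{0} and exploit the machinery of Sections~\ref{Mono}--\ref{Lr}. For any $\delta_{{\bf x}}^{{\bf p}}$ with ${\bf x}\in\Xi_n$ and ${\bf p}\in\Pi_n$, the quantile function $F_{\delta_{{\bf x}}^{{\bf p}}}^{-1}$ is a non-decreasing step function on $\mathbb{I}$ taking the value $x_i$ on $]P_{i-1},P_i]$, whence
$$
d_r(\delta_{{\bf x}}^{{\bf p}},\mu)^r=\sum_{i=1}^n\int_{P_{i-1}}^{P_i}|F_\mu^{-1}(t)-x_i|^r\,{\rm d}t\, .
$$
By Corollary~\ref{co2}, for any fixed ${\bf P}=(P_1,\cdots,P_{n-1})$ the optimal values are $x_i=\tau_r^{f_i}$ with $f_i=F_\mu^{-1}|_{[P_{i-1},P_i]}$, so the problem reduces to minimizing
$$
\Phi({\bf P})=\sum_{i=1}^n\int_{P_{i-1}}^{P_i}|F_\mu^{-1}(t)-\tau_r^{f_i}|^r\,{\rm d}t
$$
over the compact simplex $\Sigma_n=\{{\bf P}\in\R^{n-1}:0\le P_1\le\cdots\le P_{n-1}\le 1\}$ (with the convention $P_0=0$, $P_n=1$). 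Continuity of $\Phi$, including on the boundary where pieces become degenerate and contribute zero, will follow from the $L^r$-integrability of $F_\mu^{-1}$ together with continuity of $\tau_r$ in its argument function (in the spirit of Lemma~\ref{le4}). Compactness then yields a minimizer ${\bf P}^*$, and hence a best $r$-approximation $\delta_{\bullet}^{\bullet,n}$.

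For the second assertion, I would argue by contradiction: suppose $\#\ {\rm supp}\ \mu\ge n$ but $\#\ {\rm supp}\ \delta_{\bullet}^{\bullet,n}=m<n$. The minimizing partition ${\bf P}^*$ then has at most $m$ non-degenerate pieces on which the values $\tau_r^{f_i^*}$ are pairwise distinct. Since $F_\mu^{-1}$ attains at least $n>m$ distinct values on $]0,1[$ (as ${\rm supp}\ \mu$ lies in the closure of its range), some non-degenerate piece $[P_{j-1}^*,P_j^*]$ must carry a non-constant restriction $f_j^*$. I would then insert an additional break point $c\in\, ]P_{j-1}^*,P_j^*[$, producing a partition with $m+1\le n$ pieces, and show the strict decrease
$$
\int_{P_{j-1}^*}^{P_j^*}|f_j^*-\tau_r^{f_j^*}|^r\,{\rm d}t>\int_{P_{j-1}^*}^{c}|f_j^*-\tau_r^{g_1}|^r\,{\rm d}t+\int_{c}^{P_j^*}|f_j^*-\tau_r^{g_2}|^r\,{\rm d}t\, ,
$$
where $g_1=f_j^*|_{[P_{j-1}^*,c]}$ and $g_2=f_j^*|_{[c,P_j^*]}$; this contradicts minimality of ${\bf P}^*$.

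The hard part will be verifying the displayed strict inequality through an appropriate choice of $c$. For $r>1$, strict convexity (Theorem~\ref{le2}(iii)) makes this essentially automatic: as $c$ varies in $]P_{j-1}^*,P_j^*[$, the optimal constants $\tau_r^{g_1}$ and $\tau_r^{g_2}$ vary continuously with $c$ and cannot both coincide with $\tau_r^{f_j^*}$ unless $f_j^*$ is essentially constant, which was excluded. For $r=1$, the non-uniqueness of $\tau_1^f$ (Theorem~\ref{le2}(ii)) calls for a more delicate selection in which $c$ is chosen so that $\tau_1^{f_j^*}$ escapes the intersection $B^{g_1}\cap B^{g_2}$; the non-constancy of $f_j^*$ combined with Lemma~\ref{le5} should guarantee the existence of such a $c$.
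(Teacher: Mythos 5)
The paper itself gives no proof of Proposition~\ref{th4-0}: it is quoted from \cite[Sec.~4.1]{GL}, where existence is obtained by a compactness argument on the set of $n$ \emph{locations} together with the Voronoi description of the optimal weights, and the cardinality claim is proved by showing that an optimal set of fewer than $n$ centers can be strictly improved by adjoining a point. Your proposal is therefore a genuinely different route, dual to the classical one and very much in the spirit of Sections~\ref{Mono}--\ref{sec5} of the paper: you parametrize by the partition $0=P_0\le\cdots\le P_n=1$ of $\mathbb{I}$, eliminate the locations via Corollary~\ref{co2}, and minimize the reduced functional $\Phi$ over a compact simplex. What this buys is that the cardinality statement becomes a corollary of a single ``splitting strictly helps'' inequality, proved with the same tools (Theorem~\ref{le2}, Lemma~\ref{le5}) that drive Theorems~\ref{th2} and~\ref{th1}; what it costs is that the analytic work migrates into the regularity of $\Phi$ and into the $r=1$ degeneracies. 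The outline is correct, but three points need to be made honest. First, continuity of $\Phi$ on the closed simplex is not free: $h(a,b)=\min_t\int_a^b|F_\mu^{-1}-t|^r$ is an infimum of functions continuous in $(a,b)$, hence a priori only \emph{upper} semicontinuous, which is the wrong direction for minimization; you must check that the minimizing $t$ stays locally bounded away from degeneracies and that $h(a,b)\to0$ as $b-a\to0$ even when $F_\mu^{-1}$ is unbounded near $0$ or $1$ (this uses $F_\mu^{-1}\in L^r(\mathbb{I})$ and monotonicity). You should also record that the optimal $x_i=\tau_r^{f_i}$ are automatically ordered, by Theorem~\ref{le2}(i) and monotonicity of $F_\mu^{-1}$, so the resulting minimizer genuinely lies in $\Xi_n\times\Pi_n$. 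Second, for $r>1$ the clean argument for strict decrease is not continuity in $c$ but separation: Theorem~\ref{le2}(i) gives $\tau_r^{g_1}\le f_j^*(c-)\le f_j^*(c+)\le\tau_r^{g_2}$, and $\tau_r^{g}={\rm esssup}_I\,g$ forces $g$ to be a.e.\ constant by \eqref{phid}; hence $\tau_r^{g_1}=\tau_r^{g_2}$ forces $f_j^*$ a.e.\ constant, so \emph{any} interior $c$ gives strict improvement. Third, for $r=1$ the criterion you state is not quite right: equality of split and unsplit errors holds if and only if \emph{some} $t$ is simultaneously optimal for both halves, i.e.\ if and only if $B^{g_1}\cap B^{g_2}\neq\varnothing$; it is not enough that $\tau_1^{f_j^*}$ escape this intersection. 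Since $B^{g_1}$ and $B^{g_2}$ are the quantile sets of $F_\mu^{-1}$ at the two midpoints $m_1=\frac{1}{2}(P_{j-1}^*+c)$ and $m_2=\frac{1}{2}(c+P_j^*)$, disjointness amounts to $F_\mu^{-1}$ being non-constant on $]m_1,m_2[$, a window of fixed length $\frac{1}{2}(P_j^*-P_{j-1}^*)$ that slides as $c$ varies; non-constancy of $f_j^*$ then does produce an admissible $c$. With these repairs the plan is sound and yields a self-contained proof of Proposition~\ref{th4-0} from the paper's own machinery.
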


By combining Proposition~\ref{th4-0} with Theorems~\ref{th2} and
~\ref{th1}, a description of all best $r$-approximations is easily
established.

 \begin{theorem}\label{th3}
Assume that $\mu\in\mathcal{P}_r$ for some $r\ge1$, and
$n\in\mathbb{N}$. Let $\delta_{{\bf x}}^{{\bf p}}$ with ${\bf
  x}\in\Xi_n,\ {\bf p} \in\Pi_n$ be a best $r$-approximation of $\mu$. Then, for every $i=1,\cdots,n$,
\begin{enumerate}
\item $x_{ i}<x_{ i+1}$ implies $P_{ i}\in Q^{F^{-1}_{\mu}}_{\frac{1}{2} (x_{ i}+x_{ i+1} )}$; and
\item $P_{ i-1}<P_{ i}$ implies $x_{ i}\in Q^{F_\mu}_{\frac{1}{2} (P_{ i-1}+P_{ i} )}$ if $r=1$, or $x_{ i}=\tau_r^{f_i}$ with $f_i=F_{\mu}^{-1}\left|_{ [P_{ i-1},P_{ i} ]}\right.$ if $r>1$.
\end{enumerate}
Moreover, if $\#\ {\rm supp}\ \mu\le n$ then $\delta_{{\bf x}}^{{\bf p}}=\mu$, whereas
if $\#\ {\rm supp}\ \mu>n$ then $x_{ i}<x_{ i+1}$ and
$P_{ i-1}<P_{ i}$ for all $i=1,\cdots,n$.
\end{theorem}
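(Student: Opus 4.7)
The plan is to assemble Theorem~\ref{th3} from three previously established results---Theorems~\ref{th2} and~\ref{th1} together with Proposition~\ref{th4-0}---with essentially no new argument required. For assertions~(i) and~(ii), the key observation is that if $\delta_{{\bf x}}^{{\bf p}}$ is a best $r$-approximation of $\mu$, then it is automatically a best $r$-approximation of $\mu$ \emph{given} ${\bf x}$ and, symmetrically, \emph{given} ${\bf p}$, since both of these are constrained sub-problems of the same ambient optimization. Applying Theorem~\ref{th2} then yields~(i), while Theorem~\ref{th1} yields~(ii); in particular, the dichotomy between $r=1$ and $r>1$ in~(ii) is inherited verbatim from Theorem~\ref{th1}.

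For the \emph{moreover} clause, I would split on $\#\,\mathrm{supp}\,\mu$. If $\#\,\mathrm{supp}\,\mu\le n$, then $\mu$ itself admits a representation $\delta_{{\bf y}}^{{\bf q}}$ with ${\bf y}\in\Xi_n$, ${\bf q}\in\Pi_n$ (padding with repeated locations or zero weights whenever $\#\,\mathrm{supp}\,\mu<n$), whence $d_r(\delta_{{\bf x}}^{{\bf p}},\mu)\le d_r(\mu,\mu)=0$ forces $\delta_{{\bf x}}^{{\bf p}}=\mu$. If instead $\#\,\mathrm{supp}\,\mu>n$, Proposition~\ref{th4-0} gives $\#\,\mathrm{supp}\,\delta_{{\bf x}}^{{\bf p}}=n$, meaning the $n$ atoms $x_{1},\dots,x_{n}$ are pairwise distinct \emph{and} each carries positive mass; this is precisely $x_{i}<x_{i+1}$ for $1\le i\le n-1$ (with $x_{n}<x_{n+1}=+\infty$ trivial from the endpoint conventions) together with $P_{i-1}<P_{i}$ for every $1\le i\le n$.

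No substantive obstacle arises, as each claim reduces to a direct corollary of a cited result. The only piece of bookkeeping is to verify that the boundary indices $i=1$ and $i=n$ in~(i) and~(ii) reduce, via the conventions $x_{0}=-\infty$, $x_{n+1}=+\infty$, $P_{0}=0$, $P_{n}=1$, to trivially satisfied statements, so that no hidden content is lost at the endpoints.
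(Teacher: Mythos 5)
Your proposal is correct and follows essentially the same route as the paper: observe that a best $r$-approximation is simultaneously a best $r$-approximation given ${\bf x}$ and given ${\bf p}$, invoke Theorems~\ref{th2} and~\ref{th1} for (i) and (ii), and use Proposition~\ref{th4-0} to get $\#\,{\rm supp}\ \delta_{{\bf x}}^{{\bf p}}=n$ (hence distinct locations with positive weights) when $\#\,{\rm supp}\ \mu>n$. Your explicit treatment of the trivial case $\#\,{\rm supp}\ \mu\le n$ and of the endpoint conventions is fine and merely spells out what the paper leaves implicit.
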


\begin{proof}
Note that $\delta_{{\bf x}}^{{\bf p}}$ is both a best $r$-approximation of $\mu$,
given ${\bf p}$, and a best $r$-approximation of $\mu$, given ${\bf x}$, and thus
conclusions (i) and (ii) follow directly from Theorems~\ref{th2}
and~\ref{th1}. For the non-trivial case where $\#\ {\rm supp}\ \mu>n$,
Proposition~\ref{th4-0} implies that $\#\ {\rm supp}\ \delta_{{\bf x}}^{{\bf p}}=n$,
or equivalently, $x_{ i}<x_{ i+1}$ and $P_{ i-1}<P_{ i}$ for all $i=1,\cdots,n$.
\end{proof}

As an important special case of Theorem~\ref{th3}, assume that
$\mu\in\mathcal{P}_r$ is continuous. Then $Q^{F_{\mu}^{-1}}_a$ is a
singleton for every $a\in\mathbb{R}$, and Theorem~\ref{th3} asserts
that every best $1$-approximation $\delta_{{\bf x}}^{{\bf p}}$ of $\mu$ satisfies 
$$
F_{\mu} \left( \frac{x_{ i}+x_{ i+1}}{2} \right) =P_{ i},\ \text{and}\
F_{\mu}(x_{ i})=\frac{P_{ i-1}+P_{ i}}{2},\quad \forall i=1,\cdots,n,
$$ 
and hence in particular
\begin{equation}\label{cla1}
2F_{\mu}(x_{ i})=F_{\mu} \left(\frac{x_{ i-1}+x_{ i}}{2}
 \right)+F_{\mu} \left( \frac{x_{ i}+x_{ i+1}}{2} \right) ,\quad \forall i=1,\cdots,n.
\end{equation} 
Similarly, every best $2$-approximation of $\mu$ satisfies 
$$
F_{\mu} \left( \frac{x_{ i}+x_{ i+1}}{2} \right) =P_{ i},\ \text{and}\
(P_{ i}-P_{ i-1} )x_{ i}=\int_{P_{ i-1}}^{P_{ i}}F_{\mu}^{-1}(t)\, {\rm
  d}t,\quad \forall  i=1,\cdots,n,
$$
and consequently
\begin{equation}\label{cla2}
x_{ i}F_{\mu} \left( \frac{x_{ i}+x_{ i+1}}{2} \right)-x_{ i}F_{\mu}
 \left( \frac{x_{ i-1}+x_{ i}}{2} \right) = \int_{\frac{1}{2} (x_{ i-1}+x_{ i} )}
^{\frac{1}{2} (x_{ i}+x_{ i+1} )}x \, {\rm d}F_{\mu}(x),\quad \forall
i=1,\cdots,n.
\end{equation}
Note that \eqref{cla1} and \eqref{cla2} each yield $n$ equations for
$x_{ 1},\cdots,x_{ n}$. These equations are exactly the classical
optimality conditions, derived, e.g., in \cite[Sec.~5.2]{GL} by means
of Voronoi partitions.

\begin{example}\label{r}
Let $\mu=\frac{1}{2}\lambda |_{[0,1]} +\frac{1}{2}\delta_{1}$. While
$\mu$ is not continuous, and hence not directly amenable to the
classical conditions \eqref{cla1} and \eqref{cla2}, Theorem~\ref{th3} applies and yields, for instance, $\delta_{\bullet,r}^{\bullet,2}=\xi(r)\delta_{\xi(r)}+ \bigl(1-\xi(r) \bigr)
  \delta_{3\xi(r)}$ for all $r\ge1$, where $r\mapsto\xi(r)$ is smooth,
  decreasing, with $\xi(1)=\frac{1}{3}$, $\xi(2)=\frac{3-\sqrt{3}}{4}$,
  and $\lim_{r\to+\infty}\xi(r)=\frac{1}{4}$.
\end{example}

If (i) and (ii) in Theorem~\ref{th3} identify only a single
probability measure $\delta_{{\bf x}}^{{\bf p}}$ then the latter clearly is a best
$r$-approximation. In general, however, and unlike in
Theorems~\ref{th2} and \ref{th1}, the conditions of Theorem~\ref{th3}
are not sufficient, as the following example shows. Moreover, best
$r$-approximations in general are not unique, not even when $r>1$.

\begin{example}\label{three}
Consider $\mu=\frac{1}{3}\lambda_{[-1,1]}+\frac{1}{3}\delta_0$ and let
$n=2$. For $r=1$, Theorem~\ref{th3} identifies exactly three potential
best $1$-approximations $\delta_{{\bf x}_j}^{{\bf p}_j},\ j=1,2,3$, namely 
$$
{\textstyle {\bf x}_1= (-\frac{2}{3},0 ),\ {\bf p}_1= (\frac{2}{9},\frac{7}{9} ),\quad 
{\bf x}_2= (-\frac{1}{4},\frac{1}{4} ),\ {\bf p}_2= (\frac{1}{2},\frac{1}{2} ),\quad
{\bf x}_3= (0,\frac{2}{3} ),\ {\bf p}_3= (\frac{7}{9},\frac{2}{9} ).}
$$
It is clear from $d_1 (\delta_{{\bf x}_1}^{{\bf p}_1},\mu )=d_1 (\delta_{{\bf x}_3}^{{\bf p}_3},\mu )=\frac{2}
{9}<\frac{7}{24}=d_1 (\delta_{{\bf x}_2}^{{\bf p}_2},\mu )$ that the two
(non-symmetric) probability measure $\delta_{{\bf x}_1}^{{\bf p}_1},\
\delta_{{\bf x}_3}^{{\bf p}_3}$ are best $1$-approximations of $\mu$, whereas the
(symmetric) $\delta_{{\bf x}_2}^{{\bf p}_2}$ is not. Similarly, for $r=2$,
Theorem~\ref{th3} yields three candidates of which again only the two
non-symmetric ones turn out to be best $2$-approximations of $\mu$.
\end{example}

Since $d_r (\delta_{\bullet}^{\bullet,n},\mu )\le d_r
(\delta_{\bullet}^{{\bf u}_n},\mu )$ for every $\mu\in\mathcal{P}_r$ and
$n\in\mathbb{N}$, clearly $\lim_{n\to\infty}d_r
(\delta_{\bullet}^{\bullet,n},\mu )=0$. The rate of convergence of $
\bigl( d_r (\delta_{\bullet}^{\bullet,n},\mu ) \bigr)$ has been, and
continues to be, studied extensively; see, e.g.,
\cite{GL,GLP,KZ,K,Kr,PS} and the references therein. Arguably the
simplest situation occurs if $\mu\in\mathcal{P}_r$ has a non-trivial
absolutely continuous part and satisfies a mild moment condition. In
this case, $ \bigl( d_r (\delta_{\bullet}^{\bullet,n},\mu ) \bigr)$ decays like $ (n^{-1} )$ for every $r$.

\begin{prop}\label{Zador}{\rm\cite[Thm.~6.2]{GL}.}
Assume that $\mu\in\mathcal{P}_r$ for some $r\ge1$. If $\mu\in
\mathcal{P}_s$ with $s>r$ then
$$
   \lim\nolimits_{n\to\infty}nd_r (\delta_{\bullet}^{\bullet,n},\mu )=\frac{1}{2(r
   +1)^{1/r}} \left( \int_{\mathbb{R}}\left(\frac{{\rm d}\mu_a}{{\rm d}\lambda}\right)^{\frac{1}{r+1}} \right)^{\frac{r+1}{r}},
$$
where $\mu_a$ is the absolutely continuous part (w.r.t. $\lambda$)
  of $\mu$.
\end{prop}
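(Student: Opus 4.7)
The plan is to reduce the unconstrained minimisation to a one-dimensional variational problem via the optimality conditions already at hand, and then read off the extremum by Hölder's inequality.

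By Theorem~\ref{th3}, every best $r$-approximation $\delta_{{\bf x}}^{{\bf p}}$ satisfies $x_{ i} = \tau_r^{f_i}$ with $f_i = F_\mu^{-1}|_{[P_{ i-1}, P_{ i}]}$ whenever $r > 1$ (for $r = 1$, $x_{ i}$ is the appropriate median quantile, and the argument proceeds analogously). Consequently
$$
d_r(\delta_\bullet^{\bullet,n}, \mu)^r = \min_{{\bf p} \in \Pi_n} \sum_{i=1}^n \|f_i - \tau_r^{f_i}\|_r^r,
$$
so it suffices to analyse this minimum asymptotically. First I would treat the ``nice'' case where $\mu$ is absolutely continuous with density $\rho$ strictly positive and continuous on a compact support $[a,b]$. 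Then $f := F_\mu^{-1}$ is $C^1$ on $]0,1[$ with $f'(t) = 1/\rho(f(t))$, and combining a first-order Taylor expansion of $f$ on each $[P_{i-1}, P_{i}]$ with Proposition~\ref{quan} yields the uniform asymptotics
$$
\|f_i - \tau_r^{f_i}\|_r^r = \frac{f'(P_i^\ast)^r}{2^r(r+1)}\, (P_{ i} - P_{ i-1})^{r+1} \bigl(1 + o(1)\bigr),
$$
for suitable $P_i^\ast \in [P_{ i-1}, P_{ i}]$, as $\max_i (P_{ i} - P_{ i-1}) \to 0$.

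For partitions generated by a smooth positive density $\rho_0$ on $\mathbb{I}$ via $P_{i} = G_0^{-1}(i/n)$, with $G_0$ the CDF of $\rho_0$, a Riemann-sum computation turns the sum into
$$
n^r \sum_{i=1}^n \|f_i - \tau_r^{f_i}\|_r^r \longrightarrow \frac{1}{2^r(r+1)} \int_0^1 \frac{f'(t)^r}{\rho_0(t)^r}\, dt.
$$
Hölder's inequality with exponents $r+1$ and $(r+1)/r$ gives
$$
\left(\int_0^1 f'(t)^{r/(r+1)}\, dt\right)^{\!\!r+1} \le \int_0^1 \left(\frac{f'(t)}{\rho_0(t)}\right)^{\!\!r} \! dt,
$$
with equality precisely when $\rho_0^\ast \propto (f')^{r/(r+1)}$. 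Choosing the partition according to $\rho_0^\ast$ supplies the upper bound, while applying the same inequality to the Riemann limit of any sequence of partitions yields the matching lower bound. The substitution $x = f(t)$, $dt = \rho(x)\, dx$, $f'(t) = 1/\rho(x)$ converts $\int_0^1 f'(t)^{r/(r+1)}\, dt$ into $\int_\mathbb{R} \rho(x)^{1/(r+1)}\, dx$, and raising to the $1/r$ power produces the claimed formula.

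Two reductions constitute the main technical obstacles. First, a general $\mu \in \mathcal{P}_s$ decomposes as $\mu_a + \mu_s$; one must argue that the singular part contributes $o(n^{-1})$ to $d_r(\delta_\bullet^{\bullet,n}, \mu)$ so that only $\mu_a$ survives in the limit. This can be done by allocating $\lfloor \varepsilon n \rfloor$ atoms to a truncation of $\mathrm{supp}\,\mu_s$ (whose contribution is $o(n^{-1})$ via Theorem~\ref{th5-1}(i), which crucially uses $s > r$), while the remaining $(1-\varepsilon)n$ atoms approximate $\mu_a$; letting $\varepsilon \downarrow 0$ together with a truncation of the tails of $\mu_a$ via the $s$-th moment, and mollification of $\rho$ to make it strictly positive and continuous, closes the reduction by continuity of the functional $\rho \mapsto \int \rho^{1/(r+1)}$ on positive $L^1$-densities. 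Second, and most delicate, is justifying the Riemann-sum passage uniformly over all weight sequences: one must a priori rule out partitions having a ``coarse'' block of size bounded away from $0$, which by a local version of Theorem~\ref{low} would contribute an $\Omega(n^{-1})$ error and hence cannot belong to an asymptotically optimal sequence.
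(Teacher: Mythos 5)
The paper does not prove this proposition at all: it is imported verbatim as \cite[Thm.~6.2]{GL} (Zador's theorem), so there is no internal argument to compare yours against, and any proof you give must stand on its own. Your sketch correctly reproduces the classical companding heuristic: the reduction $d_r(\delta_\bullet^{\bullet,n},\mu)^r=\min_{{\bf p}}\sum_i\|f_i-\tau_r^{f_i}\|_r^r$ is legitimate by Theorem~\ref{th1}, the local asymptotic $\|f_i-\tau_r^{f_i}\|_r^r\sim f'(P_i^\ast)^r(P_{i}-P_{i-1})^{r+1}/(2^r(r+1))$ is right, and the H\"older step with exponents $r+1$ and $(r+1)/r$ correctly identifies the optimal point density $\propto\bigl(\tfrac{{\rm d}\mu_a}{{\rm d}\lambda}\bigr)^{1/(r+1)}$ and the stated constant. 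But as a proof it has genuine gaps, and they sit exactly where the theorem is hard.

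First, the disposal of the singular part is wrong as written. Theorem~\ref{th5-1}(i) only gives $n^{1/r-1/s}d_r(\delta_\bullet^{{\bf u}_n},\mu)\to0$, i.e.\ a decay rate $o\bigl(n^{-(1/r-1/s)}\bigr)$ with $1/r-1/s<1$; this is far weaker than the $o(n^{-1})$ you need, so it cannot show that the singular part contributes negligibly. The statement $nd_r(\delta_\bullet^{\bullet,n},\mu_s)\to0$ for singular $\mu_s$ is precisely the $\mu_a=0$ case of the proposition itself and requires its own argument (in \cite{GL} this is a separate, nontrivial chapter of the proof); nothing in the present paper supplies it. Second, the lower bound is not established: optimal quantizers need not arise from any companding density, the cell widths need not shrink uniformly, and one must rule out that atoms allocated to the singular part or to the tails reduce the constant for $\mu_a$ --- this needs a superadditivity or ``firewall''-type argument, not merely ``applying the same inequality to the Riemann limit of any sequence of partitions.'' You flag both points as technical obstacles, but they constitute the actual mathematical content of Zador's theorem; what you carry out in detail is only the comparatively easy upper bound for smooth, compactly supported, strictly positive densities.
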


It is instructive to compare Proposition~\ref{Zador} to
Theorem~\ref{th6}. To do so, assume that $\mu\in\mathcal{P}_s$ for
some $s>r$ and that $\mu^{-1}$ is absolutely continuous. Then
$\lim_{n\to\infty}nd_r (\delta_{\bullet}^{\bullet,n},\mu )$ and
$\lim_{n\to\infty}nd_r (\delta_{\bullet}^{{\bf u}_n},\mu )$ both are finite
and positive, provided that $\mu$ is non-singular and $\frac{{\rm
    d}\mu^{-1}}{{\rm d}\lambda}\in L^r(\mathbb{I})$. Thus $ \bigl( d_r
(\delta_{\bullet}^{\bullet,n},\mu ) \bigr)$ and $ \bigl( d_r
(\delta_{\bullet}^{{\bf u}_n},\mu ) \bigr)$ exhibit the same rate of decay,
namely $ (n^{-1} )$. Note that while the latter rate is a universal
{\em upper\/} bound on $ \bigl( d_r (\delta_{\bullet}^{\bullet,n},\mu
) \bigr)$, at least under the mild assumption that
$\mu\in\mathcal{P}_s$ for some $s>r$, it is a universal {\em lower\/}
bound on $ \bigl( d_r (\delta_{\bullet}^{{\bf u}_n},\mu ) \bigr)$, by
Theorem~\ref{low}. Even if both sequences decay at the same rate, however, $\lim_{n\to\infty}nd_r (\delta_{\bullet}^{\bullet,n},\mu )\le
\lim_{n\to\infty}nd_r (\delta_{\bullet}^{{\bf u}_n},\mu )$, and equality
holds only if either $\mu=\frac{1}{\lambda(I)}\lambda |_I $ for some
bounded, non-degenerate interval $I\subset\mathbb{R}$ or else
$\mu=\delta_a$ for some $a\in\mathbb{R}$. Thus only in the trivial
case of a (possibly degenerate) uniform distribution $\mu$ does $
(\delta_{\bullet}^{{\bf u}_n} )$ provide a sequence of asymptotically best
$r$-approximations of $\mu$ (as defined below).

\begin{example}\label{eg2}
Let $\mu$ be the exponential distribution of Example~\ref{expe}. For
$r=1$ and every $n\in\mathbb{N}$, \eqref{cla1} identifies a unique
best $1$-approximation $\delta_{{\bf x}_n}^{{\bf p}_n}$,
with 
$$
x_{n,i}=-2\log\frac{n+1-i}{\sqrt{n(n+1)}},\quad
P_{n,i}=\frac{i(2n+1-i)}{n(n+1)},\quad \forall i=1,\cdots,n \, .
$$ 
Here $\delta_{\bullet}^{\bullet,n}$ is unique, and $
nd_1(\delta_{\bullet}^{\bullet,n},\mu)=n\log (1+\frac{1}{n} )=1+\cO (
n^{-1}) $ as $ n\to\infty$,
in agreement with Proposition~\ref{Zador}. For comparison, recall from
Example~\ref{ex2} that $\lim_{n\to\infty}\frac{n}{\log n}d_1
(\delta_{\bullet}^{{\bf u}_n},\mu )=\frac{1}{4}$. For $r>1$, no explicit
expression seems to be known for $\delta_{\bullet}^{\bullet,n}$, not
even when $r=2$. However, in a sense made precise below, $
(\delta_{{\bf y}_n}^{\widetilde{{\bf p}}_n} )$ with
$$
y_{n,i}=(r+1)\log\frac{n+1}{n-i+1},\quad \widetilde{P}_{n,i}=1- \left(
  \frac{(n+1-i)(n-i)}{(n+1)^2}\right)^{\frac{r+1}{2}},\quad \forall
i=1,\cdots,n
$$ yields a sequence of asymptotically best $r$-approximations of $\mu$ for any $r>1$. 
\end{example}

Example~\ref{eg2} illustrates that even in very simple situations it
may be difficult to compute $\delta_{\bullet}^{\bullet,n}$
explicitly. Not least from a computational point of view, therefore,
it is natural to seek $r$-approximations that at least are optimal
{\em asymptotically}. Specifically, call $ (\delta_{{\bf x}_n}^{{\bf p}_n} )$ with
${\bf x}_n\in\Xi_n$, ${\bf p}_n\in\Pi_n$ for all $n\in\mathbb{N}$ a sequence of
{\em asymptotically best $r$-approximations} of $\mu\in\mathcal{P}_r$
with $\#\ {\rm supp}\ \mu=\infty$, if
$$
\lim\nolimits_{n\to\infty}\frac{d_r (\delta_{{\bf x}_n}^{{\bf p}_n},\mu )}{d_r (\delta
_{\bullet}^{\bullet,n},\mu )}=1.
$$
There exists a large literature on asymptotically best
approximations. Specifically, mild conditions (such as
$\mu\in\mathcal{P}_r$ being absolutely continuous with $\frac{{\rm
    d}\mu}{{\rm d}\lambda}$ H\"{o}lder continuous and positive on
$\overset{\circ}{{\rm supp}\ \mu}$, among others) have been
established which guarantee that $ (\delta_{{\bf x}_n}^{\bullet} )$ is a
sequence of asymptotically best approximations of $\mu$, where 
\begin{equation}\label{asym}
x_{n,i}=F_{\mu_r}^{-1}\left(\frac{i}{n+1} \right),\quad \forall
i=1,\cdots,n\,  , 
\end{equation} 
with $\frac{{\rm d}\mu_r}{{\rm d}\lambda}=\frac{\frac{{\rm d}\mu}{{\rm
d}\lambda}^{\frac{1}{r+1}}}{\int_{\mathbb{R}}\frac{{\rm
d}\mu}{{\rm d}\lambda}^{\frac{1}{r+1}}};$ see, e.g., \cite{L,PF}
and the references therein.

\begin{example}\label{Beta}
  Let $\mu={\rm Beta}(2,1)$ as in Examples~\ref{sq}
  and~\ref{eg5}. While for arbitrary $n\in\mathbb{N}$, the authors do
  not know of an explicit expression for
  $\delta_{\bullet}^{\bullet,n}$ for any $r\ge1$, \eqref{asym} yields
  a sequence $ (\delta_{{\bf x}_n}^{{\bf p}_n} )$ of asymptotically best
  $r$-approximations of $\mu$, with 
$$
x_{n,i}= \left( \frac{i}{n+1}\right)^{\frac{r+1}{r+2}},\quad
P_{n,i}=\frac{1}{4(n+1)^{\frac{2(r+1)}{r+2}}} \left( i^{\frac{r+1}{r+2}}+(i+1)^{
  \frac{r+1}{r+2}} \right)^2,\quad \forall  i=1,\cdots,n-1\,  ,
$$ 
and $x_{n,n}= (\frac{n}{n+1} )^{\frac{r+1}{r+2}}$. From this, a short
calculation leads to, for instance,
$$
nd_2 (\delta_{{\bf x}_n}^{{\bf p}_n},\mu
)=\frac{3}{8\sqrt{2}}+\cO (n^{-1})\quad \text{as}\ n\to\infty\, ,
$$ 
which is consistent with Proposition~\ref{Zador}.
\end{example}

If $\mu\in\mathcal{P}_s$ is singular then
Proposition~\ref{Zador} only yields
$\lim_{n\to\infty}nd_r (\delta_{\bullet}^{\bullet,n}, \mu )=0$. The
detailed analysis of $ \bigl( d_r (\delta_{\bullet}^{\bullet,n},\mu )\bigr)$ in
this case is an active research area, for which already a substantial
literature exists, notably for important classes of singular
probabilities such as self-similar and -conformal measures; see, e.g.,
\cite{GL,GL1,KZ,PB,PB1,RR}. A key notion in this context is the
so-called {\em quantization dimension} of $\mu\in\mathcal{P}_r$ of
order $r$, defined as
$$
D_r(\mu)=\lim\nolimits_{n\to\infty}\frac{\log n}{-\log d_r
  (\delta_{\bullet}^{\bullet,n},\mu )},
$$
provided that this limit exists. For instance, Proposition~\ref{Zador}
implies that $D_r(\mu)=1$ whenever $\mu_a\neq0$. The relations of
$D_r(\mu)$ to various other concepts of dimension have attracted
considerable attention \cite{GL,KZ,PB,RR}.

\begin{example}\label{Canb}
For the Cantor measure $\mu$ of Example~\ref{Can}, \cite[Cor.~4.7 and
Rem.~6.1]{Kr0} show that, for every $r>1$,
$$
0<\liminf\nolimits_{n\to\infty}n^{\log3/\log2}d_r(\delta_{\bullet}^
{\bullet,n},\mu)
<\limsup\nolimits_{n\to\infty}n^{\log3/\log2}d_r (\delta_{\bullet}^{\bullet,n},\mu
 )<+\infty.
$$
From this, it is clear that $D_r(\mu)=\frac{\log2}{\log3}$, which is
independent of $r$ and coincides with the Hausdorff dimension of ${\rm
  supp}\ \mu$.
\end{example}

\begin{example}\label{Cani}
Let $\mu$ be the inverse Cantor measure of Example~\ref{eg3}. Note
that $\mu$ is not a self-similar, and hence the classical results for
self-similar probabilities do not apply. Still, $\mu$ is the unique
fixed point of a contraction on $\mathcal{P}_1$, namely
$\nu\mapsto\frac{1}{3} (\nu\circ T_1^{-1}+\delta_{1/2}+\nu\circ
T_2^{-1} )$, with the similarities $T_1(x)=\frac{1}{2}x$ and
$T_2(x)=\frac{1}{2}(1+x)$. This property enables a fairly complete
analysis of $ \bigl( d_r (\delta_{\bullet}^{\bullet,n},\mu ) \bigr)$
which the authors intend to present elsewhere. Specifically, with
$\beta_r= (1-\frac{1}{r} )+\frac{1}{r}\frac{\log3}{\log2}$, it can be
shown that, for every $r\ge1$, the numbers
$\liminf_{n\to\infty}n^{\beta_r}d_r (\delta_{\bullet}^{\bullet,n},\mu
)$ and $\limsup_{n\to\infty}n^{\beta_r}d_r
(\delta_{\bullet}^{\bullet,n},\mu )$ both are finite and positive. In
particular, $D_r(\mu)=\beta_r^{-1}$. Note that, unlike in the previous
example, $D_r(\mu)$ depends on $r$, and $\frac{\log2}{\log3}\le
D_r(\mu)<1$. Thus $D_r(\mu)$ is larger than $0$, the Hausdorff
dimension of $\mu$, but smaller than $1$, the Hausdorff dimension of 
${\rm supp}\ \mu=\mathbb{I}$.
\end{example}

Proposition~\ref{Zador} guarantees that under a mild moment condition,
$\bigl(d_r (\delta_{\bullet}^{\bullet,n},\mu )\bigr)$ decays at least
like $\bigl(n^{-1}\bigr)$, and in fact may decay faster, as
Examples~\ref{Canb} and~\ref{Cani} illustrate. Even for purely atomic
$\mu$, however, the decay of $ \bigl( d_r (\delta_{\bullet}^{\bullet,n},\mu )\bigr)$ can be
arbitrarily slow. This final observation, a refinement of
\cite[Ex.6.4]{GL}, uses the following simple calculus fact; cf.\ also
\cite[Thm.3.3]{BL}.

\begin{prop}\label{pro3}
Given any sequence $ (a_n )$ of non-negative real numbers with
$\lim_{n\to\infty}a_n=0$, there exists a decreasing sequence $ (b_n )$
with $\lim_{n\to\infty}b_n=0$ such that $ (b_{n}-b_{n+1} )$ is
decreasing also, and $b_n\ge a_n$ for all $n$.
\end{prop}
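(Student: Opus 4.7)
The plan is to produce $(b_n)$ in the form $b_n = \sum_{k \ge n} c_k$, where $(c_k)$ is a non-negative, non-increasing, summable sequence to be constructed. With this ansatz the first differences satisfy $b_n - b_{n+1} = c_n$, which is then automatically non-increasing, and $(b_n)$ itself is automatically non-increasing with $b_n \to 0$. The problem thus reduces to constructing such $(c_k)$ with $\sum_{k \ge n} c_k \ge a_n$ for every $n \in \mathbb{N}$.

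First, I would replace $(a_n)$ by its non-increasing envelope $\widetilde{a}_n := \sup_{k \ge n} a_k$; clearly $\widetilde{a}_n \ge a_n$ and $\widetilde{a}_n \to 0$, so it is enough to dominate $(\widetilde{a}_n)$. The sequence $(c_n)$ will be a staircase, constant on blocks. Recursively select integers $1 = n_0 < n_1 < n_2 < \cdots$ satisfying
$$
\widetilde{a}_{n_j} \le 4^{-j}, \qquad e_j := n_{j+1} - n_j \ge e_{j-1} \ (j \ge 1).
$$
This is easy: having chosen $n_0, \dots, n_j$, pick any $n_{j+1} \ge n_j + e_{j-1}$ with $\widetilde{a}_{n_{j+1}} \le 4^{-(j+1)}$, which exists because $\widetilde{a}_n \to 0$. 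Then set
$$
c_n := \frac{2^{-j}}{e_j}, \qquad n_j \le n < n_{j+1}.
$$

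Verification is then immediate. The sequence $(c_n)$ is constant on each block, and across block boundaries $c_{n_{j+1}}/c_{n_{j+1}-1} = e_j/(2 e_{j+1}) \le 1/2$ since $e_{j+1} \ge e_j$, so $(c_n)$ is non-increasing. Each block contributes exactly $2^{-j}$ to the tail sum, giving $\sum_{k \ge n_j} c_k = 2^{1-j}$, so $b_n := \sum_{k \ge n} c_k$ is well-defined and tends to $0$. Finally, for $n \in [n_j, n_{j+1})$,
$$
b_n \ge b_{n_{j+1}} = 2^{-j} \ge 4^{-j} \ge \widetilde{a}_{n_j} \ge \widetilde{a}_n \ge a_n,
$$
as required.

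No step presents a serious obstacle. The only subtlety is the simultaneous control of the block lengths (needed for the monotonicity of $(c_n)$ across blocks) and of the values $\widetilde{a}_{n_j}$ (needed for the final lower bound on $b_n$), but both conditions are easily satisfied by the recursive choice above, thanks to $\widetilde{a}_n \to 0$.
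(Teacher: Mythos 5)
The paper states Proposition \ref{pro3} as a ``simple calculus fact'' and gives no proof of it, so there is no argument of the authors' to compare yours against; on its own terms, your tail-sum construction ($b_n=\sum_{k\ge n}c_k$ with $(c_k)$ non-negative, non-increasing and summable, so that $b_n-b_{n+1}=c_n$ is automatically monotone) is the natural way to prove the statement, and the block construction is essentially sound. Two small points need attention. First, the base case fails without a normalization: with $n_0=1$ forced, the requirement $\widetilde a_{n_0}\le 4^{-0}=1$ is simply the assumption $\sup_k a_k\le 1$, which need not hold (take $a_1=100$; then $b_1=\sum_{k\ge 1}c_k=2<a_1$). This is fixed by rescaling $(a_n)$ by $\max\{1,\sup_k a_k\}$ at the outset, or equivalently by multiplying every $c_k$ by that constant. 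Second, your $(b_n-b_{n+1})=(c_n)$ is constant on each block, hence only non-increasing; if ``decreasing'' in the statement is read strictly (the paper is careful to distinguish ``non-decreasing'' from ``increasing'' elsewhere), replace $c_n$ by $c_n+2^{-n}$, which is strictly decreasing, still summable, and only enlarges the tail sums, so the domination $b_n\ge a_n$ survives. With these two trivial repairs the proof is complete, and it delivers more than is needed at the one place the proposition is used, namely the proof of Theorem \ref{slow}, where non-increasing differences suffice.
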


\begin{theorem}\label{slow}
Given $r\ge1$ and any sequence $ (a_n )$ of non-negative real
numbers with $\lim_{n\to\infty}a_n=0$, there exists 
$\mu\in\mathcal{P}_r$ such that $d_r (\delta_{\bullet}^{\bullet,n},\mu )\ge a_n$ for every $n\in\mathbb{N}$.
\end{theorem}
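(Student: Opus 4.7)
The strategy is to construct $\mu$ explicitly as a purely atomic probability measure tailored to the sequence $(a_n)$, and to establish the desired lower bound via the quantile-function representation \eqref{0}.

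To begin, apply Proposition~\ref{pro3} to pass from $(a_n)$ to a decreasing null sequence $(b_n)$ with $b_n\ge a_n$ and $(b_n-b_{n+1})$ itself decreasing; it is then enough to prove the statement for $(b_n)$. After multiplying $(b_n)$ by a positive constant (corresponding to a spatial rescaling of $\mu$), assume additionally $b_1\le 1$. With an increasing sequence $0<x_1<x_2<\cdots$ still to be specified, I define
$$
\mu = (1-b_1)\delta_{0} + \sum_{k=1}^\infty (b_k-b_{k+1})\delta_{x_k}\, ,
$$
which is a probability measure whose quantile function equals $0$ on $[0,1-b_1)$ and equals $x_k$ on $[1-b_k, 1-b_{k+1})$ for every $k\ge 1$. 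The positions $x_k$ are chosen inductively so as to enforce two competing conditions: (a) $\sum_{k\ge 1}(b_k-b_{k+1})x_k^r<+\infty$, guaranteeing $\mu\in\mathcal{P}_r$; and (b) the gap $x_k-x_{k-1}$ is so large that $(b_k-b_{k+1})(x_k-x_{k-1})^r$ dominates $b_{k-1}^r$ by a fixed factor.

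For the lower bound, fix any $\nu\in\mathcal{P}$ with $\#\operatorname{supp}\nu\le n$. By \eqref{0}, $d_r(\nu,\mu)^r=\|F_\nu^{-1}-F_\mu^{-1}\|_r^r$; since $F_\nu^{-1}$ is a non-decreasing step function taking at most $n+1$ distinct values whereas $F_\mu^{-1}$ takes infinitely many, a pigeonhole argument combined with the joint monotonicity of the two functions yields some $k>n$ such that $F_\nu^{-1}$ is constant throughout $[1-b_k,1-b_{k+1})$ at a value differing from $x_k$ by at least $\tfrac{1}{2}\min\{x_k-x_{k-1},\,x_{k+1}-x_k\}$. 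Restricting the integral defining $\|F_\nu^{-1}-F_\mu^{-1}\|_r^r$ to this sub-interval and invoking the calibration (b) produces $d_r(\nu,\mu)^r\ge b_n^r$, as required.

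The main obstacle is the calibration step, specifically the simultaneous satisfaction of (a) and (b): condition (b) forces the gaps $x_k-x_{k-1}$ to grow quickly, whereas (a) limits how large the $x_k$ can be. The decreasing-differences hypothesis on $(b_n)$ afforded by Proposition~\ref{pro3} is exactly what allows these competing constraints to be reconciled, by controlling the series $\sum_k (b_k - b_{k+1}) x_k^r$ through a telescoping-type argument. The subsidiary pigeonhole step is also non-trivial, since the atoms of $\nu$ can be placed arbitrarily; dealing with it requires a careful case analysis of how the $n$ levels of $F_\nu^{-1}$ distribute against the countably many constant pieces of $F_\mu^{-1}$.
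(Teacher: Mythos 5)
Your overall architecture (reduce to a nicer sequence via Proposition~\ref{pro3}, build a purely atomic $\mu$ with atoms drifting to $+\infty$, bound $d_r$ below by the contribution of atoms that the $n$-point approximation must miss) matches the paper's, but the quantitative core of your argument has a genuine gap, and it is precisely at the step you flag as "the main obstacle." Your calibration (b), namely $(b_k-b_{k+1})(x_k-x_{k-1})^r\ge C\, b_{k-1}^r$ for a fixed $C>0$, is incompatible with the integrability requirement (a): since $x_{k-1}\ge 0$ one has
$$
\sum\nolimits_k (b_k-b_{k+1})\,x_k^r\ \ge\ \sum\nolimits_k (b_k-b_{k+1})(x_k-x_{k-1})^r\ \ge\ C\sum\nolimits_k b_{k-1}^r\,,
$$
and the right-hand side diverges whenever $\sum_k b_k^r=+\infty$ (e.g.\ $b_k=(\log(k+1))^{-1/2}$, which is a legitimate case since $(a_n)$ may tend to $0$ arbitrarily slowly). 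No choice of $(x_k)$ reconciles (a) and (b), and the decreasing-differences property from Proposition~\ref{pro3} does not help here. Moreover, even granting (b), your lower bound uses only a \emph{single} interval $[1-b_k,1-b_{k+1}[$ with some $k>n$ produced by pigeonhole; that $k$ need not equal $n+1$ (it can be as large as roughly $2n$, or worse if you only know $k>n$), so the resulting bound is of order $b_{k-1}^r\le b_n^r$ with the inequality going the wrong way — a single missed block cannot carry the whole of $b_n^r$.

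The paper's proof resolves exactly this tension by making each missed atom contribute only its own telescoping increment and then \emph{summing over all missed atoms}. Concretely (after arranging $(a_n)$ and $(a_{n-1}^r-a_n^r)$ to be decreasing), it takes $x_k=3\cdot 2^{k-1}c_r^{1/r}$ and $p_k=c_r^{-1}2^{-(k-1)r}(a_{k-1}^r-a_k^r)$, so that $p_k x_k^r=3^r(a_{k-1}^r-a_k^r)$ is summable while $p_k\cdot(\mathrm{dist}(x_k,\mathrm{supp}\,\delta_\bullet^{\bullet,n}))^r\ge a_{k-1}^r-a_k^r$ for every missed index $k$. Since at most $n$ of the disjoint dyadic blocks can meet the $n$-point support and the increments are decreasing, the total missed contribution is at least $\sum_{k>n}(a_{k-1}^r-a_k^r)=a_n^r$. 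If you want to salvage your quantile-function formulation, you should replace your single-interval pigeonhole by this summation over all intervals $[1-b_k,1-b_{k+1}[$ on which $F_\nu^{-1}$ is far from $x_k$, and weaken (b) to the per-block requirement that weight times gap$^r$ exceed the $k$-th telescoping increment of $(b_n^r)$ rather than $b_{k-1}^r$ itself.
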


\begin{proof}
In view of Proposition~\ref{pro3}, assume w.o.l.g.\ that $ (a_n )$ and
$ (a_n^r-a_{n+1}^r )$ both are decreasing. Pick $a_0>a_1$ such that
$a_0^r-a_1^r>a_1^r-a_2^r$, and let
$c_r=\sum_{k=1}^{\infty}2^{-(k-1)r}$ $ (a_{k-1}^r-a_k^r )$. Note that
$c_r$ is finite and positive. Consider
$\mu=\sum_{k=1}^{\infty}p_k\delta_{x_k}$, where $p_k=c_r^{-1}
2^{-(k-1)r} (a_{k-1}^r-a_k^r )$ and $x_k=3\cdot2^{k-1}c_r^{1/r}$ for
all $k\in\mathbb{N}$. Since
$\sum_{k=1}^{\infty}p_kx_k^r=3^ra_0^r<+\infty$, clearly
$\mu\in\mathcal{P}_r$. For every $n\in\mathbb{N}$, 
define $K_n\subset\mathbb{N}$ as
$$
K_n= \{k\in\mathbb{N}:\ {\rm supp}\ \delta_{\bullet}^{\bullet,n}\
\cap\  [2^kc_r^{1/r},2^{k+1}c_r^{1/r} [\: =\varnothing \}\, .
$$ 
Since $\#\ {\rm supp}\  \delta_{\bullet}^{\bullet,n}\le n$ and the
intervals $  [2^kc_r^{1/r},2^{k+1}c_r^{1/r} [$, $k\in\mathbb{N}$,
are disjoint, $\#\ (\mathbb{N}\setminus K_n)\le n$. Moreover, 
$$
\min\nolimits_{y\in{\rm supp} \, \delta_{\bullet}^{\bullet,n}} |x_k-y|^r\ge2^{(k-1)r}c_r\
\text{for every}\ k\in K_n\, .
$$ 
Recall from \cite[(ii), p.1847]{CGI} that $d_r
(\delta_{\bullet}^{\bullet,n},\mu )^r=\int_{\mathbb{R}}\min_{y\in{\rm
    supp}\ \delta_{\bullet}^{\bullet,n}} |x-y|^r{\rm d}\mu(x)$; see
also \cite[Lemma~3.1]{GL}. It follows that, for every
$n\in\mathbb{N}$, 
$$
d_r (\delta_{\bullet}^{\bullet,n},\mu
)^r=\sum\nolimits_{k=1}^{\infty}p_k\min
\nolimits_{y\in{\rm supp}\ \delta_{\bullet}^{\bullet,n}} |x_k-y|^r\ge\sum\nolimits_{k\in
  K_n}p_k2^{(k-1)r}c_r=\sum\nolimits_{k\in K_n} (a_{k-1}^r-a_k^r ).
$$ 
Moreover, recall that $ (a_{n-1}^r-a_n^r )$ is decreasing, and $\#\
(\mathbb{N}\setminus K_n)\le n$. Thus 
$$
d_r (\delta_{\bullet}^{\bullet,n},\mu )^r\ge\sum\nolimits_{k=n+1}^{\infty} (a_{k-
1}^r-a_k^r )=a_n^r,
$$
and hence $d_r (\delta_{\bullet}^{\bullet,n},\mu )\ge a_n$ for every $n\in\mathbb{N}$.
\end{proof}

\subsection*{Acknowledgements}

The first author was supported in part by a Pacific Institute for the Mathematical
Sciences (PIMS) Graduate Scholarship and a Josephine Mitchell Graduate
Scholarship, both at the University of Alberta. The second author was supported by an NSERC Discovery
Grant. Both authors are grateful to F.\ Dai for helpful
conversations. They also like to thank an anonymous referee for
bringing to their attention the important reference \cite{MJ}, and for
providing several excellent comments that helped improve the exposition.


\begin{thebibliography}{10}
\bibitem{Ba} {\sc D.M. Baker}, {\em Quantizations of probability measures and preservation of the convex order}, Stat. Probab. Lett., 107 (2015), 280--285.
\bibitem{BHM} {\sc A. Berger, T. Hill, and K.E. Morrison}, {\em Scale-distortion inequalities for mantissas of finite data sets}, J. Theor. Probab.,  21 (2008), 97--117.
\bibitem{BL} {\sc S.G. Bobkov and M. Ledoux}, {\em One-dimensional empirical measures, order statistics and Kantorovich transport distances}, Preprint (2016). \url{http://www-users.math.umn.edu/~bobko001/preprints/2016_BL_Order.statistics_Revised.version.pdf}. To appear in: Memoirs of the AMS.
\bibitem{BG} {\sc E. Boissard and T.L. Gouic}, {\em On the mean speed of convergence of empirical and occupation
measures in Wasserstein distance},  Ann. Inst. H. Poincar\'{e} Probab. Statist., 50 (2014), 539--563.
\bibitem{BJM} {\sc G. Bouchitt\'{e}, C. Jimenez, and R. Mahadevan}, {\em Asymptotic analysis of a class of optimal location problems},
J. Math. Pures Appl., 95 (2011), 382--419.
\bibitem{CM} {\sc C.A. Cabrelli and U.M. Molter}, {\em The Kantorovich metric for probability measures on the circle}, J. Comput.
Appl. Math., 57 (1995), 345--361.
\bibitem{CGI} {\sc E. Caglioti, F. Golse, and M. Iacobelli}, {\em A gradient flow approach to quantization of measures},
Math. Models Methods Appl. Sci., 25 (2015), 1845--1885.
\bibitem{C} {\sc J. Chevallier}, {\em Uniform decomposition of
    probability measures: quantization, classification, clustering and
    rate of convergence}, J.\ Appl.\ Prob., 55 (2018), 1037--1045.
\bibitem{DSS} {\sc S. Dereich, M. Scheutzow, and R. Schottstedt}, {\em Constructive quantization: Approximation by empirical measures}, Ann. Inst. H. Poincar\'{e} Probab. Statist., 49  (2013), 1183--1203.
\bibitem{DV} {\sc S. Dereich and C. Vormoor}, {\em The high resolution vector quantization problem
with Orlicz norm distortion}, J. Theor. Probab., 24 (2011), 517--544.
\bibitem{DP} {\sc J. Dick and F. Pillichshammer}, {\em Digital nets
    and sequences}, Discrepancy theory and quasi-Monte Carlo
  integration, Cambridge University Press, 2010.
\bibitem{D} {\sc R. Dudley}, {\em Real Analysis and Probability}, Wadsworth $\&$ Brooks/Cole Advanced Books $\&$ Software,
Pacific Grove, 2004.
\bibitem{FG} {\sc N. Fournier and A. Guillin}, {\em On the rate of convergence in Wasserstein distance of the empirical measure}, Probab. Theory Relat. Fields, 162 (2015), 707--738.
\bibitem{GHMR} {\sc M.B. Giles, M. Hefter, L. Mayer, and K. Ritter},
  {\em Random bit quadrature and approximation of distributions on
    Hilbert spaces}, Found. Comput. Math., 19 (2019), 205--238. 
\bibitem{G18} M.B.\ Giles, M.\ Hefter, L.\ Mayer, and K.\ Ritter,
  {\em Random bit multilevel algorithms for stochastic differential equations}, to appear in J.\ Complexity (2019).
\bibitem{Glass} {\sc P. Glasserman}, {\em Monte Carlo methods in
    financial engineering}, Applications of Mathematics 53, Stochastic
  Modelling and Applied Probability, Springer, 2004.
\bibitem{GL} {\sc S. Graf and H. Luschgy}, {\em Foundations of
    Quantization for Probability Distributions}, Lecture Notes in
  Mathematics, Vol. 1730, Springer, 2000.
\bibitem{GL1} {\sc S. Graf and H. Luschgy}, {\em The quantization dimension of self-similar probabilities}, Math. Nachr., 241 (2002), 103--109.
\bibitem{GLP}{\sc S. Graf, H. Luschgy, and G. Pag\`{e}s}, {\em The local quantization behavior of absolutely continuous probabilities}, Ann. Probab., 40 (2012), 1795--1828.
\bibitem{KZ} {\sc  M. Kesseb\"{o}hmer and S. Zhu}, {\em Some recent developments in quantization of fractal measures}, In: Bandt C., Falconer K., Zähle M. (eds) Fractal Geometry and Stochastics V. Prog. Probab., 70 (2015), 105--120.
\bibitem{K} {\sc B. Kloeckner}, {\em Approximation by finitely supported measures}, ESAIM: COCV, 18 (2012), 343--359.
\bibitem{Kr0} {\sc W. Kreitmeier}, {\em Optimal quantization for dyadic homogeneous Cantor distributions}, Math. Nachr., 281 (2008), 1307--1327.
\bibitem{Kr} {\sc W. Kreitmeier}, {\em Asymptotic optimality of scalar Gersho quantizers}, Constr. Approx., 38 (2013), 365--396.
\bibitem{L} {\sc T. Linder}, {\em On asymptotically companding
    quantization}, Probl. Control. Inform., 20 (1991), 475--484.
\bibitem{MJ} {\sc S. Mak and V.R. Joseph}, {\em Support points},
  Ann. Statist., 46 (2018), 2562--2592.
\bibitem{P} {\sc G. Pag\`{e}s}, {\em A space quantization method for numerical integration}, J. Comput.
Appl. Math., 89 (1997), 1--38.
\bibitem{PS} {\sc G. Pag\`{e}s and A. Sagna}, {\em Asymptotics of the
    maximal radius of an $L^r$-optimal sequence of quantizers},
  Bernoulli, 18 (2012), 360--389.
\bibitem{PPP} {\sc G.\ Pag\`es, H.\ Pham, and J.\ Printems}, {\em Optimal quantization methods and applications to numerical
              problems in finance}, pp.\ 253--297 in: Handbook of
            computational and numerical methods in finance, Birkh\"auser, 2004.
\bibitem{PB} {\sc K. P\"{o}tzelberger}, {\em The quantization dimension of distributions}, Math. Proc. Camb. Phil. Soc., 131 (2001), 507--519.
\bibitem{PB1} {\sc K. P\"{o}tzelberger}, {\em The quantization error of self-similar distributions}, Math. Proc. Camb. Phil. Soc., 137 (2004), 725--740.
\bibitem{PF} {\sc K. P\"{o}tzelberger and K. Felsenstein}, {\em An asymptotic result on principal points for univariate distributions}, Optimization, 28 (1994), 397--406.
\bibitem{RR} {\sc S.T. Rachev and L. R\"{u}schendorf}, {\em Mass Transportation Problems}, Vol. 1
and Vol. 2. Springer, New York, 1998.
\end{thebibliography}
\end{document}